\numberwithin{equation}{section}
\theoremstyle{definition}
\newtheorem{definition}{Definition}[section]
\newtheorem{example}[definition]{Example}
\theoremstyle{remark}
\newtheorem{remark}[definition]{Remark}
\theoremstyle{plain}
\newtheorem{theorem}[definition]{Theorem}
\newtheorem{lemma}[definition]{Lemma}
\newtheorem{proposition}[definition]{Proposition}
\newtheorem{result}[definition]{Result}
\newtheorem{corollary}[definition]{Corollary}
\newcommand{\eps}{\varepsilon}
\newcommand{\al}{\alpha}
\newcommand{\smoo}{\mathcal{C}}
\newcommand{\hol}{\mathcal{O}}
\newcommand{\poly}{\mathscr{P}}
\newcommand{\rl}{{\sf Re}}
\newcommand{\imag}{{\sf Im}}
\newcommand{\mapp}{\longmapsto}
\newcommand\ba[1]{\overline{#1}}
\newcommand{\cplx}{\mathbb{C}}
\newcommand{\Gr}{{\sf Gr}}
\begin{document}

	\title[Uniform Approximation on polynomial polyhedra]{Uniform approximation on certain polynomial polyhedra in  $\mathbb{C}^2$}
	\author{Sushil Gorai and Golam Mostafa Mondal}
	\address{Department of Mathematics and Statistics, Indian Institute of Science Education and Research Kolkata,
		Mohanpur -- 741 246}
	\email{sushil.gorai@iiserkol.ac.in}
	\address{Department of Mathematics, Indian Institute of Science, Bangalore-- 560012, India}
	\email{golammostafaa@gmail.com, golammondal@iisc.ac.in}
	\thanks{}
	\keywords{polynomial approximation; polynomial convexity, polynomial hull, pluriharmonic functions}
	\subjclass[2020]{Primary: 32E30, 32E20 Secondary: 46J10, 32A65}
	
	\begin{abstract}
		In this paper we extend the dichotomy given by Samuelsson and Wold that can be thought of as an analogue of the Wermer maximality theorem in $\mathbb{C}^2$ for certain polynomial polyhedra. We consider complex non-degenerate simply connected polynomial polyhedra of the form $\Omega:=\{z\in\mathbb{C}^2: |p_1(z)|<1, |p_2(z)|<1\}$ such that $\overline{\Omega}$ is compact. Under a mild condition of the polynomials $p_1$ and $p_2$, we prove that either the uniform algebra, generated by polynomials and some continuous functions $f_1,\dots, f_N$ on the distinguished boundary that extends as pluriharmonic functions on $\Omega$, is all continuous functions on the distinguished boundary or there exists an algebraic variety in $\overline{\Omega}$ on which each $f_j$ is holomorphic. We also compute the polynomial hull of the graph of pluriharmonic functions in some cases where the pluriharmonic functions are conjugates of holomorphic polynomials. We also give a couple of general theorem about uniform approximation on the domains with low boundary regularity. 
	\end{abstract}

	\maketitle
	\begin{center}		
		\date{}
	\end{center}
	

	\section{Introduction and statements of the results}\label{S:intro}
	
	Let $\mathcal{C}(K)$ represent the collection of all continuous complex-valued functions defined on a compact set $K$ in $\mathbb{C}^n$ with the norm $\|f\|_{K} = \sup_{x \in K} |f(x)|.$ For given complex-valued continuous functions $f_1, \ldots, f_N$ on $K$, we use the notation $[z_1,z_2,\cdots,z_n,f_1, \ldots, f_N;K]$ to represent the norm-closed subalgebra of $\smoo(K)$ that they generate. In $\cplx$, Wermer's maximality theorem gives a very interesting dichotomy: {\em A uniform algebra on the unit circle generated by holomorphic polynomials and some continuous functions $f_1,\dots, f_N$ on $\mathbb{T}$ is either same as the uniform algebra generated by holomorphic polynomials restricted to the unit circle or the class of all continuous functions on the unit circle.} That says either the algebra is of all continuous function or each of the function $f_j$ is holomorphic. Note that, by solving Dirichlet problem, any continuous function on the unit circle is boundary value of a harmonic function on the unit disc. In higher dimension, any continuous function on the Shilov boundary does not always extends to pluriharmonic functions. 
	Pluriharmonic functions have holomorphic conjugates. Therefore, it is natural to consider functions that are boundary values of pluriharmonic functions to extend Wermer’s maximality theorem to higher dimensions. The study of uniform algebras generated by holomorphic and pluriharmonic functions in higher dimensions has been conducted by \v{C}irka \cite{Cirka69}, Izzo \cite{AIJAMS93,AIJ95}, Samuelsson and Wold \cite{SaW12}, and Izzo, Samuelsson, and Wold \cite{ISW16}.
	In a seminal paper \cite{SaW12}, Samuelsson and Wold proved the following result which extends Wermer's maximality theorem for the case of bidisc in $\cplx^2$.

	\begin{result}[Samuelsson-Wold]\label{R:SW_T2}
		Let $f_{j}\in \smoo(\mathbb{T}^2)$ for $j=1,\cdots,N, N\ge 1$ and assume that each $f_{j}$ extends to a pluriharmonic function on $\mathbb{D}^{2}.$ Then either $[z_1,z_2,f_1,\cdots,f_{N};\mathbb{T}^2]=\smoo(\mathbb{T}^2),$ or there exists a non-trivial algebraic variety $Z\subset \cplx^2$ with $Z\cap b\mathbb{D}^{2}\subset \mathbb{T}^2,$ and the pluriharmonic extensions of the $f_{j}$'s are holomorphic on $Z.$	
	\end{result}
	\Cref{R:SW_T2} uses the properties of the bidisc crucially. One can ask the question: {\em Does the same dichotomy hold when one considers other domains?} This does not have immediate answer. For some images of the bidisc under proper polynomial maps, symmetrized bidisc being one of them, the question is answered positively in \cite{SGGM23}.
	
	In this paper, we prove a similar result for certain polynomial polyhedra. Let $p_1, p_2, \cdots, p_{l}$ be holomorphic polynomials in $z_1, z_2, \ldots, z_n$. Consider the set $\mathfrak{D}_{l}$ known as the \textit{polynomial polyhedron}  defined by
	\begin{align*}
		\mathfrak{D}_{l} := \{z \in \mathbb{C}^n : |p_{1}(z)| < 1, \ldots, |p_{l}(z)| < 1\},
	\end{align*}
	where $l \geq n$. Define $\Sigma_{j} := \{z \in \mathbb{C}^n : |p_{j}(z)| = 1\}$ and $\Sigma_{J_k} := \Sigma_{j_1} \cap \ldots \cap \Sigma_{j_k}$, where $J_k = \{j_1, \ldots, j_k\}$ with $1 \leq j_1 < j_2 < \ldots < j_k \leq l$. Clearly, the topological boundary $b\mathfrak{D}_{l}$ of $\mathfrak{D}_{l}$ is contained in $ \bigcup_{j}\Sigma_{j}$, and following the definition from \cite{CDHV85}, we say $\mathfrak{D}_{l}$ is \textit{complex non-degenerate} if
	\begin{align*}
		dp_{j_1} \wedge \ldots \wedge dp_{j_k}(z) \neq 0 \text{ on } \Sigma_{J_k}, \quad 1 \leq k \leq n.
	\end{align*}
	
	\noindent If $\mathfrak{D}_{l}$ is a complex non-degenerate analytic polyhedron, then the Shilov boundary coincides with the \textit{distinguished boundary} $\Gamma_{\mathfrak{D}_{l}}$ of $\mathfrak{D}_{l}$ and is equal to $\Gamma_{\mathfrak{D}_{l}} = \bigcup' \Sigma_{J_n}$, where $\bigcup'$ represents the union over all monotone multi-indices $(j_1, \cdots, j_n)$. In particular, when the number of polynomials is $n$, then $\Gamma_{\mathfrak{D}_{n}} = \{z \in \mathbb{C}^{n} : |p_{j}(z)| = 1,~j = 1, \ldots, n\}$ (see \cite{CDHV85}, \cite{Hof60}).
	For function algebra on analytic polyhedron and Shilov boundary for uniform algebras on analytic polyhedron see \cite{CDHV85} (see also \cite{Hof60, HofRos62}), and for details on uniform algebras on arbitrary domain and Shilov boundary see \cite{Gam69}.
	
	\medskip

	\begin{theorem}\label{T:PPolyhrn_DistingBdry}
		Let $\mathfrak{D}_{2}$ be complex non-degenerate simply connected polynomial polyhedron given by $\overline{\mathfrak{D}}_{2}:=\{z\in\cplx^2: |p_1(z)|\le 1,|p_{2}(z)|\le 1\}.$ Assume that each leaf $\{z\in\cplx^2:\; p_1(z)=c\},$ $\{z\in\cplx^2:\;p_2(z)=c\}$ is simply connected for $|c|=1.$ Let $f_{j}\in \smoo(\Gamma_{\mathfrak{D}_{2}})$ for $j=1,\cdots,N, N\ge 1,$ and assume that each $f_{j}$ extends to a pluriharmonic function on $\mathfrak{D}_{2}.$ Then either
		\[
		[z_1,z_2,f_1,\cdots,f_{N};\Gamma_{\mathfrak{D}_{2}}]=\smoo(\Gamma_{\mathfrak{D}_{2}})
		\]
		\noindent or there exists a non-trivial algebraic variety $V\subset \cplx^2$ with $V\cap b\mathfrak{D}_{2}\subset \Gamma_{\mathfrak{D}_{2}},$ and the pluriharmonic extension of each of the functions $f_{j}$ is holomorphic on $V.$
	\end{theorem}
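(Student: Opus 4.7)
The plan is to adapt the proof strategy of Samuelsson--Wold's Result~\ref{R:SW_T2} to the polyhedral setting by slicing $\overline{\mathfrak{D}}_2$ along the two foliations $\{p_i = c\}$ for $i=1,2$, applying a one-variable Wermer-type dichotomy on each slice, and then globalizing.

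For each $c\in\mathbb{T}$, the leaf $L := \{z\in\cplx^2 : p_1(z) = c\}$ is by hypothesis simply connected; being a smooth affine algebraic curve (smoothness following from non-degeneracy of $dp_1 \wedge dp_2$ on $\Sigma_1$), the Abhyankar--Moh--Suzuki theorem gives a polynomial parametrization $\psi \colon \cplx \xrightarrow{\sim} L$. Under $\psi$ the slice $L\cap\overline{\mathfrak{D}}_2$ becomes $q^{-1}(\overline{\mathbb{D}})$, where $q := p_2 \circ \psi \in \cplx[w]$, and the non-degeneracy on $\Gamma_{\mathfrak{D}_2}$ forces $q$ to have no critical point on $q^{-1}(\mathbb{T})$. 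Consequently $q^{-1}(\overline{\mathbb{D}})$ is a disjoint union of closed topological disks with analytic boundary, and the restriction of each $f_j$ to this slice pulls back to a harmonic function on each component with continuous boundary values. After transferring each component biholomorphically to $\overline{\mathbb{D}}$ via $q$, Wermer's maximality theorem on the unit circle yields the slice-wise dichotomy: either the restricted algebra $[z_1,z_2,f_1,\dots,f_N;\,L\cap\Gamma_{\mathfrak{D}_2}]$ exhausts $\smoo(L\cap\Gamma_{\mathfrak{D}_2})$, or each $f_j$'s pluriharmonic extension is holomorphic on the entire slice $L\cap\mathfrak{D}_2$.

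To globalize, let $E\subset\mathbb{T}$ denote the set of $c$ for which the second alternative holds on the leaf $L_c = \{p_1 = c\}$. By continuous dependence of the slice structure on $c$ together with a semicontinuity argument, $E$ is closed; if $E=\mathbb{T}$, each $f_j$ is holomorphic on the entire one-parameter family $\bigcup_{|c|=1} L_c \cap \mathfrak{D}_2$, and analytic continuation across this polynomial foliation combined with the polynomial convexity of $\overline{\mathfrak{D}}_2$ produces an algebraic variety $V = \{z\in\cplx^2 : Q(p_1(z),p_2(z)) = 0\}$ for some $Q\in\cplx[w_1,w_2]$, on which each pluriharmonic extension is holomorphic, with $V\cap b\mathfrak{D}_2 \subset \Gamma_{\mathfrak{D}_2}$ built in. If $E \neq \mathbb{T}$, one runs the parallel argument along the $p_2$-foliation; the two families together, mediated by the non-degenerate intersection $\Sigma_1\cap\Sigma_2 = \Gamma_{\mathfrak{D}_2}$, force the first alternative $[z_1,z_2,f_1,\dots,f_N;\,\Gamma_{\mathfrak{D}_2}] = \smoo(\Gamma_{\mathfrak{D}_2})$.

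The main obstacle is the globalization step. Unlike the bidisc, where the slice $\{z_1 = c\} \cap \overline{\mathbb{D}}{}^2$ is always a single analytic disk, here the slice $q^{-1}(\overline{\mathbb{D}})$ can be a disjoint union of several disks whose components undergo nontrivial monodromy as $c$ varies round $\mathbb{T}$, and this monodromy must be carefully tracked to ensure that the slice-wise dichotomy is uniform across components of a single leaf; moreover, glueing the slice-wise analytic structures into a single algebraic variety requires Oka-type extension combined with the polynomial convexity of $\overline{\mathfrak{D}}_2$. A secondary technical point is verifying that the variety $V$ produced from the holomorphic locus is genuinely algebraic and of the specific form $Q(p_1,p_2) = 0$; this ultimately rests on the fact that $p_1, p_2$ are polynomials and that $V$ respects both foliations, so its defining equation descends to a polynomial in $(p_1, p_2)$.
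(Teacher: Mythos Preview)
Your proposal has a genuine gap at the globalization step, which you yourself flag as ``the main obstacle'' but do not resolve.  The paper's argument is organized very differently: it does not fibre $\Gamma_{\mathfrak{D}_2}$ over $\mathbb{T}$ via $p_1$ and attempt to patch slice-wise Wermer dichotomies.  Instead it reduces the first alternative to the polynomial convexity of the graph $\Gr_h(\Gamma_{\mathfrak{D}_2})$ (\Cref{T: Approx_Cont_Func}), and when that fails it studies where the hull $\widehat{\Gr_h(\Gamma_{\mathfrak{D}_2})}$ can lie inside $\Gr_h(\overline{\mathfrak{D}}_2)$: first over the open boundary faces (\Cref{L:Lemma1}), then over the complex-tangency locus $\widetilde{V}=\{\overline{\partial}h_{j_1}\wedge\overline{\partial}h_{j_2}=0\}$ in the interior (\Cref{L:Lemma2}), and finally, in the degenerate case $\widetilde{V}=\mathfrak{D}_2$, along leaves of the level sets of a holomorphic potential $\Phi_1$ via a delicate exhaustion by the shrunken polyhedra $\mathfrak{D}_2^{r_j}$ and the frames $\mathfrak{Q}_j$ (\Cref{L:Lemma3}--\Cref{L:Lemma6}).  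The resulting analytic variety is then upgraded to an algebraic one by the Tornehave-type \Cref{L:Tornehave_PP}.  At no point does the paper assert that $V$ has the special form $\{Q(p_1,p_2)=0\}$.

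Your argument breaks down in both branches.  In the case $E=\mathbb{T}$ you obtain holomorphicity of each $h_j$ on every boundary leaf $\{p_1=c\}\cap\overline{\mathfrak{D}}_2$, $|c|=1$; but these leaves lie entirely in $b\mathfrak{D}_2$, so you have produced no analytic disc in the \emph{interior} $\mathfrak{D}_2$, and a boundary leaf $\{p_1=c\}$ fails the required condition $V\cap b\mathfrak{D}_2\subset\Gamma_{\mathfrak{D}_2}$.  ``Analytic continuation across this polynomial foliation'' is not a mechanism: holomorphicity on boundary slices does not propagate inward without further input, and in the paper that input is precisely the nontriviality of $\widehat{\Gr_h(\Gamma_{\mathfrak{D}_2})}$.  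In the case $E\ne\mathbb{T}$ the situation is worse: knowing $[z_1,z_2,f_1,\dots,f_N;L_c\cap\Gamma_{\mathfrak{D}_2}]=\smoo(L_c\cap\Gamma_{\mathfrak{D}_2})$ for every $c$ in both foliations does \emph{not} imply the global statement $[z_1,z_2,f_1,\dots,f_N;\Gamma_{\mathfrak{D}_2}]=\smoo(\Gamma_{\mathfrak{D}_2})$.  Fibrewise density of a subalgebra does not give density on the total space; what is needed is that $\overline{p_1}$ and $\overline{p_2}$ lie in the algebra, and that is exactly what \Cref{T: Approx_Cont_Func} supplies once $\Gr_h(\Gamma_{\mathfrak{D}_2})$ is shown to be polynomially convex.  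Your ``semicontinuity'' claim that $E$ is closed is also unsubstantiated: the condition ``all $h_j$ are holomorphic on the slice'' is an intersection of closed conditions in the slice variable, but the slices themselves and their component structure vary with $c$, and you have not controlled this.

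In short, the slice-wise Wermer step is a genuine ingredient (the paper uses it locally inside \Cref{L:Lemma1}, \Cref{L:Lemma2} and \Cref{L:Lemma3}), but it cannot serve as the global organizing principle.  The missing idea is to work with the polynomial hull of the graph rather than with the distinguished boundary directly.
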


	\begin{remark}
		If we take $p_{1}(z)=z_1$ and $p_2(z)=z_2$ we get back Samuelsson-Wold theorem (\Cref{R:SW_T2}).
	\end{remark}
	\begin{remark}
		Since each leaf is a simply connected Riemann surface, therefore, the number of components of each of the sets $\{z\in\cplx^2:\;p_1(z)=c\}$ and $\{z\in\cplx:\;p_2(z)=c\}$ is one for all $|c|=1.$
	\end{remark}
	
	\smallskip

	We also provide a description of the polynomial hull of certain graphs over the distinguished boundary of some polynomial polyhedra.
	In general, finding the polynomial hull is extremely difficult. Here we consider a polynomial polyhedron $\mathfrak{D}_2=\{z\in\cplx^2:\; |p_1(z)|\leq 1, |p_2(z)|\leq 1\}$ such that $\Psi:\cplx^2\to\cplx^2$ defined by 
	\[
	\Psi(z):=(p_1(z), p_2(z))
	\]
	is a proper map. Let $P$ be a holomorphic polynomial. We will compute the polynomially convex hull of the graph of $\ba{P\circ\Psi}$ over the distinguished boundary of the polynomial polyhedron $\mathfrak{D}_2$ (see \Cref{sec-polyhull} for details). Jimbo \cite{Jimbo03,Jimbo05} gave the description of polynomial hulls of graphs of anti-holomorphic polynomials on the torus. Here, we treat the case when $f$ is the restriction of an anti-holomorphic polynomial in $\cplx^2$ to the distinguished boundary $\Gamma_{{\mathfrak{D}_{2}}}$ of a complex non-degenerate polynomial polyhedron $\mathfrak{D}_{2}$ in $\cplx^2.$ We present explicit descriptions of the hull of the graph of $f$ on $\Gamma_{{\mathfrak{D}_{2}}}.$ In this setting, we show that the set $\widehat{\Gr_{f}(\Gamma_{{\mathfrak{D}_{2}}})}\setminus \Gr_{f}(\Gamma_{{\mathfrak{D}_{2}}})$ is contained in an algebraic variety.
	
	\smallskip
	
	We now look at the graph over certain bounded domains whose closure is polynomially convex.  Specifically, we look at domains with polynomially convex closure and having segment property.
	
	\begin{definition}
		Let $\Omega$ be a bounded domain in $\cplx^n.$ $\Omega$ has \textit{segment property} if for every $\xi\in b\Omega,$ there exists a neighborhood $N_{\xi}$ of $\xi$ and a vector $V_{\xi}$ in $\cplx^n$ such that 
		\begin{align*}
			z+tV_{\xi}\in\Omega, \forall z\in\overline{\Omega}\cap N_{\xi}, 0<t<1.
		\end{align*}	
	\end{definition}
	
	\noindent In \cite{AveHedPer16} authors showed that a domain $\Omega$ has the segment property if, and only if, $\Omega$ has $\smoo^{0}$ boundary. For more details on the segment property we refer the reader to \cite{DeZo01}. We now present a couple of observations that will generalize results from (\cite[Theorem 1.3]{SaW12}) and  \cite[Theorem 1.1]{ISW16} respectively:

	\begin{theorem}\label{T:Non_smooth}
		Let
		$\Omega\subset \cplx^{n}$ be a bounded domain with $\widehat{\overline{{\Omega}}}=\overline{{\Omega}}$ and $\Omega$ possesses segment property except possibly at a countable set of boundary points. Let $h_{j}$'s be pluriharmonic on $\Omega$ and continuous on $\overline{\Omega}$ for $j=1,\cdots,N,$ and assume that $[z_1,\cdots,z_{n},h_1,\cdots,h_N;\partial\Omega]=\smoo(\partial\Omega).$ Then either there exists an analytic disc in $\Omega$ where all $h_{j}$'s are holomorphic or $[z_1,\cdots,z_{n},h_1,\cdots,h_N;\overline{\Omega}]=\smoo(\overline{\Omega}).$ 
	\end{theorem}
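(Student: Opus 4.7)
I would argue by contradiction: suppose
\[
A := [z_1,\ldots,z_n,h_1,\ldots,h_N;\overline{\Omega}] \neq \smoo(\overline{\Omega}),
\]
and produce an analytic disc in $\Omega$ on which every $h_j$ is holomorphic. The strategy adapts Samuelsson--Wold (\cite{SaW12}, Theorem~1.3) and Izzo--Samuelsson--Wold (\cite{ISW16}, Theorem~1.1) to the weaker boundary regularity supplied by the segment property, which is used in place of $\smoo^{1}$--smoothness of $\partial\Omega$.

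First I would localise an annihilator. By the Hahn--Banach theorem, choose a non-zero regular complex Borel measure $\mu$ on $\overline{\Omega}$ with $\mu\perp A$. Since $\mu|_{\partial\Omega}$ annihilates $A|_{\partial\Omega} = \smoo(\partial\Omega)$ by hypothesis, we have $\mu|_{\partial\Omega}=0$, hence $\mathrm{supp}(\mu)\subset\Omega$. The polynomial convexity $\widehat{\overline{\Omega}}=\overline{\Omega}$ together with the continuity of the $h_j$ on $\overline{\Omega}$ lets one identify the maximal ideal space of $A$ with the compact set $X := \Phi(\overline{\Omega})$ in $\cplx^{n+N}$, where $\Phi(z) = (z,h_1(z),\ldots,h_N(z))$; the pushforward $\Phi_{*}\mu$ is then a non-zero measure orthogonal to $\poly(X)$ and supported in $\Phi(\Omega)$.

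Next I would extract analytic structure. A non-zero measure orthogonal to $\poly(X)$ and supported away from the Shilov boundary forces a non-trivial Gleason part for $A$ at some interior point $p\in\Omega$, and Bishop's analytic disc theorem (see \cite[Ch.~VI]{Gam69}) then yields a non-constant holomorphic map $\varphi:\dsc\to M_A$ through $\Phi(p)$ along which every element of $A$ becomes holomorphic in the disc parameter. Composing with the projection onto the first $n$ coordinates gives a non-constant holomorphic map $\psi:\dsc\to\overline{\Omega}$. The segment property is the key input that forces $\psi(\dsc)$ into $\Omega$: for each $\xi\in\partial\Omega$ enjoying the segment property, translating along $V_{\xi}$ and approximating by polynomial--pluriharmonic combinations produces a peak function for $A$ at $\xi$ on all of $\overline{\Omega}$, so every such $\xi$ is a peak point of $A$ and must be avoided by $\psi(\dsc)$; the countable exceptional set is harmless because $\psi^{-1}$ of a countable set is countable, so by restricting to a sub-disc one obtains $\psi(\dsc)\subset\Omega$. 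Finally, since each $h_j\in A$, the composition $h_j\circ\psi$ is holomorphic in $\zeta$, and combined with the pluriharmonicity of $h_j$ on $\Omega$ this forces $h_j$ to be holomorphic on the immersed analytic disc $\psi(\dsc)\subset\Omega$.

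The principal obstacle is the middle step: converting the abstract non-trivial Gleason part in $M_A$ into a concrete analytic disc sitting inside $\Omega$. In the smooth setting of \cite{SaW12,ISW16} the outward normal provides an explicit peak function at each boundary point; here only the directional vector $V_{\xi}$ supplied by the segment property is available, and one must tolerate a countable bad set. I would handle this by verifying that the local peak function constructions in \cite{SaW12,ISW16} still go through using only segment-property translates together with polynomial approximation, and then by a dimension-counting argument on $\psi(\dsc)$ dispose of the countable exceptional set.
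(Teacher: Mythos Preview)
Your proposal misidentifies where the segment property enters, and this creates a genuine gap. The decisive step is to show that the graph $X=\Gr_h(\overline{\Omega})\subset\cplx^{n+N}$ is polynomially convex; you assert this implicitly when you write that ``the polynomial convexity $\widehat{\overline{\Omega}}=\overline{\Omega}$ together with the continuity of the $h_j$ \ldots\ lets one identify the maximal ideal space of $A$ with $X$'', but polynomial convexity of $\overline{\Omega}$ alone does \emph{not} force $\widehat{X}=X$. In the paper this is exactly the content of \Cref{L:Poly_Cnvx}, and the segment property is used there---and only there---via the PSH--Mergelyan property (\Cref{R:PshAprx}): one approximates $\rl h_j\in{\sf psh}(\Omega)\cap\smoo(\overline{\Omega})$ uniformly by functions in ${\sf psh}(\overline{\Omega})$, which together with \Cref{R:hormander} separates $\Gr_h(\overline{\Omega})$ from any exterior point. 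Once $\Gr_h(\overline{\Omega})$ is polynomially convex, the paper simply observes that the remaining argument of \cite[Theorem~1.3]{SaW12} goes through verbatim (totally real stratification of the graph and \Cref{R:Stratificatn_Apprx}); no peak-point construction and no Gleason-part analysis is needed.

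Your alternative route---using the segment vector $V_\xi$ to build a peak function for $A$ at $\xi\in\partial\Omega$---is not substantiated: the segment property gives an \emph{inward} translation on a one-sided neighbourhood, not a barrier or supporting function, and it is unclear how ``polynomial--pluriharmonic combinations'' would yield a global peak in $A$. Moreover, the appeal to ``Bishop's analytic disc theorem'' to pass from a non-trivial Gleason part to an honest analytic disc is not valid in general uniform algebras (there are well-known counterexamples with non-trivial parts but no analytic structure), so even granting the peak-point step your argument would not close. The paper avoids all of this: the segment property feeds only into the plurisubharmonic approximation needed for \Cref{L:Poly_Cnvx}, after which the geometric machinery of \cite{SaW12} applies without modification.
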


	\begin{theorem}\label{T:WMT_NonSmooth}
		Let
		$\Omega\subset \cplx^{n}$ be a bounded domain with $\widehat{\overline{{\Omega}}}=\overline{{\Omega}}$ and $\Omega$ possesses segment property except possibly at a countable set of boundary points. Let $h_{j}$'s be pluriharmonic on $\Omega$ and continuous on $\overline{\Omega}$ for $j=1,\cdots,N.$ Then the following are equivalent:
		\begin{enumerate}
			\item $G_h(\partial\Omega)$ is polynomially convex.
			\item $\widehat{G_h(\partial\Omega)}\setminus G_h(\partial\Omega)$ contains no analytic disc.
			\item There does not exist a nontrivial analytic disc $\triangle\to \Omega$ on which all the $h_{j}$'s are holomorphic.
			\item $[z,h;\overline{\Omega}]=\left\{f\in \smoo(\overline{\Omega}):f|_{\partial\Omega}\in [z,h;\partial\Omega]\right\}.$
		\end{enumerate}
	\end{theorem}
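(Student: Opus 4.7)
The plan is to establish the equivalences via the cycle $(1)\Rightarrow(2)\Rightarrow(3)\Rightarrow(4)\Rightarrow(1)$. The first implication is immediate, since under (1) the complement $\widehat{G_h(\partial\Omega)}\setminus G_h(\partial\Omega)$ is empty. For $(2)\Rightarrow(3)$, I argue by contraposition: if $\phi\colon\Delta\to\Omega$ is a nontrivial analytic disc on which each $h_j$ is holomorphic, then reality of $h_j$ forces $h_j\circ\phi$ to be a real holomorphic function on $\Delta$ and hence constant, so $h\circ\phi\equiv c$ for some $c\in\mathbb{R}^N$. Using $\widehat{\overline{\Omega}}=\overline{\Omega}$ and the segment property, a normal-family / extremal-disc argument extends $\phi$ to an analytic disc $\psi\colon\overline{\Delta}\to\overline{\Omega}$ with $\psi(\Delta)\subseteq\Omega$, $h\circ\psi\equiv c$, and $\psi(\partial\Delta)\subseteq\partial\Omega$. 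For any polynomial $P$, the map $\zeta\mapsto P(\psi(\zeta),c)$ is holomorphic on $\Delta$, so the maximum modulus principle together with $h(\psi(\zeta'))=c$ on $\partial\Delta$ yields
\[
|P(\psi(\zeta),c)|\le\sup_{\zeta'\in\partial\Delta}|P(\psi(\zeta'),h(\psi(\zeta')))|\le\sup_{G_h(\partial\Omega)}|P|,
\]
placing the nontrivial analytic disc $\{(\psi(\zeta),c):\zeta\in\Delta\}$ in $\widehat{G_h(\partial\Omega)}\setminus G_h(\partial\Omega)$ and contradicting (2).

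For $(3)\Rightarrow(4)$, the inclusion $\subseteq$ is trivial. For the reverse, set $A:=[z_1,\dots,z_n,h_1,\dots,h_N;\overline{\Omega}]$ and apply Bishop's antisymmetric decomposition. Since the $h_j$'s are real-valued functions in $A$, every maximal antisymmetric set $E$ lies in a common level set of $h$; on such an $E$, the restriction $A|_E$ coincides with the uniform closure on $E$ of polynomials in $z$. By (3), $E\cap\Omega$ contains no nontrivial analytic disc, so a Rossi local maximum modulus / \v{C}irka-type argument (using $\widehat{\overline{\Omega}}=\overline{\Omega}$) rules out analytic structure in $E\cap\Omega$ and reduces $A|_E$ to $\mathcal{C}(E)$ on the interior part; analytic structure in $E\cap\partial\Omega$ is accounted for by the hypothesis $f|_{\partial\Omega}\in[z,h;\partial\Omega]$, which forces $f|_E$ to satisfy exactly the analytic constraints imposed on $A|_E$ by that boundary piece. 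Bishop's theorem then places $f\in A$. The segment property enters as the principal approximation tool: near each good boundary point $\xi$ the translates $z\mapsto f(z+tV_\xi)$ extend continuously past $\overline{\Omega}$ and uniformly approximate $f$ on $\overline{\Omega}$, so smooth-boundary techniques can be applied locally and then passed to the limit; the countable exceptional set is absorbed because $A$ is closed and because such sets carry no mass for the representing measures appearing in Bishop's proof.

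For $(4)\Rightarrow(1)$, suppose $(z_0,w_0)\in\widehat{G_h(\partial\Omega)}\setminus G_h(\partial\Omega)$. Testing the hull inequality with polynomials depending only on $z$ gives $z_0\in\widehat{\partial\Omega}=\overline{\Omega}$, while testing with polynomials in $w$ and using that $h(\partial\Omega)\subset\mathbb{R}^N$ is polynomially convex (by Stone--Weierstrass on $\mathbb{R}^N$) gives $w_0\in h(\partial\Omega)$. On the other hand, (4) implies that every $f\in\mathcal{C}(\overline{\Omega})$ vanishing on $\partial\Omega$ lies in $A$; choosing such an $f$ with $f(z_0')=1$ for any $z_0'\in\Omega$ and writing $f=\lim p_n(z,h(z))$ gives $\sup_{\partial\Omega}|p_n(z,h(z))|\to 0$ while $p_n(z_0',h(z_0'))\to 1$, so the hull inequality forces $(z_0',h(z_0'))\notin\widehat{G_h(\partial\Omega)}$ for all $z_0'\in\Omega$. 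Hence $z_0\notin\Omega$, so $z_0\in\partial\Omega$. A peak-function argument at $z_0$ using the segment direction $V_{z_0}$ together with $\widehat{\overline{\Omega}}=\overline{\Omega}$ then produces polynomials $q_k$ with $q_k(z_0)=1$ whose suprema over $\partial\Omega\setminus\{z_0\}$ tend to $0$; applying the hull inequality to $q_k(z)(w_j-h_j(z_0))$ forces $w_{0,j}=h_j(z_0)$ for each $j$, so $(z_0,w_0)\in G_h(\partial\Omega)$, contradicting our assumption. The countable exceptional set of non-segment boundary points is handled by a limiting argument using the closedness of the hull and the continuity of $h$.

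\textbf{Main obstacle.} The central difficulty is step $(3)\Rightarrow(4)$ in the low-regularity setting. In the $\mathcal{C}^1$ case the standard argument uses Poisson/pluriharmonic extension and $\bar\partial$-techniques near $\partial\Omega$, neither of which is available under only the segment property with a countable exceptional set. The segment property is precisely the substitute that permits translation-based approximation of continuous functions on $\overline{\Omega}$ by ones defined past the boundary; combining this with Bishop's decomposition and the Rossi/\v{C}irka-type local analysis, while controlling the exceptional boundary points, is the technical heart of the proof. The auxiliary ingredients--the extremal-disc extension in $(2)\Rightarrow(3)$ and the peak-function rigidity in $(4)\Rightarrow(1)$--also rest on the segment property, and must be verified in the low-regularity regime.
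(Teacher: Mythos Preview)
Your proposal has a structural error that undermines two of the four implications: you repeatedly assume the $h_j$ are real-valued. In $(2)\Rightarrow(3)$ you write ``reality of $h_j$ forces $h_j\circ\phi$ to be a real holomorphic function on $\Delta$ and hence constant,'' and in $(3)\Rightarrow(4)$ your Bishop antisymmetric decomposition rests on ``since the $h_j$'s are real-valued functions in $A$, every maximal antisymmetric set $E$ lies in a common level set of $h$.'' But the hypothesis is only $h_j\in PH(\Omega)\cap\mathcal{C}(\overline{\Omega})$; these are complex-valued pluriharmonic functions (cf.\ Lemma~\ref{L:Plurihrmonic_Cnjgte}, where a general pluriharmonic $h$ is written as $\Psi+\mathsf{Re}\,\Phi$ with $\Psi,\Phi$ holomorphic). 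So $h_j\circ\phi$ need not be constant on a disc where it is holomorphic, and the level sets of $h$ need not be antisymmetric sets for $A$. Both arguments collapse.

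Even setting that aside, several steps are not arguments but names: the ``normal-family / extremal-disc argument'' producing a disc with $\psi(\partial\Delta)\subset\partial\Omega$ is not supplied and is not a standard consequence of the segment property; the ``peak-function argument at $z_0$'' in $(4)\Rightarrow(1)$ presumes polynomials peaking at an arbitrary boundary point of a merely polynomially convex $\overline{\Omega}$, which is false in general.

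The paper's route is far more economical and avoids all of this. The only place $\mathcal{C}^1$-smoothness enters the Izzo--Samuelsson--Wold proof of the analogous theorem is in showing that $\mathrm{Gr}_h(\overline{\Omega})$ is polynomially convex. The paper replaces that single step by Lemma~\ref{L:Poly_Cnvx}: under the segment property (away from countably many points), $\Omega$ has the PSH-Mergelyan property (Result~\ref{R:PshAprx}), so $\mathsf{Re}\,h_l$ and $-\mathsf{Re}\,h_l$ can be uniformly approximated on $\overline{\Omega}$ by functions plurisubharmonic past the boundary, and H\"ormander's characterization (Result~\ref{R:hormander}) then gives $\widehat{\mathrm{Gr}_h(\overline{\Omega})}=\mathrm{Gr}_h(\overline{\Omega})$. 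With that in hand, the equivalences $(1)$--$(4)$ follow verbatim from \cite[Theorem~1.1]{ISW16}. You should reorganize your proof around this one lemma rather than attempting the full cycle from scratch.
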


	\noindent We observe that, in (\cite[Theorem 1.1]{ISW16}) and (\cite[Theorem 1.3]{SaW12}), the $\smoo^1$-smoothness of $\Omega$ is required to prove  and  only to show that $\Gr_{h}(\overline{\Omega})$ is polynomially convex. By \Cref{L:Poly_Cnvx}, we can say that $\Gr_{h}(\overline{\Omega})$ is polynomially convex for any bounded domain $\Omega$ with polynomially convex closure and having segment property except possibly at a countable set of boundary points. The rest follows from \cite[Theorem 1.3]{SaW12} and \cite[Theorem 1.1]{ISW16}, respectively.
\smallskip

 \noindent Although the proof of \Cref{T:Non_smooth} and \Cref{T:WMT_NonSmooth} are not very different from that of \cite{ISW16} except for the polynomial convexity, but there are huge implication of \Cref{T:Non_smooth} and \Cref{T:WMT_NonSmooth} as we can deal with a huge class of nonsmooth domains here. A domain having continuous boundary satisfies the segment property. Although whether the closure of a bounded domain is polynomially convex is a very difficult problem, the class of bounded convex domains satisfies the segment property and their closure is also polynomially convex. Hence, we obtain the following corollary: 

 \begin{corollary}
     Let $\Omega$ be a bounded convex domain in $\cplx^n$. Let $h_j$, $j=1,\dots, N$, be pluriharmonic in $\Omega$ and continuous on $\overline{\Omega}$. Assume further that there does not exist a nontrivial analytic disc $\triangle\subset\Omega$ on which all the $h_{j}$'s are holomorphic.
     Then 
     \begin{enumerate}
			\item $G_h(\partial\Omega)$ is polynomially convex.
			\item $\widehat{G_h(\partial\Omega)}\setminus G_h(\partial\Omega)$ contains no analytic disc.
			\item $[z,h;\overline{\Omega}]=\left\{f\in \smoo(\overline{\Omega}):f|_{\partial\Omega}\in [z,h;\partial\Omega]\right\}.$
		\end{enumerate}
 \end{corollary}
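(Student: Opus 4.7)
The plan is to derive the corollary as a direct consequence of \Cref{T:WMT_NonSmooth}. The hypothesis of the corollary, that no nontrivial analytic disc $\triangle\to\Omega$ carries all $h_j$ holomorphically, is precisely item (3) of \Cref{T:WMT_NonSmooth}, while the three conclusions of the corollary are items (1), (2), and (4) of that theorem. So I only need to verify the two standing hypotheses of \Cref{T:WMT_NonSmooth} for a bounded convex $\Omega\subset\cplx^n$: namely $\widehat{\overline{\Omega}}=\overline{\Omega}$ and the segment property at all but countably many boundary points.

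To obtain $\widehat{\overline{\Omega}}=\overline{\Omega}$, I would invoke the classical fact that every compact convex set in $\cplx^n$ is polynomially convex. Given $a\notin K:=\overline{\Omega}$, the geometric Hahn--Banach theorem furnishes a real affine functional $\ell$ with $\ell(a)>\sup_K\ell$; writing $\ell(z)=\rl(L(z))+c$ with $L$ complex-linear, the entire function $e^{L+c}$ satisfies $|e^{L(a)+c}|>\sup_K|e^{L+c}|$, and uniform approximation on the compact set $K\cup\{a\}$ by Taylor polynomials of $e^{L+c}$ produces a holomorphic polynomial $p$ with $|p(a)|>\|p\|_K$. For the segment property, fix $p_0\in\Omega$ and $r>0$ with $B(p_0,r)\subset\Omega$; for every $\xi\in b\Omega$ take $V_\xi:=p_0-\xi$ and $N_\xi:=B(\xi,r/2)$. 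Then for $z\in\overline{\Omega}\cap N_\xi$ and $0<t<1$,
\[
z+tV_\xi=(1-t)z+t(z-\xi+p_0),
\]
and since $z-\xi+p_0\in B(p_0,r/2)\subset\Omega$, convexity of $\Omega$ places this convex combination in $\Omega$ for every $0<t<1$. Hence the segment property holds at every boundary point of $\Omega$, so the countable exceptional set allowed by \Cref{T:WMT_NonSmooth} is in fact empty.

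Both hypotheses being in place, \Cref{T:WMT_NonSmooth} applies verbatim to the given $\Omega$ and $h_1,\dots,h_N$, and the three conclusions of the corollary follow from the equivalence of items (1), (2), (3), (4) in that theorem. The only mild subtlety in the plan is ensuring, in the segment-property step, that the convex combination of a closure point and an interior point lies in the open set $\Omega$ rather than only in $\overline{\Omega}$; this is the standard geometric fact that in a convex set the half-open segment from a boundary point to an interior point is contained in the interior, which is exactly what the open ball $B(p_0,r/2)$ secures in the argument above.
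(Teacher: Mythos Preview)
Your proposal is correct and follows exactly the approach the paper intends: the paper simply remarks that bounded convex domains have polynomially convex closure and satisfy the segment property, and then invokes \Cref{T:WMT_NonSmooth}. You have supplied the explicit verifications (Hahn--Banach for polynomial convexity, and the convex-combination argument with an interior ball for the segment property at every boundary point) that the paper leaves implicit, but the route is the same.
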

	
	\smallskip
	
	We now provide some remarks: the first two of them concerns about the comparison between \Cref{T:PPolyhrn_DistingBdry} with known results on polynomial polyhedra and other domains in $\cplx^2$; and the other concerns about the possibility of extending \Cref{T:PPolyhrn_DistingBdry} to domains in $\cplx^n$, $n\geq 3$. These comments might be interesting in the context of further research.
	
	\begin{remark}
		The conditions on polynomial polyhedra in \Cref{T:PPolyhrn_DistingBdry} are quite natural. On simply connected domain any complex valued pluriharmonic function can be seen as a sum of a holomorphic function and the real part of another holomorphic function. This is used crucially in our proofs. Complex non-degeneracy of the polyhedra gives us the segment property which is one other essential ingredient in our proof. A natural class of polynomial polyhedra is special polynomial polyhedra in the sense of \cite{Nivoche}. The class of polyhedra that satisfies conditions of \Cref{T:PPolyhrn_DistingBdry} has nontrivial intersection with the class of special polynomial polyhedra. Any polynomially convex set satisfying some conditions (see Corollary~2 and Theorem~3 of \cite{Nivoche}) can be approximated by special polynomial polyhedra. It is not clear whether \Cref{T:PPolyhrn_DistingBdry} holds for special polynomial polyhedra defined in \cite{Nivoche}. Also, there are some approximation results of Mergelyan type on certain polynomial polyhedra (see \cite{Petrosyan2006}, \cite{Petrosyan2007}).
	\end{remark}
	
	\begin{remark}
		When one considers a general domain in $\cplx^2$, it is already difficult to find the Shilov boundary. Therefore, one assumes the boundary approximation as in \Cref{T:Non_smooth}. If Shilov boundary can be found precisely and it is two real dimension and the other parts of the boundary are foliated by analytic discs, then the question of getting a dichotomy as in \Cref{T:PPolyhrn_DistingBdry} make sense and is also interesting. For certain proper holomorphic images of the bidisc such dichotomy were shown in \cite{SGGM23}.
	\end{remark}
	
	\begin{remark}
		In complex dimension three or higher, even for the polydisc,
  things are complicated. Main obstruction comes in giving a stratification of the graph over the boundary. This is the reason for assuming boundary approximation while considering the general domains in \Cref{T:Non_smooth}.
		Here, unlike the case in \Cref{T:PPolyhrn_DistingBdry}, one need to deal with $n-1$ dimensional analytic varieties; some part of our techniques fail even in $\cplx^3$. The case of polynomial polyhedra could be thought of as the first step towards it. 
	\end{remark}

	We now give a brief outline for our proofs of \Cref{T:PPolyhrn_DistingBdry}.
	The philosophy behind the proof of \Cref{T:PPolyhrn_DistingBdry} closely follows that of \Cref{R:SW_T2} although adapting the relevant tools to our case requires some new insights. For example, in the proof of \Cref{T:PPolyhrn_DistingBdry}, the polynomial convexity of $h$ over the polynomial polyhedron is crucial, which we prove by showing that the polynomial polyhedron has segment property. We also gave a version of Tornehave's result for polynomial polyhedra. Firstly, we establish that \Cref{T:PPolyhrn_DistingBdry} holds if and only if the graph of $h$ over the distinguished boundary of the polynomial polyhedron $\mathfrak{D}_2$ is polynomially convex (refer to \Cref{T: Approx_Cont_Func}). Therefore, if the graph lacks polynomial convexity, we examine different scenarios through specific lemmas, demonstrating the existence of an analytic variety within $\mathfrak{D}_2$ where all $h_j$'s are holomorphic. Utilizing Tornehave's result (for polynomial polyhedron), we can find the required algebraic variety.
	\smallskip
	
	A few words about the layout of the paper: The first part of \Cref{S:technical} collects some known results that are required in this paper while in the second part we prove several new results which are crucial for this paper. In \Cref{S:proofMain} we present a proof of \Cref{T:PPolyhrn_DistingBdry}. The polynomial hulls of some graphs are computed in \Cref{sec-polyhull}. \Cref{sec-examples} is devoted to some examples.

	\section{Technical Results}\label{S:technical}
	\noindent We make use of the following result due to Samuelsson-Wold \cite[Corollary 4.2]{SaW12}.
	\begin{result}[Samuelsson-Wold]\label{R:Graph_Tot_Rl}
		Let $M$ be a complex manifold of dimension $n,$ let $h=(h_1,\cdots,h_{N}):M\to\cplx^{N}$ be a $\smoo^1$-smooth map. If $z\in M$ and there are functions $h_{j_1},\cdots,h_{j_n}$ such that $\overline{\partial}h_{j_1}\wedge\cdots\wedge \overline{\partial}h_{j_n}(z)\not=0,$ then the graph $\Gr_{h}(M)$ of $h$ at $(z,h(z))$ is totally real.
	\end{result}

	For $n>1,$ the most general result known for polynomial approximation is the following due to Samuelssonand Wold \cite{SaW12}:

	\begin{result}(Samuelsson-Wold)\label{R:Stratificatn_Apprx}
		Let $X$ be a polynomially convex compact set in $\cplx^{n}$ and assume that there are closed sets $X_{0}\subset X_{1}\subset \cdots\subset X_{N}=X$ such that $X_{j}\setminus X_{j-1},j=1,\cdots,N,$ is a totally real set. If $\smoo(X_0)=\hol(X_0),$ then $\smoo(X)=[z_1,\cdots,z_{n};X].$
	\end{result}

	The following result shows that plurisubharmonic functions can also be used to describe the polynomial convex hull of a compact set.
	
	\begin{result}[H\"{o}rmander, \cite{H}]\label{R:hormander}\index{Plurisubharmonic hull}
	\sloppy Let $G$ be a pseudoconvex open subset of $\cplx^n$ and $X$ be a compact subset $G.$ Then $\widehat{X}_{G} = \widehat{X}_{G}^P $, where $\widehat{X}_{G}^P:=\left\lbrace z \in G: u(z) \leq\sup\nolimits_{X} u \;\forall u \in {\sf psh}(G) \right\rbrace$ and $\widehat{X}_G:=
		\big\lbrace z\in G : |f(z)|\leq \sup\nolimits_{z\in X} |f(z)|\;\forall f\in \hol(G)\big\rbrace$.
	\end{result}
	\noindent In case $G= \cplx^n,$ \Cref{R:hormander} says that the plurisubharmonically convex hull of $X$ is same as the polynomially convex hull of $X.$

	\begin{result}[\cite{Rosay06,Ross60}]\label{R:Rossi}
		Let $E$ be a compact set in $\cplx^{n}.$ Let $a\in \widehat{E}\setminus E$, and let $N$ be a relatively compact neighborhood of $a$ that does not intersect $E$. Then, $a\in \widehat{{\widehat{E}\cap bN}}$ (where $bN$ denotes
		the topological boundary of $N$).
	\end{result}
	
	\begin{result}(\cite[Propostion 4.7]{SaW12})\label{R:Polycnvx_fiber}
		Let $K\subset \cplx^{N}$ be compact, let $F:\cplx^{n} \to\cplx^{m}$ be the uniform limit on $K$ of entire functions, and let $Y=F(K);$ note that $F$ extends to $\widehat{K}.$ If $y\in Y$ is a peak point for the algebra $\poly(Y),$ then $\widehat{K}\cap F^{-1}(y)=\widehat{(K\cap F^{-1}\{y\}}).$
	\end{result}


	Let ${\sf psh}(\Omega)$ denote the class of plurisubharmonic
	functions in $\Omega$ and ${\sf psh}(\overline{\Omega})$ denotes the collection of function $\phi$ such that there exists an open set $U\ \supset \Omega$ with $\phi \in  {\sf psh}(U)$. Sibony \cite[Theorem 2.2]{Sib87} showed that if $\Omega$ is a smooth pseudoconvex domain, then every functions in ${\sf psh}(\Omega)\cap C(\overline{\Omega})$ can be approximated uniformly with functions in ${\sf psh}(\overline{\Omega})\cap C^{\infty}(\overline{\Omega})$ and we say $\Omega$ has the \textit{PSH-Mergelyan property}. Forn\ae ss and Wiegerinck \cite[Theorem 1]{ForWie89}	proved that this is also true for any bounded $\smoo^1$-smooth domain.  Avelin, Hed, and Persson \cite{AHP16} further relaxed the boundary conditions to continuity. Subsequently, in \cite{HPJW17}, the authors presented an improved version of this result.
	
	\begin{result}\label{R:PshAprx}
		Let $\Omega$ be a bounded domain in $\cplx^{N}.$ Suppose that $\Omega$ has segment property except at a countable set of boundary points. Then $\Omega$ has the PSH-Mergelyan property.
	\end{result}

	The following definition is from \cite{KutSam2013}.
	\begin{definition}
		\emph{Two embeddings $\Phi,\Psi:\cplx\hookrightarrow\cplx^{n}$ are equivalent} if there exist automorphisms $\phi\in \text{Aut}(\cplx^n)$ and $\psi\in \text{Aut}(\cplx)$ such that $\phi\circ\Phi=\Psi\circ\psi.$
	\end{definition}
	
	\begin{result}\label{Rmk:Abhy_Sukuki}
		Every polynomial embedding of $\cplx$ into $\cplx^2$ is equivalent to the standard one, i.e. if a complex algebraic curve in $\cplx^2$ is isomorphic to $\cplx$ then in some
		algebraic coordinates $(z^{*}_1,z^{*}_{2})$ on $\cplx^{2}$ its equation is $z^{*}_{1}=0$  (see Abhyankar-Moh \cite{AbhMoh75}, and Suzuki \cite{Suzuki74}).
	\end{result}
	
	\par The following result \cite[Theorem 5.1]{WR82} gives a criterion for a homogeneous polynomial map from $\cplx^{n}$ to $\cplx^{n}$ to be proper holomorphic.
	\begin{result}[Rudin]\label{R:Hom_Proper}
		Suppose that
		\begin{enumerate}
			\item $d_1,\cdots,d_{n}$ are positive integers,
			\item $p_{j}:\cplx^{n}\to \cplx$ is a homogeneous polynomial of degree $d_{j},$ for $j=1,\cdots,n,$
			\item Define $F:\cplx^n\to \cplx^n $ by $F(z_1,\cdots,z_n)=(p_1(z_1,\cdots,z_n),\cdots,p_{n}(z_1,\cdots,z_n)).$
		\end{enumerate}
		Then $F:\cplx^n\to\cplx^n$ is proper if and only if $F^{-1}\{0\}=\{0\}.$
	\end{result}
	
	The following result is due to Izzo-Samuelsson-Wold \cite[Lemma 2.2]{ISW16}.
	
	\begin{result}[Izzo-Samuelsson-Wold]\label{R:Coplx_Pt_Variety}
		Let $\Omega\subset \cplx^n$ be a domain, let $h_{j}\in PH(\Omega)$ for $j = 1,\cdots,N,$ and
		$Z\subset \Omega$ be an irreducible analytic set of dimension $d\ge 1.$ Let $1\le m\le d,$ fix $(i_1,\cdots,i_m),$ and define	
		\begin{align*}
			Z':=\{z\in Z_{\text{reg}}:i^{*}_{Z_{\text{reg}}}(\overline{\partial}h_{j_1}\wedge\cdots\wedge \overline{\partial}h_{j_n})(z)=0\}.
		\end{align*}
		Then $\widetilde{Z}:=Z^{'}\cup Z_{sing}$ is an analytic subset of $\Omega.$
	\end{result}

	The following result is from \cite[Lemma 1.3]{Alexander1993}.
	
	\begin{result}[Alexander]\label{R:Component_Non_Empty}
		If $X$ is a compact subset of $\cplx^n$ and $E$ is a component of $\widehat{X}$, then
		$E=\widehat{X\cap E}$ (by a component of $X,$ we mean a maximal connected subset of $X$).
	\end{result}
	The following result is known as the Remmert proper mapping theorem. It will play a crucial role in our proofs. 
	
	\begin{result}[Remmert
		Proper Mapping theorem \cite{Rem56,Rem57}]\label{R:Remmert}
		Let $M, N$ be complex spaces and $f:M\to N$ is a proper holomorphic map. If $Z$ is an analytic subvariety in $M$ then $f(Z)$ is also an analytic subvariety in $N.$ Moreover, if $Z$ is irreducible then $f(Z)$ is also irreducible subvariety of $N.$
	\end{result}
	
	The following is known as Tornehave's result\index{Tornehave's result} (\cite[Corollary 3.8.11]{Sto07}).
	\begin{result}[Tornehave]\label{R:Tornehave_BiDisc}
		Let $V$ be a one-dimensional analytic subvariety of $\mathbb{D}^n$ such that $\overline{V}\setminus V\subset \mathbb{T}^n.$ Then there is an algebraic curve $Z$ in $\cplx^{n}$ with $V=Z\cap \mathbb{D}^n.$
	\end{result}
	The following is a very simple lemma.
	\begin{lemma}\label{L:Plurihrmonic_Cnjgte}
		Let $h$ be a complex valued pluriharmonic function on a simply connected domain $\Omega,$ then there exist holomorphic functions $\Psi$ and $\Phi$ on $\Omega$ such that $h=\Psi+\rl \Phi$ on $\Omega.$	
	\end{lemma}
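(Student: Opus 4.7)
The plan is to split $h$ into its real and imaginary parts and then use the classical fact that on a simply connected domain a real-valued pluriharmonic function is the real part of a holomorphic function. Write $h = u + iv$, where $u = \operatorname{Re} h$ and $v = \operatorname{Im} h$ are real-valued pluriharmonic functions on $\Omega$.

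Next I would produce holomorphic ``potentials'' for $u$ and $v$. The standard argument goes as follows: the $(1,0)$-form $\partial u$ is $\bar\partial$-closed (because $\partial\bar\partial u = 0$ by pluriharmonicity) and has holomorphic coefficients, and it is also $d$-closed. Since $\Omega$ is simply connected, $\partial u = dF_0$ for some function $F_0$, and a short verification shows $F_0$ can be chosen holomorphic with $2\operatorname{Re} F_0 = u + \text{const}$; absorbing the constant, one gets a holomorphic $F$ on $\Omega$ with $u = \operatorname{Re} F$. The same reasoning applied to $v$ yields a holomorphic $G$ on $\Omega$ with $v = \operatorname{Re} G$.

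It then remains to rearrange algebraically. Compute
\begin{equation*}
h = \operatorname{Re} F + i\operatorname{Re} G = iG + \bigl(\operatorname{Re} F - \operatorname{Re}(iG)\bigr) = iG + \operatorname{Re}(F - iG),
\end{equation*}
using $\operatorname{Re}(iG) = -\operatorname{Im} G$ so that $\operatorname{Re} F - \operatorname{Re}(iG) = \operatorname{Re} F + \operatorname{Im} G = \operatorname{Re}(F-iG)$. Setting $\Psi := iG$ and $\Phi := F - iG$, both of which are holomorphic on $\Omega$, gives the desired decomposition $h = \Psi + \operatorname{Re}\Phi$.

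There is no substantial obstacle here: the only nontrivial ingredient is the existence of a holomorphic primitive for the closed $(1,0)$-form $\partial u$ on a simply connected domain, and this is classical (it is the pluriharmonic analogue of the planar statement that every harmonic function on a simply connected planar domain has a harmonic conjugate). The rest is purely algebraic.
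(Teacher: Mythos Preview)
Your proof is correct. The paper itself does not supply a proof for this lemma, merely introducing it as ``a very simple lemma,'' so there is no argument to compare against; your approach---writing $h=u+iv$, finding holomorphic potentials $F,G$ with $u=\rl F$ and $v=\rl G$, and then rearranging to $\Psi=iG$, $\Phi=F-iG$---is the standard one and is consistent with how the authors invoke the lemma later (see the passage around their change of coordinates $\widetilde\Psi$, where they take a holomorphic $\Psi_j$ with $\imag h_j=\rl\Psi_j$ and then set $u_j=h_j-\Psi_j=\rl\Phi_j$).
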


	In the remaining part of this section we state and proof some results which are mostly new. These will be used in the proofs of our theorems.
	\begin{lemma}\label{L:Harmonic_bdry}
		Let $\mathfrak{D}_2=\{z\in \cplx^2: |p_{1}(z)|<1,|p_{2}(z)|<1\}$ be a bounded complex non-degenerate polynomial polyhedron and $f\in PH(\mathfrak{D}_2)\cap \smoo(\overline{\mathfrak{D}_2}).$ Then $f$ is harmonic on $b\mathfrak{D}_2.$	
	\end{lemma}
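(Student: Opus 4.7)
The plan is to read ``$f$ is harmonic on $b\mathfrak{D}_{2}$'' as saying that for each $j\in\{1,2\}$ and each $|c|=1$ the restriction of $f$ to the relative interior of the leaf $L_{c}^{j}:=\{p_{j}=c\}\cap\overline{\mathfrak{D}_{2}}$ is harmonic, when that interior is regarded as an open subset of the smooth complex curve $\{p_{j}=c\}$ (smooth by the non-degeneracy $dp_{j}\neq 0$ on $\Sigma_{j}$). By symmetry it suffices to handle $j=1$, and since harmonicity is local on a Riemann surface it is enough to verify it at an arbitrary point $p\in\{p_{1}=c,\,|p_{2}|<1\}$. Observe that at such a $p$ the $|p_{2}|<1$ constraint is locally non-binding, so the distinguished boundary plays no role in the argument.

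First I would localize. Since $dp_{1}(p)\ne 0$, choose a holomorphic $q$ with $dp_{1}\wedge dq(p)\ne 0$ and set $\Psi:=(p_{1}-c,\,q)$. Then $\Psi$ is a local biholomorphism of a neighborhood $U$ of $p$ onto a neighborhood $V$ of $0\in\cplx^{2}$; it carries $L_{c}^{1}\cap U$ to $V\cap\{w_{1}=0\}$ and $\mathfrak{D}_{2}\cap U$ to $V\cap\{|w_{1}+c|<1\}$, for $V$ small enough that the $|p_{2}|<1$ constraint stays strict. Put $\tilde f:=f\circ\Psi^{-1}$; by hypothesis $\tilde f$ is pluriharmonic on $V\cap\{|w_{1}+c|<1\}$ and continuous on its closure.

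The heart of the argument is an approximation by interior slices. For each $w_{1}^{*}$ with $|w_{1}^{*}+c|<1$ close to $0$, the slice $V\cap\{w_{1}=w_{1}^{*}\}$ is a one-dimensional complex submanifold of $\Psi(\mathfrak{D}_{2}\cap U)$, and so the restriction of the pluriharmonic function $\tilde f$ to it is harmonic in $w_{2}$. Picking a sequence $w_{1}^{(n)}\to 0$ along the inward radial direction from $-c$ (so that $|w_{1}^{(n)}+c|<1$) and shrinking $V$ if necessary, the harmonic functions $\tilde f(w_{1}^{(n)},\cdot)$ are all defined on a common open set $W\ni 0$ in the $w_{2}$-plane, and uniform continuity of $\tilde f$ on a compact neighborhood of $(0,0)$ yields
\[
\tilde f(w_{1}^{(n)},\cdot)\longrightarrow\tilde f(0,\cdot)\qquad\text{locally uniformly on }W.
\]
A locally uniform limit of harmonic functions is harmonic, so $\tilde f(0,\cdot)$ is harmonic on $W$; transporting by $\Psi$ gives that $f|_{L_{c}^{1}}$ is harmonic near $p$. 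Patching over $\{p_{1}=c,\,|p_{2}|<1\}$ and repeating the argument for the $p_{2}$-leaves completes the proof.

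The main obstacle, modest as it is, is to set up the approximation correctly: one must approach $\{w_{1}=0\}$ from inside the disc $\{|w_{1}+c|<1\}$ (hence the choice of radial direction from $-c$), and one must arrange for the harmonic functions $\tilde f(w_{1}^{(n)},\cdot)$ to share a common open domain on which locally uniform convergence takes place. Both points are handled by a small shrinking of $V$, after which the continuity of $\tilde f$ on the compact closure furnishes the required convergence.
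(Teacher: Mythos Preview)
Your proof is correct and follows the same essential strategy as the paper: approximate the boundary leaf by analytic discs lying in the interior of $\mathfrak{D}_{2}$, use pluriharmonicity to get harmonicity on each, and pass to a uniform limit. The implementation differs slightly. The paper invokes the segment property of $\mathfrak{D}_{2}$ (established separately from complex non-degeneracy) to obtain a vector $V_{\alpha}$ and then translates a fixed analytic disc $\phi:\mathbb{D}\to\mathcal{A}$ into the interior by setting $\phi_{n}=\phi+\tfrac{1}{n}V_{\alpha}$. You instead straighten the leaf via the chart $\Psi=(p_{1}-c,q)$ and move the slice parameter $w_{1}$ inward along the radius of $\{|w_{1}+c|<1\}$. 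Your version is self-contained and does not require the segment property as a separate lemma; the paper's version is more geometric and fits naturally with the later use of the segment property elsewhere. Both routes are short and equally valid.
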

	\begin{proof}
		Let $\alpha\in \mathcal{A}:=\{|p_1(z)|=1,|p_{2}(z)|<1\}\subset b\mathfrak{D}_2.$ Since $\mathfrak{D}_2$ is complex non-degenerate, $\mathfrak{D}_2$ has segment property. Therefore, there exists a vector $V_{\alpha}$ and a neighborhood $N(\alpha)$ of $\alpha$ in $\cplx^{2}$ such that
		\begin{align*}
			z+tV_{\alpha}\in \mathfrak{D}_2, \forall z\in N(\alpha)\cap \overline{\mathfrak{D}_2} , \forall t\in (0,1).
		\end{align*}
		\noindent We choose an analytic disc $\phi:\mathbb{D}\to \cplx^2$ in $N(\alpha)\cap \mathcal{A}$ i.e., $\phi(\mathbb{D})\subset N(\alpha)\cap \mathcal{A}.$ Since harmonic is a local property, it is enough to show that $f\circ\phi:\mathbb{D}\to \cplx$ is harmonic. We define a sequence of analytic disc $\phi_{n}$ by 
		\begin{align*}
			\phi_{n}(\xi)=\phi(\xi)+\frac{1}{n}V_{\alpha}, \forall n\in \mathbb{N}.
		\end{align*}
		Since $\phi(\xi)\in (N(\alpha)\cap \mathcal{A}) \subset N(\alpha)\cap  \overline{\mathfrak{D}_2}, \forall \xi\in \mathbb{D},$ hence $\phi(\xi)+\frac{1}{n}V_{\alpha}\in \mathfrak{D}_{2}$ i.e., $\phi_{n}(\mathbb{D})\subset \mathfrak{D}_2 ~\forall n\in \mathbb{N}.$ Since $f\in PH(\mathfrak{D}_2)\cap \smoo(\overline{\mathfrak{D}_2}),$ $f\circ \phi_{n}:\mathbb{D}\to \cplx$ is harmonic for all $n.$ It is easy to see that $\phi_{n}$ converges to $\phi$ uniformly on $\mathbb{D}$. Indeed, 
		\begin{align*}
			\lim_{n\to \infty} \sup_{\mathbb{D}}\|\phi_{n}(\xi)-\phi(\xi)\|=\lim_{n\to\infty} \frac{1}{n}\|V_{\alpha}\|=0.
		\end{align*}
		Therefore, $f\circ\phi_{n}$ is converges to $f\circ\phi$ uniformly on $\mathbb{D}.$ Being uniform limit of sequence of harmonic functions, $f\circ \phi$ is also harmonic.
	\end{proof}
	
	\begin{remark}\label{Rmk:Pluriharminc_bdry}
		Let $\mathfrak{D}_{n}=\{z\in \cplx^n: |p_{1}(z)|<1,\cdots,|p_{n}(z)|<1\}$ be a bounded complex non-degenerate polynomial polyhedron and $f\in PH(\mathfrak{D}_n)\cap \smoo(\overline{\mathfrak{D}_n}).$ Then for every increasing multi-index $J_{k}$ of length $k\in\{1,\cdots,n-1\}$ the function $f$ is pluriharmonic on the $(n-k)-$dimensional complex manifold $\Sigma_{J_{k}}.$ 
	\end{remark}

	\par Next, we give a class of domains that has the segment property. Let $\psi_{1},\cdots,\psi_{N}:\cplx^{n}\to \mathbb{R}$ are $C^{1}$-smooth functions. Consider the bounded domain 
	\begin{align*}
		\Omega:=\{z\in \cplx^{n}:\psi_{1}<0,\cdots,\psi_{N}<0\}.
	\end{align*} 
	
	\noindent We denote $\Sigma_{j}:=\{z\in \cplx^n:\psi_{j}(z)=0\}$ and
	$\Sigma_{J_k}:=\Sigma_{j_1}\cap\cdots\cap\Sigma_{j_k}$, where $J_k=({j_1,\cdots, j_k})$ with  $1 \leq j_1 < j_2 < \ldots < j_k \leq N$. The topological boundary of $\Omega$ is denoted by  $\partial\Omega$ and clearly $\partial\Omega\subset\cup_{j}\Sigma_{j}.$
	
	\smallskip
	
	\noindent The result presented below describes a class of domains where the segment property holds. A special case for Weil polyhedra
	was proved in \cite[Proposition 2.1]{Petrosyan2006}. 
	\begin{proposition}\label{P:Segmnt_Prorty}
		Let $\Omega$ be as above. Assume that for any collection $\{j_1,\cdots j_k\}\subset\{1,2,\cdots,N\},$ $d\psi_{j_1}\wedge\cdots\wedge d\psi_{j_k}(z)\ne 0 \text{ on } \Sigma_{J_k}~~\forall~ 1\le k\le N.$
		Then $\Omega$ has the segment property.
	\end{proposition}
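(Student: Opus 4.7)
The plan is to show that at each boundary point $\alpha \in \partial\Omega$, we can construct an inward-pointing vector $V_\alpha$ by using the nondegeneracy hypothesis to solve a small linear system involving the active constraints at $\alpha$, and then verify the segment property by a Taylor/fundamental-theorem-of-calculus estimate.

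First I would fix $\alpha \in \partial\Omega$ and let $J = J(\alpha) := \{j : \psi_j(\alpha)=0\}$ be the set of active indices, so $\alpha \in \Sigma_J$. Writing $J=\{j_1,\ldots,j_k\}$, the hypothesis gives $d\psi_{j_1}\wedge\cdots\wedge d\psi_{j_k}(\alpha)\neq 0$, which is precisely the statement that the real 1-forms $d\psi_{j_1}(\alpha),\ldots,d\psi_{j_k}(\alpha)$ are linearly independent elements of $T^{*}_\alpha\mathbb{R}^{2n}$. Consequently the real-linear map $\mathbb{R}^{2n}\to\mathbb{R}^{k}$ sending $V\mapsto(d\psi_{j_1}(\alpha)(V),\ldots,d\psi_{j_k}(\alpha)(V))$ is surjective, so I can pick a vector $V_0\in\cplx^n\cong\mathbb{R}^{2n}$ with
\[
d\psi_{j_i}(\alpha)(V_0)=-1,\quad i=1,\ldots,k.
\]

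Next I would use continuity of $d\psi_{j_i}$ and of the inactive $\psi_\ell$ to choose an open neighborhood $U$ of $\alpha$ such that, for all $w\in U$,
\[
d\psi_{j_i}(w)(V_0) \le -\tfrac{1}{2}\;\;(i=1,\ldots,k),\qquad \psi_\ell(w) \le \tfrac{1}{2}\psi_\ell(\alpha)<0\;\;(\ell\notin J).
\]
I would then choose $\varepsilon>0$ and a smaller neighborhood $N(\alpha)\subset U$ of $\alpha$ such that for every $z\in N(\alpha)$ and every $t\in[0,1]$ the segment $z+tV_\alpha$, where $V_\alpha := \varepsilon V_0$, remains inside $U$; this is possible because the map $(z,t)\mapsto z+tV_\alpha$ is continuous and equals $z$ at $t=0$, so choosing $\varepsilon$ small enough and then $N(\alpha)$ small enough suffices.

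Finally I would verify the segment condition. Fix $z\in\overline{\Omega}\cap N(\alpha)$ and $t\in(0,1)$. For $j_i\in J$, the fundamental theorem of calculus gives
\[
\psi_{j_i}(z+tV_\alpha)-\psi_{j_i}(z)=\int_{0}^{t} d\psi_{j_i}(z+sV_\alpha)(V_\alpha)\,ds=\varepsilon\int_{0}^{t} d\psi_{j_i}(z+sV_\alpha)(V_0)\,ds \le -\tfrac{\varepsilon t}{2},
\]
and since $z\in\overline{\Omega}$ gives $\psi_{j_i}(z)\le 0$, we conclude $\psi_{j_i}(z+tV_\alpha)<0$. For $\ell\notin J$, the fact that $z+tV_\alpha\in U$ yields $\psi_\ell(z+tV_\alpha)\le \tfrac12\psi_\ell(\alpha)<0$ directly. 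Both inequalities together show $z+tV_\alpha\in\Omega$, establishing the segment property at $\alpha$. The only mild obstacle is the bookkeeping in choosing the parameters $U$, $\varepsilon$, and $N(\alpha)$ in the right order so that the inward-drift estimate on active constraints and the openness buffer for inactive constraints hold simultaneously; the hard conceptual input is only the passage from the wedge-product nondegeneracy to real linear independence of the $d\psi_{j_i}(\alpha)$.
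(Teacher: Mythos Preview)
Your proof is correct and follows essentially the same strategy as the paper's: identify the active constraints at a boundary point, use the wedge-product nondegeneracy to produce a vector along which all active $\psi_{j_i}$ strictly decrease, extend this by continuity to a neighborhood, and handle inactive constraints by the open condition $\psi_\ell<0$. The only cosmetic difference is that you use the integral form of the fundamental theorem of calculus for the active-constraint estimate and treat the inactive constraints by the direct inclusion $z+tV_\alpha\in U$, whereas the paper carries out a first-order Taylor expansion with $o(\eta)$ remainder for both sets of indices; your bookkeeping is slightly cleaner but the argument is the same.
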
 
	
	\begin{proof}
		Let $\xi\in \partial\Omega.$ This implies $\xi\in \Sigma_{J_k}$ for some $J_{k}$ i.e., there exist $j_1,\cdots,j_k$ such that 
		\begin{align*}
			\psi_{j}(\xi)&=0 ~~\forall j\in\{j_1,\cdots,j_{k}\} \text{ and }\\
			\psi_{s}(\xi)&<0 ~~\forall s\notin\{j_1,\cdots,j_{k}\}.
		\end{align*}
		
		\noindent Let us define $\Psi_{J_k}:=(\psi_{j_1},\cdots,\psi_{j_k}):\cplx^{n}\to\mathbb{R}^{k},$ where $J_k=({j_1,\cdots, j_k}).$ Then the differential $d\Psi_{J_{k}}$ of $\Psi_{J_{k}}$ at $\xi$ is given by
		
		\begin{align*}
			d\Psi_{J_{k}}|_{\xi}=\begin{pmatrix} 
				\frac{\partial \psi_{j_l}(\xi)}{\partial{z}_{m}} \vline\frac{\partial \psi_{j_l}(\xi)}{\partial{\bar{z}}_{m}}
			\end{pmatrix}^{m=1,\cdots,n}_{l=1,\cdots,k}
		\end{align*}
		
		\noindent	The condition $d\psi_{j_1}\wedge\cdots\wedge d\psi_{j_k}(z)\ne 0 \text{ on } \Sigma_{J_k}$ implies that $d\Psi_{J_{k}}$ has maximal rank on $\Sigma_{J_{k}}.$  Let $\widetilde{V}=(\widetilde{V_1},\cdots,\widetilde{V_k})\in \mathbb{R}^{k}$ such that $\widetilde{V_j}<0~~\forall j\in \{1,\cdots,k\}.$ Then there exists $W=(W_1,\cdots,W_{n})\in \cplx^{n}$ such that 
		\begin{align}\label{E:Negative Vectr}
			d\Psi_{J_{k}}|_{\xi}(W)=
			\begin{pmatrix} 
				\frac{\partial \psi_{j_l}(\xi)}{\partial{z}_{m}} \vline\frac{\partial \psi_{j_l}(\xi)}{\partial{\bar{z}}_{m}}
			\end{pmatrix}\begin{pmatrix}
				W \\
				\overline{W}
			\end{pmatrix}=
			\widetilde{V}.
		\end{align}
		
		\noindent If we take $w=W+\xi,$ where $w=(w_1,\cdots,w_n),$ then from (\ref{E:Negative Vectr}), we get that
		\begin{align*}
			\notag	\sum_{l=1}^{n}\frac{\partial \psi_{j}(\xi)}{\partial{{z_{l}}}}({w_l}-{\xi_l})+\sum_{l=1}^{n}\frac{\partial \psi_{j}(\xi)}{\partial\bar{{z_{l}}}}(\bar{w_l}-\bar{\xi_l})&<0~~\forall j\in\{j_1,\cdots,j_k\}.
		\end{align*}
		
		\noindent This implies
		\begin{align}\label{E:psi_zero}
			\rl\left<\partial\psi_{j}(\xi),\bar{w}-\bar{\xi}\right>&<0, \text{ where } \partial\psi_{j}(\xi)=\left(\frac{\partial \psi_{j}(\xi)}{\partial{z}_{1}} ,\cdots,\frac{\partial \psi_{j}(\xi)}{\partial{z}_{n}}\right).
		\end{align}	
		
		\noindent Next we claim that there exists a neighborhood $U(\xi)$ of $\xi$ such that 
		\begin{align*}
			\rl\left<\partial\psi_{j}(z),\bar{w}-\bar{\xi}\right>&<0~~~~ \forall z\in U(\xi)\text{ and } j\in \{j_1,\cdots,j_k\}.
		\end{align*}
		
		\noindent Fix $j\in \{j_1,\cdots,j_{k}\}.$ Using (\ref{E:psi_zero}), for fixed $\xi$ and $w,$ we can choose $\eps_{0,j}$ such that 
		\begin{align}\label{E:Gradient_at_Pt}
			\rl\left<\partial\psi_{j}(\xi),\bar{w}-\bar{\xi}\right>+\eps_{0,j}<0.
		\end{align}
		\noindent We write
		\begin{align}\label{E:Gradient_In_Nbd}
			\notag	\rl\left<\partial\psi_{j}(z),\bar{w}-\bar{\xi}\right>&=\rl\left<\partial\psi_{j}(z)-\partial\psi_{j}(\xi)+\partial\psi_{j}(\xi),\bar{w}-\bar{\xi}\right>\\
			&=\rl\left<\partial\psi_{j}(z)-\partial\psi_{j}(\xi),\bar{w}-\bar{\xi}\right>+\rl\left<\partial\psi_{j}(\xi),\bar{w}-\bar{\xi}\right>.
		\end{align}
		
		\noindent Since $\partial\psi_{j}$ is continuous, for $\eps=\frac{\eps_{0,j}}{\|w-\xi\|}$ there exists $\delta_{1,j}>0$ such that
		\begin{align}\label{E:Continuty_Gradient}
			\|\partial\psi_{j}(z)-\partial\psi_{j}(\xi)\|<\frac{\eps_{0,j}}{\|w-\xi\|}~~~\forall z \text{ satisfying } 0<\|z-\xi\|<\delta_{1,j}.
		\end{align}
		
		\noindent Using Cauchy-Schwarz inequality together with (\ref{E:Continuty_Gradient}), we get that 
		\begin{align*}
			\rl\left<\partial\psi_{j}(z)-\partial\psi_{j}(\xi),\bar{w}-\bar{\xi}\right> &<\|\left<\partial\psi_{j}(z)-\partial\psi_{j}(\xi),\bar{w}-\bar{\xi}\right>\|<\frac{\eps_{0,j}}{\|w-\xi\|}\|w-\xi\|<\eps_{0,j}.
		\end{align*}
		
		\noindent This implies,
		\begin{align*}
			\rl\left<\partial\psi_{j}(\xi),\bar{w}-\bar{\xi}\right>+\rl\left<\partial\psi_{j}(z)-\partial\psi_{j}(\xi),\bar{w}-\bar{\xi}\right> 
			&<\eps_{0,j}+\rl\left<\partial\psi_{j}(\xi),\bar{w}-\bar{\xi}\right><0.
		\end{align*}
		Hence
		\begin{align*}
			\rl\left<\partial\psi_{j}(z),\bar{w}-\bar{\xi}\right> <0 ~\text{ for } \|z-\xi\|<\delta_{1,j}.
		\end{align*}
		\noindent Let us now define $U(\xi):=\bigcap_{j\in\{j_1,\cdots,j_{k}\}} B_{\delta_{1,j}}(\xi),$ where $B_{\delta_{1,j}}(\xi):=\{z\in \cplx^{n}:\|z-\xi\|<\delta_{1,j}\}.$ Then for all $j\in\{j_1,\cdots,j_k\},$ we obtain
		\begin{align}\label{E:psi_zero_final}
			\sum_{l=1}^{n}\left[\frac{\partial \psi_{j}(z)}{\partial{{z_{l}}}}({w_l}-{\xi_l})+\frac{\partial \psi_{j}(z)}{\partial\bar{{z_{l}}}}(\bar{w_l}-\bar{\xi_l})\right]<0,~ \forall z\in U(\xi).
		\end{align}
		
		\noindent We take $W_{\xi}=(w-\xi).$ Now for $z\in\overline{U(\xi)}$ and $\eta>0,$ we get, by expanding in Taylor series
		\begin{align*}
			\psi_{j}(z+\eta W_{\xi})=\psi_{j}(z)+\eta\left[\sum_{l=1}^{n}\frac{\partial \psi_{j}(z)}{\partial{{z_{l}}}}({w_l}-{\xi_l})+\frac{\partial \psi_{j}(z)}{\partial\bar{{z_{l}}}}(\bar{w_l}-\bar{\xi_l})\right]	+g_{j}(\eta), 
		\end{align*}
		where $g_{j}(\eta)=o(\eta).$
		We define
		\begin{align*}
			F_{j}(z):=\sum_{l=1}^{n}\frac{\partial \psi_{j}(z)}{\partial{{z_{l}}}}({w_l}-{\xi_l})+\frac{\partial \psi_{j}(z)}{\partial\bar{{z_{l}}}}(\bar{w_l}-\bar{\xi_l}).
		\end{align*}
		Therefore,
		\begin{align}\label{E:psi_eta}
			\psi_{j}(z+\eta W_{\xi})=\psi_{j}(z)+\eta F_{j}(z)+g_{j}(\eta).
		\end{align}
		Shrinking $U(\xi),$ if needed, we get
		\begin{align*}
			\psi_{j}(z)&\le 0~~\forall z\in \overline{\Omega},
		\end{align*}
		and by  (\ref{E:psi_zero_final}), we have
		\begin{align*}
			F_{j}(z) &< 0,~\forall z\in\overline{U(\xi)}.
		\end{align*}
		Since $\overline{U(\xi)}$ is compact, there exists $\eps>0$ such that $F_{j}(z)+\eps<0.$ For this $\eps,$ we can find $\eta_{0,j}>0$ such that $|g_{j}(\eta)|<\eps\eta_{0,j}.$ Therefore, from (\ref{E:psi_eta}), we get that 
		\begin{align*}
			\psi_{j}(z+\eta W_{\xi})\le\psi_{j}(z)+\eta_{0,j}[F_{j}(z)+\eps]<0,~ \forall \eta<\eta_{0,j}.
		\end{align*}
		
		\noindent We take	$\tilde{\eta}_1:=\min\{\eta_{0,j_1},\cdots,\eta_{0,j_k}\}.$ Then we have
		\begin{align*}
			\psi_{j}(z+\eta W_{\xi})<0,~\forall \eta<\tilde{\eta}_1, \forall j\in\{j_1,\cdots,j_{k}\}, \text{ and } \forall z\in\overline{U(\xi)}\cap\overline{\Omega}.
		\end{align*}
		
		\par Next, let us fix $s\notin\{j_1,\cdots,j_k\}.$ Then $\psi_{s}(\xi)<0.$ By continuity of $\psi_{s},$ there exists a neighborhood $B_{s}(\xi)$ of $\xi$ such that 
		\begin{align*}
			\psi_{s}(z)<0,~\forall z\in B_{s}(\xi). 
		\end{align*}	
		\noindent For $z\in B_{s}(\xi)$ and for some $\delta_{1}>0,$	we have
		
		\begin{align}\label{E:psi_nonzero}
			&\notag\psi_{s}(z+\delta_1 W_{\xi})=\psi_{s}(z)+\\
			&\delta_{1}\left[\sum_{l=1}^{n}\frac{\partial \psi_{s}(z)}{\partial{{z_{l}}}}({w_l}-{\xi_l})+\frac{\partial \psi_{s}(z)}{\partial\bar{{z_{l}}}}(\bar{w_l}-\bar{\xi_l})\right]	+q_{s}(\delta_{1}), \text{ where } q_{s}(\delta_{1})=o(\delta_{1}).
		\end{align}
		\noindent	Let us define
		\begin{align*}
			G_{s}(z):=\left[\sum_{l=1}^{n}\frac{\partial \psi_{s}(z)}{\partial{{z_{l}}}}({w_l}-{\xi_l})+\frac{\partial \psi_{s}(z)}{\partial\bar{{z_{l}}}}(\bar{w_l}-\bar{\xi_l})\right].
		\end{align*}
		Therefore,	
		\begin{align*}
			\psi_{s}(z+\delta_1 W_{\xi})=\psi_{s}(z)+\delta_{1}G_{s}(z)+q_{s}(\delta_{1}).
		\end{align*}
		Since $G_{s}$ is continuous, there exists an $M>0$ such that 
		\begin{align*}
			G_{s}(z)\le M~~~\forall z\in \overline{B_{s}(\xi)}.
		\end{align*}
		Therefore, (\ref{E:psi_nonzero}) becomes
		\begin{align*}
			\psi_{s}(z+\delta_1 W_{\xi})\le\psi_{s}(z)+\delta_{1}M+q_{s}(\delta_{1}).
		\end{align*}
		Note that $\psi_{s}<0$ on $\overline{B_{s}(\xi)}$
		and $\delta_{1}M+q_{s}(\delta_{1})\to 0$ as $\delta_{1}\to 0.$ Choose $\eps>0$ such that $\psi_{s}(z)+\eps<0$ on $\overline{B_{s}(\xi)}.$ For this $\eps>0,$ there exists $\delta_{0,s}>0$ such that 
		\begin{align*}
			|\delta_{1}M+q_{s}(\delta_{1})|<\eps~~~~\text{ for } \delta_{1}<\delta_{0,s}.
		\end{align*}
		\noindent Take $B(\xi):=\displaystyle\bigcap_{s\notin\{j_1,\cdots,j_k\}}B_{s}(\xi).$
		Therefore, for all $z\in \overline{B(\xi)},$
		$\psi_{s}(z+\delta_{1}W_{\xi})<0$ for $\forall \delta_1<\delta_{0,s}.$
		Choose $\tilde{\eta}_2:=\min\{\delta_{0,s}:s\notin\{j_1,\cdots,j_k\}\},$ and  we define $\gamma:=\min\{\tilde{\eta}_1,\tilde{\eta}_2\}.$ Therefore,
		\begin{align*}
			\psi_{j}(z+kW_{\xi})<0,~\forall k<\gamma,\text{ and } \forall z\in N{(\xi)}\cap\overline{\Omega},~~\text{ where } N(\xi)\underset{\text{open}}{\subset}\overline{U(\xi)}\cap \overline{B(\xi)}.
		\end{align*}
		\noindent If we take $V_{\xi}:=\gamma W_{\xi},$ then we get from above that
		\begin{align*}
			\psi_{j}(z+tV_{\xi})<0~~~~~~\forall t, 0<t<1~~~~ \text{ and } \forall z\in N({\xi})\cap\overline{\Omega}.
		\end{align*}
		Hence, $\Omega$ has the segment property.
	\end{proof}

	\begin{corollary}\label{C:Segmnt_plyhdrn}
		Let $\mathfrak{D}_{N}$ be a bounded complex non-degenerate polynomial polyhedron. Then $\mathfrak{D}_{N}$ has the segment property.
	\end{corollary}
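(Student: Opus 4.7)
The strategy is to deduce the corollary from Proposition~\ref{P:Segmnt_Prorty} by recasting $\mathfrak{D}_{N}$ in terms of smooth real-valued defining functions. Set $\psi_{j}(z) := |p_{j}(z)|^{2} - 1$ for $j = 1,\dots,N$; these are $\smoo^{\infty}$-smooth on $\cplx^n$, and $\mathfrak{D}_{N} = \{z : \psi_{1}(z) < 0,\dots,\psi_{N}(z)<0\}$, with the level sets $\Sigma_{j}=\{|p_j|=1\}=\{\psi_j=0\}$ coinciding in the two setups. It therefore suffices to verify the rank hypothesis of Proposition~\ref{P:Segmnt_Prorty}, namely that $d\psi_{j_1}\wedge\cdots\wedge d\psi_{j_k}(z)\neq 0$ on $\Sigma_{J_k}$ for every monotone multi-index $J_k=(j_1,\dots,j_k)$.

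The key calculation is the bitype splitting $d\psi_{j} = \bar{p}_{j}\, dp_{j} + p_{j}\, d\bar{p}_{j}$, where the first summand is of type $(1,0)$ and the second of type $(0,1)$, since each $p_j$ is holomorphic. Fix $z\in\Sigma_{J_k}$; then each $p_{j_i}(z)$ has modulus $1$ and in particular $p_{j_i}(z)\ne 0$. Suppose $\sum_{i=1}^{k} a_{i}\, d\psi_{j_i}(z) = 0$ for some real scalars $a_{i}$. Separating the relation into its $(1,0)$- and $(0,1)$-components forces the identity $\sum_{i=1}^{k} a_{i}\,\bar{p}_{j_i}(z)\, dp_{j_i}(z)=0$ in $T^{*(1,0)}_{z}\cplx^{n}$ on its own.

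By the complex non-degeneracy hypothesis, the $(1,0)$-covectors $dp_{j_1}(z),\dots,dp_{j_k}(z)$ are $\cplx$-linearly independent (this is the natural range $1\le k\le n$; for $k>n$ the set $\Sigma_{J_k}$ is overdetermined in the complex non-degenerate setting and no additional condition needs to be verified). Reading off the coefficients gives $a_{i}\bar{p}_{j_i}(z)=0$ for every $i$, and since $\bar{p}_{j_i}(z)\ne 0$ this forces $a_i = 0$. Hence $d\psi_{j_1},\dots,d\psi_{j_k}$ are real-linearly independent at $z$, so their wedge does not vanish there. Applying Proposition~\ref{P:Segmnt_Prorty} to the functions $\psi_{1},\dots,\psi_{N}$ then delivers the segment property for $\mathfrak{D}_{N}$.

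The only step requiring genuine care is the type decomposition of the real-linear dependence into its $(1,0)$- and $(0,1)$-parts; once that is in place, the complex non-degeneracy translates directly into the required real-linear independence through the nowhere-vanishing multipliers $\bar{p}_{j_i}(z)$. I expect this mild piece of bookkeeping, together with confirming that the $\cplx$-linear independence of holomorphic differentials implies $\mathbb{R}$-linear independence of the associated real differentials modulo the unit-modulus weights, to be the only point of (minor) obstacle in the argument.
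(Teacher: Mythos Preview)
Your proof follows exactly the route of the paper: set $\psi_j=|p_j|^2-1$ and invoke Proposition~\ref{P:Segmnt_Prorty}. The paper simply asserts that complex non-degeneracy implies $d\psi_{j_1}\wedge\cdots\wedge d\psi_{j_k}\ne 0$ on $\Sigma_{J_k}$; you supply the actual verification via the $(1,0)/(0,1)$ splitting $d\psi_j=\bar p_j\,dp_j+p_j\,d\bar p_j$, which is the right computation, so if anything your write-up is more complete than the paper's.

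One small point worth flagging, shared by the paper's version: your parenthetical that for $k>n$ the set $\Sigma_{J_k}$ is ``overdetermined'' and no condition needs checking is not justified as stated. Complex non-degeneracy does not force $\Sigma_{J_k}=\emptyset$ when $k>n$ (take $n=1$, $p_1(z)=z$, $p_2(z)=z^2$, so $\Sigma_{12}=\{|z|=1\}$), and for such $k$ the $(1,0)$-wedge $dp_{j_1}\wedge\cdots\wedge dp_{j_k}$ vanishes identically, so the type-splitting argument no longer yields real linear independence of the $d\psi_{j_i}$; indeed in that example $d\psi_2=2\,d\psi_1$ on $\Sigma_{12}$. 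This does not affect the paper's main application ($N=n=2$), but for $N>n$ a separate argument at boundary points with more than $n$ active constraints would be needed.
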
	
	
	\begin{proof}
		We define $\psi_{j}(z)=|p_{j}(z)|^2-1:\cplx^{n}\to \mathbb{R}~~\forall j=1,\cdots,N, N\ge n.$ Then 
		\begin{align*}
			\mathfrak{D}_{N}:=\{z\in \cplx^{n}:\psi_{1}<0,\cdots,\psi_{N}<0\}.	
		\end{align*} 
		Since $\mathfrak{D}_{N}$ is a complex non-degenerate polynomial polyhedron, $dp_{j_1}\wedge\cdots\wedge dp_{j_k}(z)\ne 0 \text{ on } \Sigma_{J_k}~~\forall~ 1\le k\le N.$ Therefore, $d\psi_{j_1}\wedge\cdots\wedge d\psi_{j_k}(z)\ne 0 \text{ on } \Sigma_{J_k}~~\forall~ 1\le k\le N.$ Now by using \Cref{P:Segmnt_Prorty}, we can say that $\mathfrak{D}_{N}$ has the segment property.	
	\end{proof}

	\begin{lemma}\label{L:Plyhdrn_Star}
		Suppose that $d_1,\cdots,d_{n}$ are positive integers and $p_{j}:\cplx^{n}\to \cplx$ is a homogeneous polynomial of degree $d_{j},$ for $j=1,\cdots,n.$
		Then the \textit{polynomial polyhedron} $\overline{\mathfrak{D}}_{n}$ given by $	\mathfrak{D}_{n}:=\{z\in\cplx^n:|p_{1}(z)|<1,\cdots,|p_{n}(z)|<1\}$ is contractible. Additionally, if we take $F=(p_1,\cdots,p_{n})$ and $F^{-1}\{0\}=\{0\},$ then $\overline{\mathfrak{D}}_{n}:=\{z\in\cplx^n:|p_{1}(z)|\le1,\cdots,|p_{n}(z)|\le1\}$ is compact.
	\end{lemma}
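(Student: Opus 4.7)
The plan is to give two independent arguments, one for contractibility and one for compactness, each very short and using the homogeneity in an essential way.

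For the contractibility, I would write down the straight-line homotopy $H:\overline{\mathfrak{D}}_{n}\times[0,1]\to\cplx^n$ defined by $H(z,t):=tz$. The key verification is that $H(z,t)$ stays inside $\overline{\mathfrak{D}}_n$: since each $p_j$ is homogeneous of degree $d_j$, for $t\in[0,1]$ and $z\in\overline{\mathfrak{D}}_n$,
\begin{align*}
|p_{j}(tz)|=t^{d_{j}}|p_{j}(z)|\le t^{d_{j}}\le 1.
\end{align*}
Hence $H$ lands in $\overline{\mathfrak{D}}_n$, is continuous, satisfies $H(z,1)=z$ and $H(z,0)=0$ (noting $0\in\overline{\mathfrak{D}}_n$ because each $p_j$ vanishes at $0$). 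The same computation with strict inequalities gives contractibility of the open polyhedron $\mathfrak{D}_n$, so the statement holds in either reading. No obstacle is expected here; this step is essentially a one-line check.

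For compactness, I would appeal directly to Rudin's criterion (Result~\ref{R:Hom_Proper}). The map $F=(p_1,\dots,p_n):\cplx^n\to\cplx^n$ is a tuple of homogeneous polynomials of degrees $d_1,\dots,d_n$. Under the additional hypothesis $F^{-1}\{0\}=\{0\}$, Result~\ref{R:Hom_Proper} gives that $F$ is proper. Since
\begin{align*}
\overline{\mathfrak{D}}_{n}=\{z\in\cplx^n:|p_j(z)|\le 1,\;j=1,\dots,n\}=F^{-1}\bigl(\overline{\dsc}^{\,n}\bigr),
\end{align*}
and $\overline{\dsc}^{\,n}$ is compact, properness of $F$ forces $\overline{\mathfrak{D}}_n$ to be compact. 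Closedness is automatic since each defining inequality is non-strict and $p_j$ is continuous, so only boundedness actually uses properness.

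The only potential subtlety is making sure the hypothesis in Result~\ref{R:Hom_Proper} applies — the $p_j$ there are required to be homogeneous of possibly different degrees, which matches the setting exactly. So there is no real obstacle: the whole lemma reduces to one homotopy computation plus a direct citation of Rudin's theorem.
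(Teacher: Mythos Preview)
Your proposal is correct and matches the paper's proof essentially verbatim: the paper also observes that homogeneity gives $|p_j(tz)|=t^{d_j}|p_j(z)|\le|p_j(z)|$, concludes $\mathfrak{D}_n$ is star-shaped at the origin (hence contractible via the straight-line homotopy), and then invokes \Cref{R:Hom_Proper} to get properness of $F$ and compactness of $\overline{\mathfrak{D}}_n=F^{-1}(\overline{\dsc}^{\,n})$.
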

	\begin{proof}
		Since each $p_{j}$ is a homogeneous polynomial, $0=(0,\cdots,0)\in\mathfrak{D}_{n}.$ Let $t\in[0,1]$ and $z\in \mathfrak{D}_{n}.$ Then $|p_{j}(tz)|=|t^{d_{j}}p_{j}(z)|\le|p_{j}(z)|<1.$ Therefore, $\mathfrak{D}_{n}$ is star-shaped with respect to the origin. We know that every star-shaped domain is contractible (via straight-line homotopy). 
		
		\par For the second part, assume $F=(p_1,\cdots,p_{n})$ and $F^{-1}\{0\}=\{0\}.$ By \Cref{R:Hom_Proper}, it follows that $F$ is a proper holomorphic map. Since $F^{-1}(\overline{\mathbb{D}}^n)= \overline{\mathfrak{D}}_{n},$ it follows that $ \overline{\mathfrak{D}}_{n}$ is compact, where $\overline{\mathbb{D}}^n$ is the closed unit polydisc in $\cplx^{n}.$
	\end{proof}
	
	Note that every closed bounded connected polynomial polyhedron in $\mathbb{C}$ is simply connected. However, the following example illustrates that every closed bounded polynomial polyhedron in $\mathbb{C}^2$ may not be simply connected. 
	\begin{example}
		Let $\overline{\mathfrak{D}}_2:=\{z\in \cplx^2: |z_1z_2-1|\le \frac{1}{2},|z_1+z_2-1|\le 4\}.$ Then $\mathfrak{D}_2$ is not simply connected. Note that $\overline{\mathfrak{D}}_2\subset\{z\in \cplx^2:z_{1}\ne 0\}.$ Let $\gamma:=\{(e^{i\theta},e^{-i\theta}):0\le\theta\le 2\pi\}$ be a loop in $\overline{\mathfrak{D}}_2 $ and take the projection map $\Pi:\{z\in \cplx^2:z_{1}\ne 0\}\to \cplx^{*}$ by $(z_1,z_2)\to z_1.$ Clearly, $\Pi(\gamma)=\{e^{i\theta}:0\le\theta\le 2\pi\}$ is not null homotopic in $\cplx^{*}.$ Hence, $\gamma$ is not null homotopic in $\overline{\mathfrak{D}}_2.$ Therefore, $\overline{\mathfrak{D}}_2$ is not simply connected.
	\end{example}	
	
	The proof of the following result for a $C^{1}$-smooth domain is given in
	Samuelsson and Wold \cite[Lemma 4.5]{SaW12}. 
	
	\begin{lemma}\label{L:Poly_Cnvx}
		Let $\Omega\subset\cplx^n$ be a bounded domain with $\overline{\Omega}$ polynomially convex. Also, assume that $\Omega$ has the segment property except possibly at a countable set of boundary points. Let $h_{j}\in PH(\Omega)\cap \smoo(\overline{\Omega})$ for $j=1,\cdots,N.$ Then $\Gr_{h}(\overline{\Omega})$ is polynomially convex.
	\end{lemma}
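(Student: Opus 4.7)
I would adapt the proof of \cite[Lemma 4.5]{SaW12}, in which the $\smoo^{1}$-smoothness of $\Omega$ enters only through a Sibony-type smooth plurisubharmonic approximation; under our weaker hypothesis this ingredient is supplied by the PSH-Mergelyan property (\Cref{R:PshAprx}).

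First I would verify that $\widehat{\Gr_{h}(\overline{\Omega})}$ is compact: the coordinate projections are polynomials and $\overline{\Omega}$ is polynomially convex, so the hull sits inside $\overline{\Omega}\times\prod_{j=1}^{N}\overline{\dsc}(0,\|h_{j}\|_{\overline{\Omega}})$. Suppose, for contradiction, that $(z_{0},w_{0})\in\widehat{\Gr_{h}(\overline{\Omega})}$ with $w_{0}\neq h(z_{0})$. Then $z_{0}\in\overline{\Omega}$ by projection and $w_{0,j_{0}}\neq h_{j_{0}}(z_{0})$ for some index $j_{0}$. Consider
\[
u(z,w):=|w_{j_{0}}-h_{j_{0}}(z)|^{2}.
\]
Because $h_{j_{0}}$ is pluriharmonic, $w_{j_{0}}-h_{j_{0}}(z)$ is a complex pluriharmonic function on $\Omega\times\cplx^{N}$, so $u$ is plurisubharmonic there; it is continuous on $\overline{\Omega}\times\cplx^{N}$, identically zero on $\Gr_{h}(\overline{\Omega})$, and strictly positive at $(z_{0},w_{0})$. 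By \Cref{R:hormander} applied in $\cplx^{n+N}$, it suffices to produce a plurisubharmonic function $\tilde{u}$ on $\cplx^{n+N}$ with $\tilde{u}(z_{0},w_{0})>\sup_{\Gr_{h}(\overline{\Omega})}\tilde{u}$.

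To build $\tilde{u}$, I would apply \Cref{R:PshAprx} to each of the four continuous plurisubharmonic functions $\pm\rl h_{j_{0}}$ and $\pm\imag h_{j_{0}}$ on $\overline{\Omega}$, producing plurisubharmonic approximants $\phi_{k}^{\pm},\psi_{k}^{\pm}$ on neighborhoods $U_{k}\supset\overline{\Omega}$ with $\phi_{k}^{+}\to\rl h_{j_{0}}$, $-\phi_{k}^{-}\to\rl h_{j_{0}}$, $\psi_{k}^{+}\to\imag h_{j_{0}}$, $-\psi_{k}^{-}\to\imag h_{j_{0}}$ uniformly on $\overline{\Omega}$. The functions
\[
M_{k}(z,w):=\max\bigl(\rl w_{j_{0}}+\phi_{k}^{-}(z),\ \phi_{k}^{+}(z)-\rl w_{j_{0}}\bigr)+\tfrac{1}{k},
\]
together with the analogous $N_{k}$ built from $\psi_{k}^{\pm}$ and $\imag w_{j_{0}}$, are plurisubharmonic on $U_{k}\times\cplx^{N}$ (each argument of the max is pluriharmonic plus plurisubharmonic, and the max of plurisubharmonic functions is plurisubharmonic) and, for $k$ large, are nonnegative; they approximate $|\rl w_{j_{0}}-\rl h_{j_{0}}(z)|$ and $|\imag w_{j_{0}}-\imag h_{j_{0}}(z)|$ uniformly on $\overline{\Omega}\times\overline{B}$ for any bounded $B\subset\cplx^{N}$. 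Hence $v_{k}:=M_{k}^{2}+N_{k}^{2}$ is plurisubharmonic on $U_{k}\times\cplx^{N}$ and converges uniformly to $u$ on such sets. Since $\overline{\Omega}$ is polynomially convex, $U_{k}$ may be taken to be a Runge polynomial polyhedron $\{|P_{i}|<1\}$, and $\tilde{u}:=\max\bigl(v_{k},\,C\log\max_{i}|P_{i}|\bigr)$ for suitable $C$ globalizes $v_{k}$ to a plurisubharmonic function on $\cplx^{n+N}$ with the same separation property for large $k$, giving the desired contradiction via \Cref{R:hormander}.

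The main obstacle is constructing a jointly plurisubharmonic $v_{k}$ on $U_{k}\times\cplx^{N}$: squares of plurisubharmonic functions are plurisubharmonic only when the base is signed, so the expansion $u=(\rl w-\rl h)^{2}+(\imag w-\imag h)^{2}$ cannot be mimicked by a direct substitution of one-sided psh approximants for $\rl h_{j_{0}},\imag h_{j_{0}}$. The two-sided PSH-Mergelyan approximants $\phi_{k}^{\pm},\psi_{k}^{\pm}$ are precisely what allows the $\max$-construction above to furnish nonnegative plurisubharmonic approximants of the absolute values; once this is in place, the polynomial convexity of $\overline{\Omega}$ handles the globalization to $\cplx^{n+N}$, and the remainder of Samuelsson-Wold's argument applies verbatim.
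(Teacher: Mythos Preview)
Your approach is essentially correct and follows the same philosophy as the paper's proof---identify a separating plurisubharmonic function and use PSH-Mergelyan (\Cref{R:PshAprx}) to push it to a neighborhood of $\overline{\Omega}$---but it is considerably more elaborate than necessary. The paper observes that if $\beta\neq h(\alpha)$ then, after choosing the right index $l$ and the right sign, one of the four \emph{linear} functions $\pm\rl h_{l}(z)\mp\rl w_{l}$ (or the analogous imaginary parts) already separates $(\alpha,\beta)$ from $\Gr_{h}(\overline{\Omega})$ and is plurisubharmonic on $\Omega\times\cplx^{N}$; a single one-sided PSH-Mergelyan approximation of $\rl h_{l}$ (or $\imag h_{l}$) then yields a psh function $\tilde\Psi_l$ on $\widetilde\Omega\times\cplx^{N}$ for a Runge Stein $\widetilde\Omega\supset\overline\Omega$, and Hörmander finishes. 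By contrast, you work with the quadratic $u=|w_{j_{0}}-h_{j_{0}}|^{2}$ and must manufacture two-sided approximants, max-constructions, and squares. Your route works (the nonnegativity of $M_{k},N_{k}$ on a possibly shrunk $U_{k}\times\cplx^{N}$ follows from $\phi_{k}^{+}+\phi_{k}^{-}\to0$ uniformly on $\overline{\Omega}$), but the linear choice avoids all of this.

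There is one genuine wrinkle in your final step: the globalization $\tilde u=\max\bigl(v_{k},\,C\log\max_{i}|P_{i}|\bigr)$ does not produce a psh function on $\cplx^{n+N}$ as written, because $v_{k}(z,w)\to\infty$ as $|w_{j_{0}}|\to\infty$ for every fixed $z\in U_{k}$, so near $\{\max_i|P_i|=1\}\times\cplx^{N}$ the maximum does not transition to the second argument and the patching fails. Fortunately this step is unnecessary: $U_{k}\times\cplx^{N}$ is itself a polynomial polyhedron in $\cplx^{n+N}$, hence Runge and pseudoconvex, and since $\widehat{\Gr_{h}(\overline{\Omega})}\subset\overline{\Omega}\times\cplx^{N}\subset U_{k}\times\cplx^{N}$, Hörmander's \Cref{R:hormander} applied in $G=U_{k}\times\cplx^{N}$ already gives $(z_{0},w_{0})\notin\widehat{\Gr_{h}(\overline{\Omega})}$ directly from your $v_{k}$. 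This is exactly how the paper concludes.
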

	
	\begin{proof}
		Since $\overline{\Omega}$ is polynomially convex, $\widehat{\Gr_{h}(\overline{\Omega})}\subset \overline{\Omega}\times\cplx^{N}.$ Let $(\alpha,\beta)\in \left(\overline{\Omega}\times\cplx^{N}\right)\setminus \Gr_{h}(\overline{\Omega}).$ Therefore, $\beta\ne h(\alpha).$ Then there exists $l\in\{1,\cdots,N\}$ such that $h_{l}(\alpha)\ne \beta_{l}.$ We assume that $\rl( h_{l}(\alpha)-\beta_{l})>0$. Let us define $\Psi_{l}(z,w)=\rl h_{l}(z)-\rl w_{l}.$ Clearly, $\rl h_{l}(z)$  is plurisubharmonic function on $\Omega$ and continuous on $\overline{\Omega}$. By \Cref{R:PshAprx}, the function $\rl h_{l}$ can be approximated uniformly on $\overline{\Omega}$ by a sequence of functions $\{\eta_{j}\}$ in ${\sf psh}(\overline{\Omega})\cap\smoo^{\infty}(\overline{\Omega}).$ If $\tilde{\eta}_{j}:=\eta_{j}-\rl w_{l},$ then $\{\tilde{\eta}_{j}\}$ approximates $\Psi_{l}$ uniformly on compact subsets of $\overline{\Omega}\times \cplx^{N}.$ Moreover, we have $\Psi_{l}(\alpha,\beta)>\sup_{\Gr_{h}(\overline{\Omega})}\Psi_{l}=0.$ Since $\overline{\Omega}$ is polynomially convex, there is an open Runge and Stein neighborhood $\widetilde{\Omega}\supset \overline{\Omega}$ and a function $\widetilde{\Psi_{l}}\in {\sf psh}(\widetilde{\Omega}\times \cplx^{N})$ such that $\widetilde{\Psi_{l}}(\alpha,\beta)>\sup_{\Gr_{h}(\overline{\Omega})}\widetilde{\Psi_{l}}.$ Using H\"{o}rmander's result (\Cref{R:hormander}), we get that $(\alpha,\beta)\notin\widehat{\Gr_{h}(\overline{\Omega})}_{\hol(\widetilde{\Omega}\times \cplx^{N})},$ and hence $(\alpha,\beta)\notin\widehat{\Gr_{h}(\overline{\Omega})}.$ Thus $\Gr_{h}(\overline{\Omega})$ is polynomially convex.
	\end{proof}

	Next, we state and prove a version of $Tornehave$'s result for polynomial polyhedron.	
	\begin{lemma}\label{L:Tornehave_PP}
		Let the polynomials $p_1,\cdots,p_{n}$ define the polynomial polyhedron $\mathfrak{D}_{n}$, and $\Psi=(p_1,\cdots,p_{n})$ is a proper polynomial map. Let $V$ be an irreducible analytic variety in $\mathfrak{D}_{n}$ of dimension one such that $\overline{V}\setminus V\subset \Gamma_{\mathfrak{D}_{n}}.$ Then there exists an algebraic variety $\mathcal{Z}\subset\cplx^{n}$ of dimension one such that $V=\mathcal{Z}\cap \mathfrak{D}_{n}.$
	\end{lemma}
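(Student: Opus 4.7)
The strategy is to reduce the assertion to the classical Tornehave theorem (Result~\ref{R:Tornehave_BiDisc}) on the polydisc, using the proper polynomial map $\Psi=(p_1,\dots,p_n)$ as a bridge. First, since $\Psi$ is proper on $\cplx^n$ and $V$ is closed in $\mathfrak{D}_n$, the restriction $\Psi|_V$ is proper; by Remmert's proper mapping theorem (Result~\ref{R:Remmert}), $\Psi(V)$ is an irreducible analytic subvariety of $\mathbb{D}^n$, of dimension one (the dimension cannot drop since a proper polynomial map between equidimensional affine spaces has finite fibers).

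Next I will verify that $\overline{\Psi(V)}\setminus\Psi(V)\subset\mathbb{T}^n$. If $w=\lim\Psi(v_k)$ with $v_k\in V$, then by compactness of $\overline{\mathfrak{D}_n}=\Psi^{-1}(\overline{\mathbb{D}}^n)$ a subsequence satisfies $v_k\to v\in\overline{\mathfrak{D}_n}$. If $w\notin\Psi(V)$ then $v\notin V$, so $v\in\overline{V}\setminus V\subset\Gamma_{\mathfrak{D}_n}=\Psi^{-1}(\mathbb{T}^n)$, giving $w=\Psi(v)\in\mathbb{T}^n$. Applying Result~\ref{R:Tornehave_BiDisc} to $\Psi(V)$ produces an algebraic curve $Z'\subset\cplx^n$ with $\Psi(V)=Z'\cap\mathbb{D}^n$; replacing $Z'$ by the Zariski closure of $\Psi(V)$ in $\cplx^n$, we may assume $Z'$ is irreducible.

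Now pull back: take $\mathcal{Z}$ to be the Zariski closure of $V$ in $\cplx^n$. Since $\Psi$ is finite, $\Psi^{-1}(Z')$ is an algebraic subvariety of dimension one, and because $V\subset\Psi^{-1}(Z')$, the set $\mathcal{Z}$ is a one-dimensional irreducible algebraic subvariety (hence an irreducible algebraic component of $\Psi^{-1}(Z')$). The inclusion $V\subset\mathcal{Z}\cap\mathfrak{D}_n$ is clear. For the reverse inclusion, any other analytic irreducible component $U$ of $\mathcal{Z}\cap\mathfrak{D}_n$ would again be a one-dimensional irreducible analytic subvariety with $\Psi(U)=\Psi(V)$ (since $\Psi(U)$ is an analytic subvariety of the irreducible $\Psi(V)$ of the same dimension) and $\overline{U}\setminus U\subset\Gamma_{\mathfrak{D}_n}$ (by the same compactness argument), and would also have Zariski closure $\mathcal{Z}$. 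Passing to the normalization $\pi:\tilde{\mathcal{Z}}\to\mathcal{Z}$ of the irreducible algebraic curve $\mathcal{Z}$, where $\tilde{\mathcal{Z}}$ is a connected Riemann surface, one uses the polydisc Tornehave equality $Z'\cap\mathbb{D}^n=\Psi(V)$ together with the connectedness of $\tilde{\mathcal{Z}}$ to show that $\pi^{-1}(\mathcal{Z}\cap\mathfrak{D}_n)$ cannot split into disjoint pieces each mapping onto $\Psi(V)$, so that $U$ must coincide with $V$.

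The principal obstacle is precisely the last equality $V=\mathcal{Z}\cap\mathfrak{D}_n$: a priori, $\mathcal{Z}\cap\mathfrak{D}_n$ could decompose into several analytic ``sheets'' of the finite cover $\Psi|_\mathcal{Z}:\mathcal{Z}\to Z'$ above $Z'\cap\mathbb{D}^n$, so one must delicately combine the irreducibility of $\mathcal{Z}$ with the hypothesis $\overline{V}\setminus V\subset\Gamma_{\mathfrak{D}_n}$ to force all potential extra sheets to collapse into the single irreducible component $V$.
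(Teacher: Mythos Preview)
Your overall strategy coincides with the paper's: push $V$ forward by the proper map $\Psi$, apply Tornehave's theorem on the polydisc to obtain an algebraic curve $Z'$ with $\Psi(V)=Z'\cap\mathbb{D}^n$, and then pull back to locate an irreducible algebraic curve containing $V$. The paper argues (using connectedness of the regular locus of the irreducible $V$) that $V$ must sit inside a single irreducible component $Z_1$ of $\Psi^{-1}(Z')$; this $Z_1$ is exactly your Zariski closure $\mathcal{Z}$.

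The gap is in your treatment of the last equality $V=\mathcal{Z}\cap\mathfrak{D}_n$. You correctly flag it as the crux, but what you offer is a sketch, not a proof: you pass to the normalization $\tilde{\mathcal{Z}}$ and assert that its connectedness prevents $\pi^{-1}(\mathcal{Z}\cap\mathfrak{D}_n)$ from splitting into separate sheets over $\Psi(V)$, yet you never actually carry out this argument, and your final paragraph openly calls it an unresolved ``principal obstacle.'' Saying that one ``must delicately combine'' irreducibility with the boundary hypothesis is a description of a proof, not a proof.

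The paper dispatches this step far more directly, without any normalization or sheet-counting. Having fixed the irreducible one-dimensional $Z_1\supset V$, it simply invokes the dimension principle for subvarieties of an irreducible variety (citing Federer and Anderson--Izzo--Wermer): a proper closed analytic subvariety of an irreducible one-dimensional variety has strictly smaller dimension and is therefore discrete. Since $V$ is one-dimensional, it cannot be such a proper subvariety, and hence $V=Z_1\cap\mathfrak{D}_n$. You should replace your normalization sketch with this dimension comparison.
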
	
	
	\begin{proof}
		Let $V$ be an irreducible analytic variety of dimension one in $\mathfrak{D}_{n}$ such that $\overline{V}\setminus V\subset \Gamma_{\mathfrak{D}_{n}}.$ By \Cref{R:Remmert}, $\Psi(V)$ is also an irreducible analytic variety of dimension one in $\mathbb{D}^n.$ It is easy to show that $\overline{\Psi(V)}\setminus \Psi(V)\subset \mathbb{T}^{n}.$ Then by \Cref{R:Tornehave_BiDisc}, there exists an algebraic variety $Z\subset\cplx^{n}$ such that $\Psi(V)=Z\cap \mathbb{D}^{n}.$ Therefore, $\Psi^{-1}(\Psi(V))=\Psi^{-1}(Z\cap \mathbb{D}^{n})=\Psi^{-1}(Z)\cap \Psi^{-1}(\mathbb{D}^{n}).$ Since $Z$ is an algebraic variety in $\cplx^{n}$ and $\Psi$ is a polynomial map, $\Psi^{-1}(Z)$ is also an algebraic variety in $\cplx^{n}.$ Assume that $\Psi^{-1}(Z)=\cup Z_{j},$ where $Z_{j}$ is an irreducible component of $\Psi^{-1}(Z)$ of dimension one.  Clearly, $V\subset\Psi^{-1}(Z)=\cup Z_{j}.$ We claim that $V\subset Z_{j_0}$ for some $j_{0}.$ Without loss of generality, we assume that $V \nsubseteq Z_1, V  \nsubseteq Z_2$ but $V\subset Z_{1}\cup Z_{2}.$ Hence $V=V\cap (Z_{1}\cup Z_{2})=Z_{1}\cap V\cup Z_2\cap V.$ Therefore, $V\setminus (Z_{1}\cap Z_{2})=(Z_{1}\cap V\cup Z_2\cap V)\setminus(Z_{1}\cap Z_{2}).$ This implies,
		\begin{align}\label{E:Regular set_V}
			V\setminus (Z_{1}\cap Z_{2})=((Z_{1}\cap V) \setminus(Z_{1}\cap Z_{2}))\cup ((Z_2\cap V)\setminus(Z_{1}\cap Z_{2})).
		\end{align}
		We set, $S:=V_{Sing}\cap (Z_{1}\cap Z_{2}).$  Note that every point of $Z_1\cap Z_2$ is a singular point for $Z_1\cup Z_2$, and hence isolated and regular point of the variety $V$ can not be isolated. Here $V$ is one-dimensional irreducible variety. Therefore, no point of $Z_1\cap Z_2$ is a regular point of $V.$ Hence we obtain that, $V\setminus S=V\setminus (Z_{1}\cap Z_{2}).$ Therefore, from (\ref{E:Regular set_V}) we get that
		\begin{align}\label{E:SinSet_connctd}
			V\setminus S=((Z_{1}\cap V) \setminus(Z_{1}\cap Z_{2}))\cup ((Z_2\cap V)\setminus(Z_{1}\cap Z_{2})).
		\end{align}	
		Since $S\subset V_{Sing}$ and the set of regular points of an irreducible variety is connected, therefore we can say that $V\setminus S$ is connected. But right hand side of (\ref{E:SinSet_connctd}) is disconnected, and this is a contradiction to the assumption that $V \nsubseteq Z_1, V  \nsubseteq Z_2$ but $V\subset Z_{1}\cup Z_{2}.$ Without loss of generality, we assume that $V\subset Z_{1}.$
		
		\smallskip
		
		\noindent  Next, we claim that $V=\mathfrak{D}_{n}\cap Z_{1}.$ Assume that $V\subset Z_{1}\cap \mathfrak{D}_{n}.$  Since $Z_{1}$ is an irreducible variety and if $V$ is proper subset of $Z_{1},$ then $\dim_{\mathbb{C}} V<\dim_{\mathbb{C}} Z_{1} $ (see Izzo \cite{AIW2}, Federer \cite[Theorem 3.4.8, assertion (15)]{Feder69}). This is not possible, as otherwise, $V$ will be discrete. Therefore, $V=\mathfrak{D}_{n}\cap Z_{1}.$	
	\end{proof}

	The proof of the following result for a compact connected subset of $\cplx^n$ is given in Jimbo \cite[Main Lemma]{Jimbo03}. 
	\begin{lemma}\label{L:Tot_rlpt_Pk pt}
		Let $X$ be compact subset of $\cplx^n.$ Let $\Omega$ be an open subset of $\cplx^n$ such that $X\cap \Omega=\emptyset.$ If $\widehat{X}\cap \Omega$ is contained in a totally real manifold $M$ of $\Omega,$ then $\widehat{X}\cap \Omega=\emptyset.$
	\end{lemma}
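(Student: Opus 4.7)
The natural strategy is to argue by contradiction and localize the situation via Rossi's theorem (\Cref{R:Rossi}). Assume $\widehat{X}\cap\Omega\ne\emptyset$ and fix $a\in\widehat{X}\cap\Omega$. Since $X\cap\Omega=\emptyset$ we have $a\notin X$, and by hypothesis $a\in M$. The aim is to produce a holomorphic polynomial $p$ that separates $a$ from the compact set $\widehat{X}\cap bN$ for a sufficiently small neighborhood $N$ of $a$; this will contradict the conclusion $a\in\widehat{\widehat{X}\cap bN}$ supplied by Rossi.

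Concretely, by the local polynomial convexity of totally real submanifolds (Nirenberg--Wells), for $a\in M$ one can choose an open ball $N$ around $a$ with $\overline{N}\subset\Omega$, $\overline{N}\cap X=\emptyset$, and such that $M\cap\overline{N}$ is polynomially convex. Since $a\in\widehat{X}\setminus X$ and $N$ is a relatively compact neighborhood of $a$ with $\overline{N}\cap X=\emptyset$, \Cref{R:Rossi} yields $a\in\widehat{\widehat{X}\cap bN}$. Setting $K:=\widehat{X}\cap bN$, we have by hypothesis $K\subset M\cap bN\subset M\cap\overline{N}$. On the polynomially convex totally real compactum $M\cap\overline{N}$, the O'Farrell--Preskenis (or H\"{o}rmander--Wermer) approximation theorem gives $\poly(M\cap\overline{N})=\smoo(M\cap\overline{N})$. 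Since $a\in M\cap N$ is disjoint from the compact set $M\cap bN$, one can pick a continuous function on $M\cap\overline{N}$ taking the value $1$ at $a$ and vanishing on $M\cap bN$, and then approximate it uniformly by a holomorphic polynomial $p$ so well that $|p(a)|>\sup_{K}|p|$. This contradicts $a\in\widehat{K}$ and finishes the proof.

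The chief subtlety lies in invoking the polynomial density ingredient on $M\cap\overline{N}$, which requires a mild smoothness hypothesis on $M$ (at the totally real $\smoo^1$ level this is standard). Once this is in place, the rest is a routine chain of inclusions, and in particular no connectedness assumption on $X$ enters the argument; this is precisely how the present formulation generalizes Jimbo's original statement (\cite[Main Lemma]{Jimbo03}) from compact connected sets to arbitrary compacta. As an alternative endgame, one may instead observe that polynomial density on $M\cap\overline{N}$ forces every compact subset of $M\cap\overline{N}$ to be polynomially convex, so that $\widehat{K}=K$, contradicting $a\in\widehat{K}\setminus K$.
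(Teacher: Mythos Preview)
The paper does not actually prove this lemma; it only remarks that Jimbo \cite[Main Lemma]{Jimbo03} handles the case of connected $X$ and then states the present generalization without proof. So there is no argument in the paper to compare against.

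Your proof is correct and is the natural one. The three ingredients---local polynomial convexity of totally real submanifolds (Nirenberg--Wells), Rossi's local maximum principle (\Cref{R:Rossi}), and uniform polynomial approximation on a polynomially convex totally real compactum---combine exactly as you describe, and nowhere does connectedness of $X$ enter. The only point worth making explicit is that $bN\subset\Omega$ (since $\overline{N}\subset\Omega$), so the hypothesis $\widehat{X}\cap\Omega\subset M$ indeed forces $K=\widehat{X}\cap bN\subset M\cap\overline{N}$; you use this but it could be stated more visibly. Your alternative endgame---observing that $\poly(M\cap\overline{N})=\smoo(M\cap\overline{N})$ forces every compact subset of $M\cap\overline{N}$ to be polynomially convex, whence $\widehat{K}=K$ and $a\notin\widehat{K}$---is equally valid and arguably cleaner than the explicit separating-polynomial construction.
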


	\begin{theorem}\label{T: Approx_Cont_Func}
		Let $p_1,\cdots,p_n$ be holomorphic polynomials in $\cplx^{n}$ such that $z\mapp (p_1(z),\cdots,p_{n}(z))$ is a proper holomorphic map from $\cplx^{n}$ to $\cplx^{n}.$ Define $\Gamma:=\{z\in\cplx^n:|p_1(z)|=1,\cdots,|p_{n}(z)|=1\}$ and assume that $dp_{1}\wedge\cdots\wedge dp_{n}(z)\ne 0 \text{ on } \Gamma.$ Let $N\ge 1$ and $f_1,\cdots,f_{N}\in \smoo(\Gamma).$ Then $[z_1,\cdots,z_n,f_{1},\cdots,f_N;\Gamma]=\smoo(\Gamma)$ if and only if $\Gr_{f}(\Gamma)$ is polynomially convex, where $f=(f_1,\cdots,f_{N}).$  
	\end{theorem}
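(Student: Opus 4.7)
The plan is to prove the two implications separately.

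For the forward implication, assume $[z_1,\ldots,z_n,f_1,\ldots,f_N;\Gamma]=\smoo(\Gamma).$ The graph map $\iota:\Gamma\to\Gr_{f}(\Gamma),$ $z\mapp(z,f(z)),$ is a homeomorphism, and any polynomial $P$ in the coordinates of $\cplx^{n+N}$ pulls back to $P(z,f(z))\in[z,f;\Gamma].$ If $(a,b)\in\widehat{\Gr_{f}(\Gamma)},$ the evaluation $P\mapp P(a,b)$ is a continuous multiplicative functional on the uniform closure of polynomial restrictions to $\Gr_{f}(\Gamma);$ transporting through $\iota^{*}$ yields a multiplicative functional on $[z,f;\Gamma]=\smoo(\Gamma),$ which must be a point evaluation at some $c\in\Gamma.$ Matching the images of coordinate functions forces $(a,b)=(c,f(c))\in\Gr_{f}(\Gamma),$ so $\Gr_{f}(\Gamma)$ is polynomially convex.

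For the reverse implication, assume $\Gr_{f}(\Gamma)$ is polynomially convex. The strategy is to exhibit $\Gr_{f}(\Gamma)$ as a compact totally real submanifold of $\cplx^{n+N}$ and then apply \Cref{R:Stratificatn_Apprx} with the trivial stratification $X_{0}=\emptyset\subset X_{1}=\Gr_{f}(\Gamma)$ (the hypothesis $\smoo(X_0)=\hol(X_0)$ being vacuous). First, $\Gamma$ itself is totally real: the map $\Psi:=(p_1,\ldots,p_n)$ is proper holomorphic and, by the hypothesis $dp_1\wedge\cdots\wedge dp_n\ne 0$ on $\Gamma,$ its holomorphic Jacobian is non-vanishing on $\Gamma,$ so $\Psi$ is a local biholomorphism near each point of $\Gamma.$ Since $\mathbb{T}^{n}\subset\cplx^{n}$ is totally real and $\Psi(\Gamma)\subset\mathbb{T}^{n},$ the set $\Gamma$ is a totally real real $n$-dimensional submanifold of $\cplx^{n}.$ Total reality of $\Gr_{f}(\Gamma)$ then follows from a direct tangent-space computation: a tangent vector at $(z,f(z))$ has the form $(v,df_{z}(v))$ with $v\in T_{z}\Gamma,$ and an identity $(v_1,df_{z}(v_1))=i(v_2,df_{z}(v_2))$ forces $v_1=iv_2\in T_{z}\Gamma\cap iT_{z}\Gamma=\{0\}.$ An application of \Cref{R:Stratificatn_Apprx} then gives $[z_1,\ldots,z_{n+N};\Gr_{f}(\Gamma)]=\smoo(\Gr_{f}(\Gamma)),$ and pulling back through $\iota$ yields the desired equality $[z_1,\ldots,z_n,f_1,\ldots,f_N;\Gamma]=\smoo(\Gamma).$

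The main obstacle I anticipate is the verification that $\Gr_{f}(\Gamma)$ is totally real. \Cref{R:Graph_Tot_Rl} is stated for graphs of smooth maps over \emph{complex} manifolds and therefore does not directly apply here, since $\Gamma$ is totally real rather than a complex submanifold; total reality must instead be propagated from the base $\Gamma$ to the graph by a direct argument, which in turn relies on the non-degeneracy hypothesis $dp_1\wedge\cdots\wedge dp_n\ne 0$ on $\Gamma$ to identify $\Gamma$ itself as totally real. The rest of the argument is a combination of standard uniform-algebra reasoning (for the forward implication) and the trivial-stratification case of \Cref{R:Stratificatn_Apprx} (for the reverse implication).
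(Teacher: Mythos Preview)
Your forward implication is fine and matches the paper's in spirit; the paper simply notes that $[z,f;\Gamma]=\smoo(\Gamma)$ forces $\poly(\Gr_f(\Gamma))=\smoo(\Gr_f(\Gamma))$, hence polynomial convexity.

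The reverse implication has a genuine gap. The functions $f_1,\ldots,f_N$ are assumed only to lie in $\smoo(\Gamma)$, i.e.\ they are merely continuous. Your argument for total reality of $\Gr_f(\Gamma)$ invokes the differential $df_z$ and describes tangent vectors as $(v,df_z(v))$; this makes no sense for continuous $f$, and indeed the graph of a continuous map over $\Gamma$ need not be a $\smoo^1$ submanifold at all, nor is it clear that it is locally contained in any totally real $\smoo^1$ submanifold of $\cplx^{n+N}$. Consequently the appeal to \Cref{R:Stratificatn_Apprx} (which requires the strata to be totally real sets) is not justified.

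The paper circumvents exactly this difficulty by a two-step bootstrap. First it treats the \emph{smooth} special case $g=(\overline{p_1},\ldots,\overline{p_n})$: the graph $K=\Gr_g(\Gamma)$ can be rewritten as $\{(z,w):w_jp_j(z)=1,\ |p_j(z)|=1\}$, whose polynomial convexity is checked by hand, and total reality is legitimate here because $g$ is real-analytic. This yields $[z,\overline{p_1},\ldots,\overline{p_n};\Gamma]=\smoo(\Gamma)$. Second, assuming $\Gr_f(\Gamma)$ is polynomially convex, the paper observes that $1/p_j(z)$ is holomorphic in a neighborhood of $\Gr_f(\Gamma)$ in $\cplx^{n+N}$, so Oka--Weil gives polynomials $P_k(z,w)$ converging uniformly to $1/p_j$ on $\Gr_f(\Gamma)$; evaluating at $w=f(z)$ shows $\overline{p_j}=1/p_j\in[z,f;\Gamma]$. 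Combining the two steps gives $\smoo(\Gamma)=[z,\overline{p};\Gamma]\subset[z,f;\Gamma]$. The key point is that total reality is only ever applied to a graph of \emph{smooth} functions, never to $\Gr_f(\Gamma)$ itself.
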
	
	
	\begin{proof}
		Since the map $z\mapp (p_1(z),\cdots,p_n(z))$ is proper map from $\cplx^{n}$ to $\cplx^{n},$ $\Gamma$ is compact. First, we show that $[z_1,\cdots,z_n,\overline{{p_1}},\cdots,\overline{{p_n}};\Gamma]=\smoo(\Gamma).$ We consider 
		\begin{align*}
			K:&=\{(z_1,\cdots,z_n,w_1,\cdots,w_n):w_{j}=\overline{p_{j}(z)}, |p_{j}(z)|=1,j=1,\cdots,n\}\\
			&=\{(z_1,\cdots,z_n,w_1,\cdots,w_n):w_{j}p_{j}(z)=1, |p_{j}(z)|=1,j=1,\cdots,n\}\subset \cplx^{2n}.
		\end{align*}
		We claim that $K$ is polynomially convex. To prove this claim, we assume that $(a,b)\in \cplx^{2n}\setminus K,$ where $a=(a_1,\cdots,a_n)$ and $b=(b_1,\cdots,b_n).$	
		
		\noindent  {\bf Case I:} $b_{l}p_{l}(a)\not =1,$ $|p_{l}(a)|=1$ for some $l\in \{1,\cdots,n\}.$ Consider the polynomial $\psi(z,w)=w_{l}p_{l}(z)-1.$ Then
		\begin{align*}
			|\psi(a,b)|>0=\sup_{K}|\psi|.
		\end{align*}
		Hence, in this case, $(a,b)\notin\widehat{K}.$
		
		\noindent  {\bf Case II:} $b_{l}p_{l}(a) =1,$ $|p_{l}(a)|<1$ for some $l\in \{1,\cdots,n\}.$ This implies $|b_{l}|>1.$ Consider the polynomial $\psi(z,w)=w_{l}.$ Then
		\begin{align*}
			|b_{l}|=|\psi(a,b)|>1=\sup_{K}|\psi|.
		\end{align*}
		Hence, in this case, $(a,b)\notin\widehat{K}.$
		
		\noindent  {\bf Case III:} $b_{l}p_{l}(a) =1,$ $|p_{l}(a)|>1$ for some $l\in \{1,\cdots,n\}.$ Consider the polynomial $\psi(z,w)=p_{l}(z).$ Then
		\begin{align*}
			|p_{l}(a)|=|\psi(a,b)|>1=\sup_{K}|\psi|.
		\end{align*}
		Hence, in this case, $(a,b)\notin\widehat{K}.$
		
		\noindent Combining all three cases, we conclude that $K$ is polynomially convex. Since $dp_{1}\wedge\cdots\wedge dp_{n}(z)\ne 0 \text{ on } \Gamma,$ $\Gamma$ is totally real, and being graph over a totally real manifold, $K$ is also totally real. Therefore, we get that $\poly(K)=\smoo(K).$ Since $K$ is the graph of $(\overline{p_1},\cdots,\overline{p_n})$ on $\Gamma,$ hence
		\begin{align}\label{E:Apprx_Poly}
			[z_1,\cdots,z_n,\overline{p_{1}},\cdots,\overline{p_n};\Gamma]=\smoo(\Gamma).
		\end{align}
		Now we will prove the main result. Assume $[z_1,\cdots,z_n,f_{1},\cdots,f_N;\Gamma]=\smoo(\Gamma).$ Define $X=\Gr_{f}(\Gamma).$ Since $[z_1,\cdots,z_n,f_{1},\cdots,f_N;\Gamma]=\smoo(\Gamma),$ then $\poly(X)=\smoo(X).$ Hence $X$ is polynomially convex.
		
		\smallskip 
		
		\noindent Conversely, assume that $\Gr_{f}(\Gamma)$ is polynomially convex. Let $\Omega$ be a neighborhood of $\Gamma$ such that $p_{1}(z)\not =0~~\text{ on } \Omega.$ Define $\phi(z,w)=\frac{1}{p_1(z)}.$ Then $\phi$ is holomorphic on $\Omega.$ Also, $\phi$ is holomorphic on $\Omega\times \mathbb{C}^{n}.$ Since $\Gr_{f}(\Gamma)\subset \Omega\times\cplx^n,$ by the \textit{Oka-Weil} approximation theorem, there exists a sequence of polynomials $P_{j}$ in $\cplx^{n}_{z}\times \cplx^{n}_{w}$ such that
		$P_{j}\to \phi$  uniformly on $ \Gr_{f}(\Gamma).$ This implies $P_{j}(z,f(z))\to \phi(z,w)=\frac{1}{p_{1}(z)}=\overline{p_{1}(z)}$ uniformly on  $\Gamma.$ Hence $\overline{p_{1}(z)}\in [z_1,\cdots,z_n,f_{1},\cdots,f_N;\Gamma].$ By the similar method we can show that $\overline{{p_j}}\in [z_1,\cdots,z_n,f_{1},\cdots,f_N;\Gamma]~\forall_{j}\in\{1,\cdots,n\}.$ Hence, $[z_1,\cdots,z_n,\overline{p_{1}},\cdots,\overline{p_n};{\Gamma}]\subset [z_1,\cdots,z_n,f_{1},\cdots,f_N;\Gamma].$ Using (\ref{E:Apprx_Poly}), we get that 
		\begin{align*}
			[z_1,\cdots,z_n,\overline{p_{1}},\cdots,\overline{p_n};\Gamma]  =\smoo(\Gamma) =[z_1,\cdots,z_n,f_{1},\cdots,f_N;\Gamma].
		\end{align*}
	\end{proof}

	\section{Proof of \Cref{T:PPolyhrn_DistingBdry}}\label{S:proofMain}
	
	Let $h_{j}$ denote the pluriharmonic extension of $f_{j}$ to $\mathfrak{D}_{2},$ and write $h=(h_1,\cdots,h_{N}):\overline{\mathfrak{D}}_{2}\to \cplx^{N}.$ The following two cases may occur:
	
	\smallskip
	
	\noindent {\bf Case I}: $\Gr_{h}(\Gamma_{\mathfrak{D}_{2}})$ is polynomially convex. In view of \Cref{T: Approx_Cont_Func}, we have that \begin{align*}
		[z_1,z_2,f_1,\cdots,f_{N};\Gamma_{\mathfrak{D}_{2}}]=\smoo(\Gamma_{\mathfrak{D}_{2}}).
	\end{align*} 
	Therefore, in this case,  the proof of the theorem is complete.
	
	\smallskip
	
	\noindent {\bf Case II}:  $\Gr_{h}(\Gamma_{\mathfrak{D}_{2}})$ is not polynomially convex. By \Cref{L:Tornehave_PP}, it is enough to find an irreducible analytic variety $V\subset \overline{\mathfrak{D}}_{2}\setminus \Gamma_{\mathfrak{D}_{2}}$ such that $\overline{V}\setminus V\subset \Gamma_{\mathfrak{D}_{2}}.$ 
	
	\smallskip
	
	\noindent Since $\Gr_{h}(\Gamma_{\mathfrak{D}_{2}})$ is not polynomially convex, there exists $\alpha \in \overline{\mathfrak{D}}_{2}\setminus \Gamma_{\mathfrak{D}_{2}}$ such that $\Gr_{h}(\alpha) \in \widehat{\Gr_{h}(\Gamma_{\mathfrak{D}_{2}})}.$ Now, we discuss different situations through some lemmas.

	\begin{lemma}\label{L:Lemma1}
		If there exists $\alpha\in \partial{\mathfrak{D}_{2}}-\Gamma_{\mathfrak{D}_{2}}$ such that $\Gr_{h}(\alpha) \in \widehat{\Gr_{h}(\Gamma_{\mathfrak{D}_{2}})},$ then there exists an analytic set $\triangle\subset\left(\partial{\mathfrak{D}_{2}}-\Gamma_{\mathfrak{D}_{2}}\right) $ where all $h_{j}$'s are holomorphic.
	\end{lemma}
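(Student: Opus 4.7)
Since $\alpha \in \partial\mathfrak{D}_2 \setminus \Gamma_{\mathfrak{D}_2}$, exactly one of $|p_1(\alpha)|, |p_2(\alpha)|$ equals $1$; without loss of generality assume $|p_1(\alpha)| = 1$ and $|p_2(\alpha)| < 1$, and set $c := p_1(\alpha) \in \mathbb{T}$. The goal is to show that on the component of the leaf $L_c := \{z \in \cplx^2 : p_1(z) = c\}$ lying inside $\mathfrak{D}_2$ and containing $\alpha$, each pluriharmonic extension $h_j$ is holomorphic; this component is an open subset of the algebraic curve $L_c$, hence an analytic set in $\partial\mathfrak{D}_2 \setminus \Gamma_{\mathfrak{D}_2}$, which is exactly what the lemma asserts.

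The first step localizes the hull to the leaf via \Cref{R:Polycnvx_fiber}, applied with $F(z,w) := p_1(z) : \cplx^2 \times \cplx^N \to \cplx$ and $K := \Gr_h(\Gamma_{\mathfrak{D}_2})$. Since $F(K) \subset \mathbb{T}$ and $c$ is a peak point for $\poly(\mathbb{T})$,
\[
\widehat{\Gr_h(\Gamma_{\mathfrak{D}_2})} \cap F^{-1}(c) \;=\; \widehat{\Gr_h(L_c \cap \Gamma_{\mathfrak{D}_2})},
\]
and combining with the hypothesis $(\alpha,h(\alpha)) \in \widehat{\Gr_h(\Gamma_{\mathfrak{D}_2})}$ yields $(\alpha, h(\alpha)) \in \widehat{\Gr_h(L_c \cap \Gamma_{\mathfrak{D}_2})} \setminus \Gr_h(L_c \cap \Gamma_{\mathfrak{D}_2})$, i.e.\ this restricted graph is not polynomially convex.

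The second step reduces to a one-variable Wermer problem. Since $L_c$ is a simply connected algebraic curve, \Cref{Rmk:Abhy_Sukuki} supplies polynomial coordinates $(z_1^*, z_2^*)$ on $\cplx^2$ in which $L_c = \{z_1^* = 0\}$, parametrized linearly by $w \mapsto (0,w)$; in these coordinates $\Omega_c := L_c \cap \mathfrak{D}_2 \cong \{w \in \cplx : |q(w)| < 1\}$ with $q(w) := p_2^*(0,w)$, and the complex non-degeneracy $dp_1 \wedge dp_2 \ne 0$ on $\Gamma_{\mathfrak{D}_2}$ translates into $q'(w) \ne 0$ on $\{|q(w)| = 1\}$. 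Combined with the single-component statement in the remark following \Cref{T:PPolyhrn_DistingBdry}, the component $U$ of $\{|q| < 1\}$ containing the image $\alpha_2^*$ of $\alpha$ is a bounded simply connected domain with smooth Jordan boundary. Because the polynomial $z_1^*$ vanishes on $\Gr_h(L_c \cap \Gamma_{\mathfrak{D}_2})$, the hull lies in $\{z_1^* = 0\} \times \cplx^N$, which we identify with $\cplx^{N+1}$; the graph becomes $\mathcal{G} = \{(w,\tilde h(w)) : w \in \partial U\}$ where $\tilde h_j(w) := h_j(0,w)$ is continuous on $\overline U$ and harmonic on $U$ (being a pluriharmonic function restricted to a one-dimensional complex submanifold), and the non-triviality of the hull survives the restriction.

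The final step applies Wermer's maximality theorem on the simply connected $U$. By Mergelyan's theorem on $\overline U$, $[w;\partial U] = A(U)|_{\partial U}$, so $[w,\tilde h; \partial U] \supset A(U)|_{\partial U}$; by Wermer's maximality theorem --- transferred from the disc to $U$ via the Riemann map --- any uniform algebra on $\partial U$ lying strictly between $A(U)|_{\partial U}$ and $\smoo(\partial U)$ must coincide with one of the endpoints. Since $\mathcal{G}$ is not polynomially convex, $[w,\tilde h; \partial U]$ is a proper subalgebra of $\smoo(\partial U)$, and so $[w,\tilde h; \partial U] = A(U)|_{\partial U}$. Hence each $\tilde h_j|_{\partial U}$ is the boundary value of a function in $A(U)$; since $\tilde h_j$ is harmonic on $U$ with those very boundary values, uniqueness of the Dirichlet solution forces $\tilde h_j$ itself to be holomorphic on $U$. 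Translating back gives that each $h_j$ is holomorphic on the component of $\Omega_c$ containing $\alpha$, providing the desired analytic set. The main obstacle is the second step: one must verify, from the simple connectedness of the full algebraic leaf $L_c$ together with the one-component remark in the paper and the non-vanishing of $q'$ on $\{|q|=1\}$ coming from complex non-degeneracy, that the planar component $U$ is actually simply connected with a Jordan boundary; without this the reduction to the classical one-variable Wermer theorem cannot be carried out cleanly.
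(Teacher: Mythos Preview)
Your overall strategy coincides with the paper's: localize the hull from $\Gamma_{\mathfrak{D}_2}$ to the boundary leaf $L_c=\{p_1=c\}$, identify $L_c$ with $\cplx$ via Abhyankar--Moh--Suzuki, and invoke Wermer's maximality theorem on a simply connected planar domain. Your use of \Cref{R:Polycnvx_fiber} for the localization is a clean alternative to the paper's explicit peaking polynomial $\psi(z)=\tfrac12\bigl(1+p_1(z)\overline{p_1(\alpha)}\bigr)$; the two arguments are essentially equivalent, and the paper in fact uses \Cref{R:Polycnvx_fiber} in later lemmas.

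There is, however, a genuine gap in your second step. You write that $\tilde h_j$ is harmonic on $U$ ``being a pluriharmonic function restricted to a one-dimensional complex submanifold,'' but the leaf $L_c$ with $|c|=1$ lies entirely in $\partial\mathfrak{D}_2$, \emph{not} in $\mathfrak{D}_2$. The hypothesis only gives $h_j\in PH(\mathfrak{D}_2)\cap\smoo(\overline{\mathfrak{D}_2})$, so on $L_c$ the function $h_j$ is a priori merely continuous; restricting a continuous function to a complex curve says nothing about harmonicity. This is precisely the content of the paper's \Cref{L:Harmonic_bdry}, which uses the segment property (available here by complex non-degeneracy, \Cref{C:Segmnt_plyhdrn}) to push boundary discs into the interior and pass to a uniform limit of harmonic functions. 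Without this lemma the Wermer argument has no harmonic function to work with, and your proof does not go through.

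A secondary point: the remark you cite after \Cref{T:PPolyhrn_DistingBdry} says the algebraic curve $L_c$ is connected, not that the planar set $\{|q|<1\}$ is. The simple connectedness of each component of $\{|q|<1\}$ follows instead from the polynomial convexity of $\{|q|\le 1\}\subset\cplx$ (its complement is connected), which is the reason the paper gives. Your passage from the graph over $\{|q|=1\}$ to the graph over $\partial U$ also needs justification when there are several components; the paper finesses this by concluding only that the $h_j$ are holomorphic on \emph{some} component of $\triangle_\alpha$.
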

	\begin{proof}
		Since $\alpha\in \partial{\mathfrak{D}_{2}}-\Gamma_{\mathfrak{D}_{2}},$ without loss of generality, we assume that $\alpha\in \{z\in \cplx^2:|p_1(z)|=1,|p_2(z)|<1\}.$ We now consider the set
		\begin{align*}
			\triangle_{\alpha}&:=\{(z_1,z_2)\in \mathfrak{D}_{2}: p_1(z_1,z_2)=p_{1}(\alpha), |p_2(z_1,z_2)|<1 \}\subset\partial{\mathfrak{D}_{2}},\\
			\partial\triangle_{\alpha}&=\{(z_1,z_2)\in \mathfrak{D}_{2}:p_{1}(z_1,z_2)=p_{1}(\alpha),|p_2(z_1,z_2)|=1\}.
		\end{align*}
		
		\noindent We claim that $(\alpha,h(\alpha))\in \widehat{\\Gr_{h}(\partial\triangle_{\alpha})}:$ if not, then there exists a polynomial $\rho(z,w)$ in $\cplx^{2+N}$ such that $\rho(\alpha,h(\alpha))=1$ and $\sup_{\Gr_{h}({\partial\triangle_{\alpha}})}|\rho(z,w)|<1,$ i.e. $\sup_{\partial\triangle_{\alpha}}\left|\rho(z,h(z))\right|<1.$ Next, we consider the function $\psi(z_1,z_2)=\frac{1}{2}\left(1+p_{1}(z_1,z_2)\overline{p_{1}(\alpha)}\right).$ Note that 
		\begin{align*}
			\psi&=1 \text{ on } {\partial\triangle_{\alpha}}\\
			|\psi|&<1 \text{ on } \Gamma_{\mathfrak{D}_{2}}\setminus {\partial\triangle_{\alpha}}.
		\end{align*}
		\noindent In fact,
		\begin{align*}
			\psi(z_1,z_2)&=\frac{1}{2}\left(1+p_{1}(z_1,z_2)\overline{p_{1}(\alpha)}\right)=\frac{1}{2}\left(p_1(\alpha)\overline{p_1(\alpha)}+p_{1}(z_1,z_2)\overline{p_{1}(\alpha)}\right).
		\end{align*}
		\noindent This implies
		\begin{align*}
			|\psi(z_1,z_2)|&=\frac{1}{2}\left|p_{1}(\alpha)+p_{1}(z_1,z_2)\right|.
		\end{align*}	
		\noindent Let $\omega=p_{1}(\alpha)+p_{1}(z_1,z_2).$ We will find $\max |\omega|$ on $\Gamma_{\mathfrak{D}_{2}},$ i.e., we need to find $\max |\omega|$ on $|\omega-p_{1}(\alpha)|=|p_{1}(z)|=1.$ Let $\omega=x+iy~~\text{ and } p_{1}(\alpha)=\lambda_1+i\lambda_2.$ The equation of the straight line through the origin and through the centre of the circle $|\omega-p_{1}(\alpha)|=1$ is given by $y=\frac{\lambda_2}{\lambda_1}x.$ Solving $y=\frac{\lambda_2}{\lambda_1}x$ and $(x-\lambda_1)^2+(y-\lambda_2)^2=1,$ we get that $(x,y)=(0,0),~~\text{ or } (x,y)=(2\lambda_1,2\lambda_2).$ Therefore, $\max|\omega|=2$ and maximum attains at $\omega=2\lambda_1+i2\lambda_2=2p_1(\alpha).$ Then $\omega=p_{1}(\alpha)+p_{1}(z_1,z_2),$ implies $p_1(z)=p_1(\alpha).$ Therefore, $\max |\psi|=1$ on $\Gamma_{\mathfrak{D}_{2}}$	and since $|\omega|<1$ if $\omega\not =2p_{1}(\alpha),$ $|\psi|<1 \text{ on } \Gamma_{\mathfrak{D}_{2}}\setminus \partial\triangle_{\alpha}.$
		
		\noindent Finally we define the polynomial in $\cplx^{2+N}$
		\begin{align*}
			\chi(z,w):=\psi^{m}(z)\times\rho(z,w).
		\end{align*}
		Then 
		\begin{align*}
			\chi(\alpha,h(\alpha))&=1~~~ \text{ and }\\
			\sup_{\Gr_{h}\left(\Gamma_{\mathfrak{D}_{2}}\right)}|\chi(z,w)|&=\left|\chi(\eta,h(\eta))\right| \text{ for some } \eta\in \Gamma_{\mathfrak{D}_{2}}.
		\end{align*}
		If $\eta\in {\partial\triangle_{\alpha}},$ then $\left|\chi(\eta,h(\eta))\right|<1.$ If $\eta\notin {\partial\triangle_{\alpha}},$ since  $|\psi|<1$ on $\Gamma_{\mathfrak{D}_{2}}\setminus {\partial\triangle_{\alpha}},$ for sufficiently large $m$ we can make $\left|\chi(\eta,h(\eta))\right|<1.$ Therefore, $(\alpha,h(\alpha))\notin \widehat{\Gr_{h}(\Gamma_{\mathfrak{D}_{2}})}.$ This is a contradiction. Hence, our claim is true. This implies $\Gr_{h}(\partial\triangle_{\alpha})$ is not polynomially convex. Note that $\{z\in \cplx^2:p_{1}(z)=1\}$ is equivalent to $\cplx$ (by \Cref{Rmk:Abhy_Sukuki}), and since $\overline{\triangle_{\alpha}}$ is polynomially convex, each component of $\triangle_{\alpha}$ is simply connected. Also, by using \Cref{L:Harmonic_bdry}, we have that $h_{j}$'s are harmonic on $\triangle_{\alpha}.$ Then by applying the Wermer maximality theorem, we obtain that all $h_{j}$'s are holomorphic on some component of $\triangle_{a}$ and $\Gr_{h}(\triangle_{\alpha})\subset \widehat{\Gr_{h}(\Gamma_{\mathfrak{D}_{2}})}.$
	\end{proof}	
	
	\noindent From next, we assume that
	
	\begin{align}\label{E:Hull_bdry_empty}
		\widehat{\Gr_{h}(\Gamma_{\mathfrak{D}_{2}})}\cap \Gr_{h}\left(\partial{\mathfrak{D}_{2}}-\Gamma_{\mathfrak{D}_{2}}\right)=\emptyset.
	\end{align}
	
	We set 
	\begin{align}\label{E:Set_Graph_Complx_Tngnt}
		\widetilde{V}:=\{z\in \mathfrak{D}_{2}:\bar{\partial}{{h_{j_1}}}\wedge\bar{\partial}{{h_{j_2}}}(z)=0, \forall~ {j_1,j_2}, 1\le j_1,j_2\le N\}.
	\end{align}
	Because of \Cref{R:Coplx_Pt_Variety} and using the fact that the finite intersection of analytic varieties is again an analytic variety, we get that $\widetilde{V}$ is an analytic variety in $\mathfrak{D}_2$.
	
	\noindent By \Cref{R:Graph_Tot_Rl}, we get that $\Gr_{h}(\mathfrak{D}_{2}\setminus\widetilde{V})$ is totally real. Let $E=\Gr_{h}(\Gamma_{\mathfrak{D}_{2}})$ and $\Omega=(\mathfrak{D}_{2}\setminus\widetilde{V})\times\cplx^{N}.$ Then 
	\begin{align*}
		\widehat{E}\cap \Omega\subset\left(\Gr_{h}(\overline{\mathfrak{D}}_{2})\cap(\mathfrak{D}_{2}\setminus\widetilde{V})\times\cplx^{N}\right)\subset \Gr_{h}(\mathfrak{D}_{2}\setminus \widetilde{V}).
	\end{align*}
	
	\noindent Since $\Gr_{h}(\mathfrak{D}_{2}\setminus \widetilde{V})$ is totally real, by \Cref{L:Tot_rlpt_Pk pt}, we get that $\widehat{E}\cap \Omega=\emptyset.$ Hence
	\begin{align}\label{E:Hull_Tot_Rl_empty}
		\widehat{\Gr_{h}(\Gamma_{\mathfrak{D}_2})}\cap\Gr_{h}(\mathfrak{D}_{2}\setminus\widetilde{V})\subset \widehat{\Gr_{h}(\Gamma_{\mathfrak{D}_2})}\cap(\mathfrak{D}_{2}\setminus\widetilde{V})\times \cplx^{N}=\emptyset.
	\end{align} 
	
	\noindent In view of (\ref{E:Hull_bdry_empty}) and (\ref{E:Hull_Tot_Rl_empty}), we get that
	\begin{align*}
		\widehat{\Gr_{h}(\Gamma_{\mathfrak{D}_{2}})}\subset \Gr_{h}(\Gamma_{\mathfrak{D}_{2}}\cup\widetilde{V}),
	\end{align*}
	i.e.
	\begin{align}\label{E:Hull_disbndry}
		\widehat{\Gr_{h}(\Gamma_{\mathfrak{D}_{2}})}\setminus \Gr_{h}(\Gamma_{\mathfrak{D}_{2}}) \subset \Gr_{h}(\widetilde{V}).
	\end{align}
	\noindent  Note that $\widetilde{V}$ can not be discrete. Therefore, two cases can occur: either $\widetilde{V}\ne \mathfrak{D}_{2}$ or $\widetilde{V}=\mathfrak{D}_{2}.$ First, we consider the case $\widetilde{V}\ne \mathfrak{D}_{2}.$

	\begin{lemma}\label{L:Lemma2}
		Assume that $\widetilde{V}\not =\mathfrak{D}_{2}.$	Let $V$ be an irreducible component of $\widetilde{V}$ and let $\alpha\in V\cap (\widetilde{V})_{\text{reg}}$ with $\Gr_{h}(\alpha)\in \widehat{\Gr_{h}(\Gamma_{\mathfrak{D}_{2}})}.$ Then $\Gr_{h}(V)\subset \widehat{\Gr_{h}(\Gamma_{\mathfrak{D}_{2}})}$ and all $h_{j}$'s are holomorphic on $V.$
	\end{lemma}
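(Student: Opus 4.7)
Plan: The goal is two-fold—to show $\Gr_h(V)\subset \widehat{\Gr_h(\Gamma_{\mathfrak{D}_2})}$ and that each $h_j$ is holomorphic on $V$. I will prove holomorphy first by a Rossi-type local analysis and a total-reality dichotomy, and then spread the hull-containment by a connectedness argument.

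Since $\mathfrak{D}_2$ is simply connected, \Cref{L:Plurihrmonic_Cnjgte} lets me write $h_j=\Psi_j+\rl \Phi_j$ with $\Psi_j,\Phi_j\in \hol(\mathfrak{D}_2)$. Conjugation shows that $\bar\partial h_{j_1}\wedge\bar\partial h_{j_2}=0$ is equivalent to $\partial\Phi_{j_1}\wedge\partial\Phi_{j_2}=0$, so on $\widetilde V$ the holomorphic $1$-forms $d\Phi_1,\dots,d\Phi_N$ are all proportional. A direct computation of the real differential of $\Gr_h|_V$ at a regular point $z'\in V\cap(\widetilde V)_{\text{reg}}$, for a nonzero $v\in T_{z'}V$, gives
\[
i\cdot d\Gr_h(v)-d\Gr_h(iv)=\bigl(0,\,i\,\overline{d\Phi(v)}\bigr),
\]
so the real tangent plane $T_{\Gr_h(z')}\Gr_h(V)$ is a complex subspace of $\cplx^{2+N}$ iff $d\Phi_j|_{T_{z'}V}=0$ for every $j$; otherwise $\Gr_h(V)$ is totally real at $\Gr_h(z')$.

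Now I invoke \Cref{R:Rossi}: for any small neighborhood $N$ of $\Gr_h(\alpha)$ disjoint from $\Gr_h(\Gamma_{\mathfrak{D}_2})$, $\widehat{\Gr_h(\Gamma_{\mathfrak{D}_2})}\cap bN$ must be non-empty, so $\Gr_h(\alpha)$ is an accumulation point of the hull. By (\ref{E:Hull_disbndry}) and the fact that $\widetilde V=V$ in a neighborhood of the regular point $\alpha$, these accumulating points are of the form $\Gr_h(z'_n)$ for $z'_n\in V\cap (\widetilde V)_{\text{reg}}$ with $z'_n\to\alpha$. Were the tangent to $\Gr_h(V)$ at $\Gr_h(z'_n)$ totally real, then in a small neighborhood $N'$ of $\Gr_h(z'_n)$ the hull would sit inside a totally real submanifold of $\cplx^{2+N}$; a further application of \Cref{R:Rossi} to $\Gr_h(z'_n)$ together with \Cref{L:Tot_rlpt_Pk pt} (with $X=\widehat{\Gr_h(\Gamma_{\mathfrak{D}_2})}\cap bN'$ and $\Omega=N'$) would force $\Gr_h(z'_n)\notin \widehat{\Gr_h(\Gamma_{\mathfrak{D}_2})}$, a contradiction. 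Hence $d\Phi_j|_{T_{z'_n}V}=0$ for every $j$ and $n$. Since $d\Phi_j|_{TV}$ is a holomorphic section on the connected (by irreducibility) manifold $V_{\text{reg}}$ vanishing on a sequence accumulating at $\alpha\in V_{\text{reg}}$, the identity theorem gives $d\Phi_j|_{TV}\equiv 0$, so each $\Phi_j|_V$ is constant and $h_j|_V=\Psi_j|_V+\mathrm{const}$ is holomorphic on $V$.

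With holomorphy established, $\Gr_h|_V$ is a holomorphic embedding, making $\Gr_h(V)$ a one-dimensional analytic subvariety of $\cplx^{2+N}$. To conclude $\Gr_h(V)\subset \widehat{\Gr_h(\Gamma_{\mathfrak{D}_2})}$, set $T:=\{z\in V:\Gr_h(z)\in \widehat{\Gr_h(\Gamma_{\mathfrak{D}_2})}\}$; this is nonempty and closed in $V$. Openness at any $z_0\in T$ is obtained by repeating the Rossi analysis at $\Gr_h(z_0)$: nearby hull points lie in the analytic disc $\Gr_h(V)\cap N$, and by the local structure of polynomial hulls on analytic varieties the hull must fill an entire sub-disc through $\Gr_h(z_0)$, yielding a whole neighborhood of $z_0$ in $T$. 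Connectedness of $V_{\text{reg}}$ then forces $T=V$. The main obstacle is the rigorous execution of the total-reality dichotomy—controlling the size of $N$ so that $\widetilde V=V$ throughout (delicate in the presence of other irreducible components of $\widetilde V$ meeting $V$ at singular points) and correctly invoking \Cref{L:Tot_rlpt_Pk pt} in the local hull; the rest is bookkeeping.
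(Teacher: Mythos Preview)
Your proof is essentially correct, but it takes a genuinely different route from the paper. The paper's argument is purely function-theoretic: it chooses a small disc $D_\alpha\subset V$ around $\alpha$, uses \Cref{R:Rossi} to show $\Gr_h(\alpha)\in\widehat{\Gr_h(\partial D_\alpha)}$, and then invokes Wermer's maximality theorem on the one-variable disc $D_\alpha$ to conclude directly that every $h_j$ is holomorphic there. A further argument shows the hull must contain the full $\Gr_h(\partial D_\alpha)$ (else it would be polynomially convex), whence holomorphy and the maximum principle give $\Gr_h(\overline{D_\alpha})\subset\widehat{\Gr_h(\Gamma_{\mathfrak{D}_2})}$; propagation over $V_{\text{reg}}$ finishes. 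Your approach instead analyzes the real tangent plane of $\Gr_h(V)$, exploits the complex-line/totally-real dichotomy in real dimension two, and combines \Cref{L:Tot_rlpt_Pk pt} with the identity theorem for the holomorphic sections $d\Phi_j|_{TV}$ to force holomorphy globally before addressing hull-containment.

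What each buys: the paper's Wermer-maximality step is shorter and avoids any tangent-space computation, yielding holomorphy and hull-filling simultaneously on each local disc. Your approach is more geometric and makes the obstruction (nonvanishing of $d\Phi_j|_{TV}$) explicit; however, a couple of points deserve tightening. First, the detour through accumulating points $z'_n$ and a ``further application of Rossi'' is unnecessary for the total-reality contradiction---you can apply \Cref{L:Tot_rlpt_Pk pt} directly with $X=\Gr_h(\Gamma_{\mathfrak{D}_2})$ and $\Omega$ a small neighborhood of $\Gr_h(\alpha)$ in which $\Gr_h(V)$ is totally real; the sequence $z'_n$ is only needed to feed the identity theorem, and that you obtain just from the non-isolated nature of hull points. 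Second, your openness argument for $T$ is exactly the paper's disc-filling step in disguise (a proper compact subset of the circle $\Gr_h(\partial D_{z_0})$ would be polynomially convex, contradicting $\Gr_h(z_0)\in\widehat{\widehat E\cap bN}$), and should be stated that way rather than appealing vaguely to ``local structure of polynomial hulls on analytic varieties.''
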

	\begin{proof}
		We choose a neighborhood $U$ of $\alpha$ in $V$ such that $U\cap V=:D_{\alpha}$ is a smoothly bounded disc containing $\alpha.$ Let us define $X:=\widehat{\Gr_{h}(\Gamma_{\mathfrak{D}_{2}})}\cap(\partial U\times\cplx^{N}).$ Note that $X$ is a closed subset of a compact set $\widehat{\Gr_{h}(\Gamma_{\mathfrak{D}_{2}})},$ and hence $X$ is compact. Since $\widehat{\Gr_{h}(\Gamma_{\mathfrak{D}_{2}})}\subset \Gr_{h}(\Gamma_{\mathfrak{D}_{2}}\cup \widetilde{V}),$ we have 
		\begin{align*}
			X\subset \Gr_{h}(\Gamma_{\mathfrak{D}_{2}}\cup \widetilde{V})\cap (\partial U\times\cplx^{N})\subset \Gr_{h}(\partial U\cap V).
		\end{align*}

		We claim that		
		\begin{align*}
			\Gr_{h}(\alpha)\in \widehat{X}\subset \widehat{\Gr_{h}(\partial U\cap V)}= \widehat{\Gr_{h}(\partial D_{\alpha})}.
		\end{align*}
		
		\noindent To prove this claim, let $E=\Gr_{h}(\Gamma_{\mathfrak{D}_{2}})$ and $N=U\times\mathbb{B}(r)$ with $r>\sup_{\overline{U}}|h|.$ Note that $E\cap N=\emptyset$ and $\partial N\cap\widehat{E}=[(\partial U\times\mathbb{B}(r))\cap\widehat{E}]\cup [(U\times \partial\mathbb{B}(r))\cap\widehat{E}].$ We claim that $(U\times \partial\mathbb{B}(r))\cap\widehat{E}=\emptyset:$ if possible, let $(\eta,\xi)\in (U\times \partial\mathbb{B}(r))\cap\widehat{E}.$ Then $\xi=h(\eta)$ and $\eta\in U, |\xi|=r.$ But on $U,$ we have $|\xi|=|h(\eta)|<r.$ This is a contradiction to the assumption that $(U\times \partial\mathbb{B}(r))\cap\widehat{E}\not =\emptyset.$ Then by \Cref{R:Rossi}, we have
		\begin{align*}
			\Gr_{h}(\alpha)\in \widehat{\widehat{E}\cap \partial N}&=\widehat{[\widehat{\Gr_{h}(\Gamma_{\mathfrak{D}_{2}})}\cap (\partial U\times\mathbb{B}(r))]}\\
			&\subset \widehat{[\widehat{\Gr_{h}(\Gamma_{\mathfrak{D}_{2}})}\cap (\partial U\times\cplx^{N})]}=\widehat{X}\subset \widehat{\Gr_{h}(\partial U\cap V)}= \widehat{\Gr_{h}(\partial D_{\alpha})}.
		\end{align*}
		
		\noindent Therefore, $\widehat{\Gr_{h}(\partial D_{\alpha})}$ is not polynomially convex, and hence by the Wermer maximality theorem we can say that all $h_{j}$'s are holomorphic in $D_{\alpha}.$ Note that $X=\Gr_{h}(\partial D_{\alpha}).$ In fact, if $X$ is proper subset of $\Gr_{h}(\partial D_{\alpha}),$ then $X=\Gr_{h}(K)$ for some proper compact subset of $\partial D_{\alpha}$ and in this case $X$ will be polynomially convex. Hence $\Gr_{h}(\alpha)\in \widehat{X}.$ This is a contradiction and hence $X=\Gr_{h}(\partial D_{\alpha}).$ It is easy to see that $\Gr_{h}(\overline{D_{\alpha}})\subset \widehat{X}.$ To see this, let $(a,h(a))\in \Gr_{h}(\overline{D_{\alpha}}),$ and $P$ be any holomorphic polynomial in $\cplx^2\times \cplx^{N}.$ Then 
		\begin{align*}
			|P((a,h(a)))|\le& \sup_{\Gr_{h}(\overline{D_{\alpha}})}|P(z,w)|\\
			=&\sup_{(\partial D_{\alpha})}|P(z,h(z))|~ (\text{since } h \text{ is holomorphic on}~ D_{\alpha}).
		\end{align*}
		\noindent Hence $(a,h(a))\in \widehat{X}.$ Therefore, $\Gr_{h}(D_{\alpha})\subset \widehat{X}\subset \widehat{ \Gr_{h}(\Gamma_{\mathfrak{D}_{2}})}$ and this holds near every regular point. Since regular points are dense, it follows that if $\beta$ is a singular point of $V,$ we have  $\Gr_{h}(\beta)\in \widehat{\Gr_{h}(\Gamma_{\mathfrak{D}_{2}})}$ also true. Hence $\Gr_{h}(V)\subset\widehat{ \Gr_{h}(\Gamma_{\mathfrak{D}_{2}})},$ and in view of (\ref{E:Hull_bdry_empty}), we have that $\overline{V}\setminus V\subset \Gamma_{\mathfrak{D}_{2}}.$
	\end{proof}

	\noindent From now, we assume that $$\widetilde{V}=\mathfrak{D}_{2}.$$

	\noindent In view of \Cref{L:Plurihrmonic_Cnjgte}, we can say that there exist holomorphic functions $\Phi_{j}$ and $\Psi_{j}$ on $\mathfrak{D}_{2}$ such that 
	\begin{align}\label{E:Change h to u}
		\imag h_{j}=\rl \Psi_{j} \text{ and } u_{j}:=\rl \Phi_{j}=h_{j}-\Psi_{j}.
	\end{align}
	
	\noindent We use the following change of coordinates $\widetilde{\Psi}:\mathfrak{D}_2\times\cplx^{N}\mapsto\mathfrak{D}_2\times\cplx^{N}$ on $\mathfrak{D}_2\times \cplx^N$ defined by
	\begin{align*}
		\widetilde{\Psi}(z_1,z_2,w_{1},\cdots,w_{N})=(z_1,z_{2},w_{1}-\Psi_{1}(z),\cdots,w_{N}-\Psi_{N}(z)).
	\end{align*}
	We set $u=(u_1,u_2,\cdots,u_{N})$ and $\Psi(z)=\left(\Psi_1(z),\cdots,\Psi_N(z)\right).$ Let $K$ be any compact subset  of $\mathfrak{D}_2.$ Then  $\widetilde{\Psi}\left(\widehat{\Gr_{h}}(K)\right)=\widehat{\Gr_{u}(K)}.$ Therefore, we have the following: 
	\begin{lemma}\label{L:biholo_cordnte}
		Let $K$ be any compact subset  of $\mathfrak{D}_2.$ Then  $\widetilde{\Psi}\left(\widehat{\Gr_{h}}(K)\right)=\widehat{\Gr_{u}(K)}.$ Therefore, a point $\Gr_{h}(\alpha)\in \widehat{\Gr_{h}(K)}$ is equivalent to $\Gr_{u}(\alpha)\in \widehat{\Gr_{u}(K)}.$ 
	\end{lemma}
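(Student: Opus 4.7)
The plan is to exploit the fact that $\widetilde{\Psi}:\mathfrak{D}_{2}\times\cplx^{N}\to\mathfrak{D}_{2}\times\cplx^{N}$ is a biholomorphism (with holomorphic inverse $(z,w)\mapsto(z,w+\Psi(z))$) that sends $\Gr_{h}(A)$ to $\Gr_{u}(A)$ for every $A\subset\mathfrak{D}_{2}$, directly from $u_{j}=h_{j}-\Psi_{j}$. One cannot simply invoke biholomorphic invariance of polynomial hulls, because $\widetilde{\Psi}$ is not defined on all of $\cplx^{2+N}$. My strategy is first to confine both hulls to the domain of definition of $\widetilde{\Psi}$, and then to push polynomials through $\widetilde{\Psi}^{\pm 1}$ using an Oka--Weil approximation.

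First I would check that $\widehat{\Gr_{h}(K)}\subset\mathfrak{D}_{2}\times\cplx^{N}$ (and symmetrically for $\widehat{\Gr_{u}(K)}$). Since $K$ is compact in the polyhedron $\mathfrak{D}_{2}=\{|p_{1}|<1,|p_{2}|<1\}$, one has $\sup_{K}|p_{j}|<1$ for $j=1,2$; as $p_{j}$ is a polynomial, this supremum propagates to the hull, giving $\widehat{K}\subset\mathfrak{D}_{2}$. Composing with the polynomial projection $\pi:\cplx^{2+N}\to\cplx^{2}$ yields $\pi(\widehat{\Gr_{h}(K)})\subset\widehat{\pi(\Gr_{h}(K))}=\widehat{K}\subset\mathfrak{D}_{2}$, so the whole hull lies in $\mathfrak{D}_{2}\times\cplx^{N}$.

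Next, to establish $\widetilde{\Psi}(\widehat{\Gr_{h}(K)})\subset\widehat{\Gr_{u}(K)}$, fix $\xi\in\widehat{\Gr_{h}(K)}$ and an arbitrary polynomial $P$ on $\cplx^{2+N}$. The composition $P\circ\widetilde{\Psi}$ is holomorphic on the open set $\mathfrak{D}_{2}\times\cplx^{N}$, which is a neighborhood of the polynomially convex compact set $\widehat{\Gr_{h}(K)}$. By the Oka--Weil theorem there exist polynomials $Q_{n}$ on $\cplx^{2+N}$ with $Q_{n}\to P\circ\widetilde{\Psi}$ uniformly on $\widehat{\Gr_{h}(K)}$, and therefore
\begin{align*}
|P(\widetilde{\Psi}(\xi))|=\lim_{n\to\infty}|Q_{n}(\xi)|\le\lim_{n\to\infty}\sup_{\Gr_{h}(K)}|Q_{n}|=\sup_{\Gr_{h}(K)}|P\circ\widetilde{\Psi}|=\sup_{\Gr_{u}(K)}|P|,
\end{align*}
so $\widetilde{\Psi}(\xi)\in\widehat{\Gr_{u}(K)}$. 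Applying the same argument to $\widetilde{\Psi}^{-1}$ gives the reverse inclusion, yielding the claimed equality. The final equivalence in the lemma then follows from $\widetilde{\Psi}(\Gr_{h}(\alpha))=\Gr_{u}(\alpha)$. The only real subtlety is the domain-of-definition issue, which is controlled by the polynomial polyhedron structure at the outset; the rest is a routine Oka--Weil transfer.
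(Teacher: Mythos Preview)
Your argument is correct. The paper itself gives no proof of this lemma beyond the sentence preceding it, essentially treating the equality as an immediate consequence of the biholomorphic change of coordinates $\widetilde{\Psi}$; your proposal supplies precisely the Oka--Weil transfer that makes this rigorous, and in particular handles correctly the one genuine issue, namely that $\widetilde{\Psi}$ is only defined on $\mathfrak{D}_{2}\times\cplx^{N}$, by first confining $\widehat{\Gr_{h}(K)}$ to this set via the polynomial bounds $\sup_{K}|p_{j}|<1$.
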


	\medskip

	By (\ref{E:Hull_bdry_empty}) and the fact that $\widetilde{V}=\mathfrak{D}_{2},$ we get that
	
	\begin{align*}
		\widehat{\Gr_{h}(\Gamma_{\mathfrak{D}_{2}})}\subset  \Gr_{h}\left(\Gamma_{\mathfrak{D}_{2}}\right)\cup \Gr_{h}\left({\mathfrak{D}_{2}}\right).
	\end{align*}
	Therefore $\widehat{\Gr_{h}(\Gamma_{\mathfrak{D}_{2}})}=\Gr_{h}(D),$ for some subset $D$ of $\overline{\mathfrak{D}_{2}}.$ Since polynomial convex hull of a compact set is compact, $D$ is also compact. Let $r_{j}$ be an increasing sequence of real numbers such that $r_{j}\to 1$ as $j\to \infty.$ We define
	\begin{align*}
		R^{1}_{j}&:=\min\left\{|p_{1}(z)|:\{|p_{1}(z)|\le r_j,|p_{2}(z)|=r_{j}\}\cap D\ne \emptyset\right\}\\
		R^{2}_{j}&:=\min\left\{|p_{2}(z)|:\{|p_{2}(z)|\le r_j,|p_{1}(z)|=r_{j}\}\cap D\ne \emptyset\right\}.
	\end{align*}
	\noindent Observe that $R^{1}_{j}=p_{1}(\xi_{j})$ for some $\xi_{j}\in D,$ and $\xi_{j}\to \Gamma_{\mathfrak{D}_{2}}$ as $j\to \infty.$ Therefore, $j\to \infty$ implies $R^{1}_{j}\to 1.$ Similarly, $R^{2}_{j}\to 1$ as $j\to \infty.$ Define a sequence of real numbers
	\begin{align*}
		\eps_j:=\max\left\{r_{j}-R^{1}_{j},r_j-R^{2}_{j}\right\}. 
	\end{align*} 
	Since $r_{j}-R^{1}_{j},r_{j}-R^{2}_{j}\to 0$ as $j\to \infty,$ therefore, $\eps_{j}\to 0$ as $j\to \infty.$ Define		
	\begin{align*}
		\mathfrak{Q}_{j}:=\{z\in \mathfrak{D}_{2}:|p_1|=r_j, r_j-\eps_{j}\le |p_2|\le r_{j}\}\cup \{z\in \mathfrak{D}_{2}:|p_2|=r_j, r_j-\eps_{j}\le |p_1|\le r_{j}\}.
	\end{align*} and
	\begin{align*}
		\overline{\mathfrak{D}^{r_j}_{2}}=\{z\in \mathfrak{D}_{2}:|p_2|\le r_j, |p_1|\le r_{j}\}.
	\end{align*}
	Then from the above construction, we obtained that
	\begin{align}\label{E:hull_Qj}
		\widehat{\Gr_{h}(\Gamma_{\mathfrak{D}_{2}})}\cap \Gr_{h}(\partial\mathfrak{D}^{r_j}_{2})\subset \Gr_{h}(\mathfrak{Q}_{j}).
	\end{align}
	
	\begin{lemma}\label{L:Lemma3}
		Assume that there is a subsequence $r_{j_k}$ of $r_j$ such that for each $k$ there exists a point $\Gr_{h}(a_k,b_k)\in\widehat{ \Gr_{h}(\mathfrak{Q}_{j_k})}$ with $|p_{1}(a_k,b_k)|=r_{j_k}$ and $|p_2(a_k,b_k)|<r_{j_k}-\eps_{j_k}.$ Then there exists a analytic set $\triangle$ and all $h_{j}$'s are holomorphic on $\triangle.$
	\end{lemma}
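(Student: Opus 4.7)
The plan is to adapt the peaking argument used in \Cref{L:Lemma1}, replacing $\Gamma_{\mathfrak{D}_{2}}$ by the shell $\mathfrak{Q}_{j_k}$, and then to apply Wermer's maximality theorem on the leaf $L_k := \{z \in \cplx^{2} : p_1(z) = p_1(a_k, b_k)\}$. Fix $k$; it is enough to exhibit one analytic disc on which all $h_j$'s are holomorphic.

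First, I would introduce the peaking function
\[
\psi_k(z) := \frac{1}{2 r_{j_k}^{2}}\Bigl(r_{j_k}^{2} + p_1(z)\,\overline{p_1(a_k, b_k)}\Bigr),
\]
which satisfies $\psi_k(a_k, b_k) = 1$ and $|\psi_k| \le 1$ on $\mathfrak{Q}_{j_k}$, with equality precisely when $p_1(z) = p_1(a_k, b_k)$, i.e.\ precisely on $T_k := L_k \cap \mathfrak{Q}_{j_k}$. Assuming $(a_k, b_k, h(a_k, b_k)) \notin \widehat{\Gr_h(T_k)}$ and separating by a polynomial $\rho$, the polynomial $\psi_k^{m}\cdot \rho$ for $m$ large would separate $(a_k, b_k, h(a_k, b_k))$ from $\Gr_h(\mathfrak{Q}_{j_k})$, contradicting the hypothesis. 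Hence $(a_k, b_k, h(a_k, b_k)) \in \widehat{\Gr_h(T_k)}$.

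Next, I would transfer the problem to the leaf. The boundary leaves $\{p_1 = c\}$ with $|c| = 1$ being simply connected carries over to the nearby leaves $|c| = r_{j_k}$, and hence $L_k$ is biholomorphic to $\cplx$ by \Cref{Rmk:Abhy_Sukuki}; moreover, polynomial hulls of compact subsets of $L_k \times \cplx^N$ computed in $\cplx^{2+N}$ coincide with the intrinsic ones computed in $L_k \times \cplx^{N} \cong \cplx^{N+1}$. Choose a global coordinate $\zeta$ on $L_k$ and set $\tilde p_2 := p_2|_{L_k}$, $\tilde h := h|_{L_k}$. Then $T_k$ corresponds to an annular region $A_k := \{\zeta : r_{j_k} - \eps_{j_k} \le |\tilde p_2(\zeta)| \le r_{j_k}\}$, the point $(a_k, b_k)$ corresponds to some $\zeta_k$ in the hole $\{|\tilde p_2| < r_{j_k} - \eps_{j_k}\}$, and $\tilde h$ is harmonic on a neighborhood of the closed disc $\overline{D_k} := \{|\tilde p_2| \le r_{j_k}\}$. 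The strategy is to reduce $(\zeta_k, \tilde h(\zeta_k)) \in \widehat{\Gr_{\tilde h}(A_k)}$ to $(\zeta_k, \tilde h(\zeta_k)) \in \widehat{\Gr_{\tilde h}(C_k)}$, where $C_k := \{|\tilde p_2| = r_{j_k}\}$ is the outer boundary circle, and then apply Wermer's classical maximality theorem on the disc $D_k$ to conclude that each $\tilde h_j$ is holomorphic on the component of $D_k$ through $\zeta_k$; this component is the desired analytic set $\triangle$.

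The principal obstacle is the last reduction, from the two-dimensional annulus $A_k$ to the one-dimensional outer circle $C_k$. The key tool is the harmonicity of $\tilde h$ on a neighborhood of $\overline{D_k}$: writing $\tilde h = F + \bar G$ for holomorphic $F, G$ (available on simply connected components of the neighborhood), the graph of $\tilde h$ over $A_k$ acquires a holomorphic structure, and a maximum-modulus argument then lets one push the non-triviality of the hull from $A_k$ onto $C_k$. Making this precise in the polynomial-polyhedron setting, where $\tilde p_2$ may be a polynomial of high degree on the leaf (so that $D_k$ may a priori have several components), is the technical heart of the lemma.
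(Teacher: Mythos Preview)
Your peaking reduction from $\mathfrak{Q}_{j_k}$ to the leaf-slice $T_k$ is correct and coincides with the paper's first step (the paper writes $\triangle_k$ for your $T_k$), and passing to a global coordinate on $L_k\cong\cplx$ is also fine. The genuine gap is in your resolution of what you correctly flag as the principal obstacle. Writing $\tilde h=F+\bar G$ and asserting that the graph ``acquires a holomorphic structure'' so that a ``maximum-modulus argument'' pushes the hull from the annulus $A_k$ onto the outer circle $C_k$ does not work: for a polynomial $P(\zeta,w)$ the restriction $P(\zeta,\bar G(\zeta))$ is \emph{not} holomorphic in $\zeta$, so no maximum principle forces its supremum over $A_k$ to be attained on $C_k$. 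Generically $\Gr_{\bar G}$ is totally real, not a complex curve, so there is no holomorphic structure to exploit in the way you suggest. Thus your route ``annulus $\to$ circle $\to$ Wermer'' is blocked at the first arrow.

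The paper does \emph{not} attempt the passage $A_k\to C_k$ before holomorphicity is known. Instead it uses the \emph{global} decomposition $h=\Psi+u$ on the simply connected domain $\mathfrak{D}_2$ (\Cref{L:Plurihrmonic_Cnjgte}) with $\Psi$ holomorphic and $u$ real-valued, and transfers via \Cref{L:biholo_cordnte} to $\Gr_u$. The point is that $u(\triangle_k)\subset\mathbb{R}^N$, so every value is a peak point; combined with the fact that $(a_k,b_k)$ sits in a hole of the annulus in $L_k\cong\cplx$, this forces $u$ to be constant---hence $h$ holomorphic---on a component of $\triangle_k$. Only \emph{after} this does maximum modulus (now legitimately applicable) push to the outer circle, and Wermer then upgrades holomorphicity to the full disc $\widetilde{\triangle_k}$. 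Finally, the paper lets $k\to\infty$ so that the discs $\widetilde{\triangle_k}\subset\{|p_1|=r_{j_k}\}$ converge to a disc $\widetilde\triangle\subset\{|p_1|=1,\ |p_2|\le 1\}\subset\partial\mathfrak{D}_2$; while a single $\widetilde{\triangle_k}$ already proves the lemma as literally stated, the boundary disc is what the main theorem actually needs.
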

	\begin{proof}
		Let $\xi\in \{|p_{1}|(z)=r_{j_k},r_{j_k}-\eps_{j_k}\le|p_{2}(z)|\le r_{j_k}\}\subset \mathfrak{Q}_{j_k}.$ Then $|p_{1}(\xi)|=r_{j_k}.$ Also $|p_{1}(a_k,b_k)|=r_{j_k}.$ Therefore, $p_{1}(\xi)=e^{i\theta_{0}}p_{1}(a_k,b_k)$ for some $\theta_{0}.$ For each $k,$ we consider the set
		\begin{align*}
			\triangle_{k}:=\{z:p_1(z)=e^{i\theta_{0}}p_1(a_k,b_k), r_{j_k}-\eps_{j_k}\le |p_2(z)|\le r_{j_k}\}.	
		\end{align*}

		\noindent We claim that $\Gr_{h}(a_k,b_k)\in\widehat{ \Gr_{h}(\triangle_{k})}:$ if not, then there exists a polynomial $\digamma_{k}$ such that 
		\begin{align*}
			\notag	\digamma_{k}((a_k,b_k),h(a_k,b_k))&=1 \text{ and }
			\sup_{\Gr_{h}(\triangle_{k})}|\digamma_{k}(z,w)|<1.
		\end{align*}
		
		\noindent This implies
		\begin{align}\label{E:Sup_annulus}
			\sup_{\triangle_{k}}|\digamma_{k}(z,h(z))|&<1.
		\end{align}

		\noindent Again, consider the polynomial 
		\begin{align}\label{E:Pk_pt_Anunulus}
			\Upsilon_{k}(z_1,z_2):=\frac{1}{2}\left[1+\frac{p_1(z_1,z_2)e^{-i\theta_{0}}\overline{p_{1}(a_k,b_k)}}{r^2_{j_k}}\right].
		\end{align}
		We observe that $\Upsilon_{k}=1 \text{ if } p_{1}(z)=e^{i\theta_{0}}p_1(a_k,b_k)$ and
		\begin{align*}
			|\Upsilon_{k}|&<1 \text{ if } p_{1}(z)\not =e^{i\theta_{0}}p_1(a_k,b_k)~~(\text{by calculations same as in the proof of \Cref{L:Lemma1}}).
		\end{align*}
		Finally, consider the polynomial
		\begin{align*}
			\kappa_{k}(z,w)=\digamma_{k}(z,w) \Upsilon^{m}_{k}(z)
		\end{align*}
		Hence $\kappa_{k}\left((a_k,b_k),h(a_k,b_k)\right)=1.$
		We claim that			
		\begin{align*}
			\sup_{\Gr_{h}(\mathfrak{Q}_{j})}|\kappa_{k}(z,w)|.
		\end{align*}
		To see a proof of the claim, let $\sup_{\Gr_{h}(\mathfrak{Q}_{j})}|\kappa_{k}(z,w)|=|\kappa_{k}(s,h(s))|,$ for some $s=(s_1,s_2)\in \mathfrak{Q}_{j}.$ If $s\in \triangle_{k},$ then from (\ref{E:Sup_annulus}) and (\ref{E:Pk_pt_Anunulus}), we get that $|\kappa_{k}(s,h(s))|<1.$ If $s\in \mathfrak{Q}_{j}\setminus \triangle_{k},$ then for large enough $m,$ we have that $|\kappa_{k}(s,h(s))|<1.$ 
		This is a contradiction. Hence $\Gr_{h}(a_k,b_k)\in\widehat{ \Gr_{h}(\triangle_{k})}.$ This implies $\Gr_{u}(a_k,b_k)\in\widehat{ \Gr_{u}(\triangle_{k})},$ where $u$ is given by \ref{E:Change h to u}. Note that $\{p_{1}(z)=e^{i\theta_{0}}p_{1}(a_k,b_{k})\}$ is equivalent to $\cplx$ (by \Cref{Rmk:Abhy_Sukuki}), and  $u_{j}$'s are harmonic on $\triangle_{k}.$ Hence $u$ is constant on some component of $\triangle_{k},$ i.e. all $h_{j}$'s are holomorphic on that component of $\triangle_{k}.$ We will denote this component again by $\triangle_{k}.$

		We now claim that $\Gr_{h}(\triangle_{k})\subset\widehat{ \Gr_{h}(\{p_1(z)=e^{i\theta_{0}}p_{1}(a_k,b_k),|p_{2}(z)|=r_{j_k}\})}.$ To prove this claim, let $(\xi,h(\xi))\in \Gr_{h}(\triangle_{k})$ for some $\xi\in \triangle_{k}.$ Then for any polynomial $P(z,w)$ in $\cplx^2,$
		\begin{align*}
			|P(\xi,h(\xi))|&\le \sup_{ \Gr_{h}(\triangle_{k})}|P(z,w)|\\
			&=\sup_{\triangle_{k}}|P(z,h(z))|\\
			&=\sup \{|P(z,h(z))|:\;p_1(z)=e^{i\theta_{0}}p_{1}(a_k,b_k), |p_{2}(z)|=r_{j_k}\} .
		\end{align*}
		The last equality holds because $h$ is holomorphic on $\triangle_{k}$.
		Therefore, 
		\[
		(\xi,h(\xi))\in\widehat{ \Gr_{h}(\{p_1(z)=e^{i\theta_{0}}p_{1}(a_k,b_k),|p_{2}(z)|=r_{j_k}\})}.
		\]
		This implies that $\Gr_{h}(\{p_1(z)=e^{i\theta_{0}}p_{1}(a_k,b_k),|p_{2}(z)|=r_{j_k}\})$ is not polynomially convex. Hence, $h$ is holomorphic on some simply connected component $\widetilde{\triangle_{k}}$ of $\{p_1(z)=e^{i\theta_{0}}p_{1}(a_k,b_k),|p_{2}(z)|\le r_{j_k}\}.$ That is, there exists biholomorphism $\phi_{k}:\mathbb{D}\to\widetilde{\triangle_{k}}$ such that $h_{j}\circ\phi_{k}:\mathbb{D}\to \cplx$ is holomorphic  $\forall j=1,\cdots,N;~ \forall k\ge 1.$ Again there exists a subsequence of $(a_k,b_k)$ which is convergent to some point $(a,b).$ Since $|p_{1}(a_k,b_k)|=r_{j_k}$ convergent to $1$ (for the above subsequence), then $p_{1}{(a_k,b_k)}$ converges to $p_{1}(a,b)$ and $|p_{1}(a,b)|=1.$ Hence $\phi_{k}(\mathbb{D})=\widetilde{\triangle_{k}}$ converges to some simply connected component $\widetilde{\triangle}$ of $\{p_1(z)=p_{1}(a,b),|p_{2}(z)|\le 1\}.$ Therefore, there exists a biholomorphism $\phi:\mathbb{D}\to\widetilde{\triangle}$ such that $h_{j}\circ\phi:\mathbb{D}\to \cplx$ is holomorphic  $\forall j=1,\cdots,N.$
	\end{proof}
	
	Without loss of generality, we assume that $u_1$ is non-constant. For $c\in \mathbb{R},$ define $L_{c}=L_{c}(u_1):=\{z\in \mathfrak{D}_{2}:u_{1}(z)=c \}.$ Since $\rl \Phi_{1}=u_{1},$ we have  
	\begin{align*}
		L_{c}(u_1)=\cup_{r\in Img (\Phi_{1})} L_{c+i.r}(\Phi_{1})
	\end{align*}
	is the disjoint union of analytic (since $\Phi_{1}$ is holomorphic) sets, each of them is leaf of $L_c.$
	If $\Phi_{1}(z_{0})=c+ir,$ for some $c+ir\in Img(\Phi_{1}),$ then we denote $\mathfrak{L}_{z_0}:=\{z\in \mathfrak{D}_{2}:\Phi_{1}(z)=\Phi_{1}(z_0)\}$ is the leaf through $z_0.$

	\begin{lemma}\label{L:Lemma4}
		There exists a discrete set $W\subset\mathfrak{D}_{2}$ such that near any point $z_{0}\in \mathfrak{D}_{2}\setminus W,$ the set $\{\Phi_{1}(z)=\Phi_{1}(z_{0})\}$ is a smooth surface.
	\end{lemma}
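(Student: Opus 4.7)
My plan is to take $W$ to be (a suitable thinning of) the critical locus $W_{1}:=\{z\in\mathfrak{D}_{2}:\frac{\partial\Phi_{1}}{\partial z_{1}}(z)=\frac{\partial\Phi_{1}}{\partial z_{2}}(z)=0\}$ of $\Phi_{1}$, and argue that only discretely many points of $W_{1}$ genuinely cause trouble. Since $u_{1}=\rl\Phi_{1}$ is non-constant by the standing hypothesis, $\Phi_{1}$ itself is non-constant, so $W_{1}$ is a proper analytic subset of $\mathfrak{D}_{2}$ of complex dimension at most one. At any $z_{0}\in\mathfrak{D}_{2}\setminus W_{1}$ the implicit function theorem produces a neighborhood on which $\{\Phi_{1}=\Phi_{1}(z_{0})\}$ is a smooth complex curve, so the desired conclusion already holds off $W_{1}$.

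The remaining task is to analyze $W_{1}$. Zero-dimensional irreducible components of $W_{1}$ form a discrete set, and the singular locus $(W_{1})_{\mathrm{sing}}$ is a proper analytic subset of the one-dimensional $W_{1}$, hence also discrete. It therefore suffices to treat a smooth one-dimensional irreducible component $C$ of $W_{1}$. At a smooth point $z_{0}\in C$ I will choose local holomorphic coordinates $(x,y)$ centred at $z_{0}$ with $C=\{y=0\}$. Since $d\Phi_{1}$ vanishes identically along $C$, the function $\Phi_{1}$ is constant on $C$, say $\Phi_{1}|_{C}\equiv c$, and $\Phi_{1}-c$ vanishes on $\{y=0\}$. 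By Weierstrass-type divisibility for holomorphic functions I can factor
\[
\Phi_{1}(x,y)-c=y^{k}v(x,y),\qquad v(x,0)\not\equiv 0,
\]
for some $k\geq 1$; the condition that $\partial_{y}\Phi_{1}=y^{k-1}(kv+y\,\partial_{y}v)$ vanish on the whole of $\{y=0\}$ then forces $k\geq 2$.

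The payoff of this factorization is that on the open subset $\{(x,0)\in C:v(x,0)\neq 0\}$ the factor $v$ is a local unit, so the set-theoretic level set $\{\Phi_{1}=c\}=\{y^{k}v=0\}$ coincides with the smooth curve $\{y=0\}$ near such a point. The bad points along $C$ are therefore contained in the zero set of the nonzero holomorphic function $v(\,\cdot\,,0)$ on $C$, which is discrete. Putting everything together, I will set $W$ to be the union of the zero-dimensional components of $W_{1}$, the singular points of $W_{1}$, and these discrete bad sets along each smooth one-dimensional component. Local finiteness of the irreducible decomposition of $W_{1}$ inside $\mathfrak{D}_{2}$ ensures that $W$ meets every compact subset in a finite set, so $W$ is discrete. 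I expect the main technical obstacle to be the local Weierstrass analysis along the critical curve, and in particular the observation $k\geq 2$, which is what guarantees that a critical curve of $\Phi_{1}$ does not, by itself, introduce a singularity of the set-theoretic level set.
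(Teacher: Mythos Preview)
Your argument is correct, and in fact it is considerably more careful than the paper's own proof of this lemma. The paper's proof is a four-line sketch: it fixes a level set $Y=\{\Phi_{1}=\Phi_{1}(z_{0})\}$, declares $W$ to be the singular locus of $Y$, observes $\dim_{\mathbb C}W<\dim_{\mathbb C}Y=1$, and concludes that $W$ is discrete. Read literally, this produces a different $W$ for each level set and never explains why a single discrete $W$ works for all $z_{0}$ simultaneously. The genuine issue---which the paper does not address---is exactly the one you isolate: the critical locus $W_{1}=\{d\Phi_{1}=0\}$ can have one-dimensional components (e.g.\ $\Phi_{1}(z_{1},z_{2})=z_{1}^{2}$), so one cannot simply take $W=W_{1}$.

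Your Weierstrass-type analysis along a smooth one-dimensional critical component $C$ is precisely what is needed to close this gap: since $\Phi_{1}$ is constant ($\equiv c$) on $C$, the local factorization $\Phi_{1}-c=y^{k}v$ with $C=\{y=0\}$ shows that the \emph{reduced} level set $\{\Phi_{1}=c\}$ agrees with the smooth curve $\{y=0\}$ near any point where $v(\,\cdot\,,0)\neq 0$, leaving only a discrete bad set along $C$. Together with the zero-dimensional components of $W_{1}$ and $(W_{1})_{\mathrm{sing}}$, local finiteness of the irreducible decomposition then gives a globally discrete $W$, as you argue.

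One small remark: the inequality $k\geq 2$ that you highlight is a correct consequence of $C\subset W_{1}$, but it is not actually the step that delivers smoothness---for any $k\geq 1$, the set $\{y^{k}v=0\}$ is smooth near a point where $v\neq 0$. So the ``main technical obstacle'' is really the identification of the bad set as $\{v(\,\cdot\,,0)=0\}$, not the bound on $k$. This does not affect the validity of your proof.
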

	\begin{proof}
		Let $Y:= \{\Phi_{1}(z)=\Phi_{1}(z_{0})\}$ be a complex analytic variety of $\mathfrak{D}_2.$ Let $W$ be the collection of singular points of $Y.$ Being a singular set of a complex analytic variety $Y,$ $W$ is also a complex analytic variety with $\dim_{\cplx} W< \dim_{\cplx}Y=1.$ Hence $W$ is discrete. Therefore $Y\setminus W$ form a complex manifold of dimension one (by Implicit function theorem).
	\end{proof}

	\begin{lemma}\label{L:Lemma5}
		Assume that $\Gr_{h}(\alpha_{0})\in \widehat{ \Gr_{h}(\mathfrak{Q}_{j})}\cap \Gr_{h}(\mathfrak{D}^{r_j}_{2})$	and set $c_1=u_{1}(\alpha_{0}).$ Then there exists a point $z_{0}\in (L_{c_1}\setminus W)\cap\mathfrak{D}^{r_j}_{2}$ with $\Gr_{h}(z_{0})\in \widehat{ \Gr_{h}(\mathfrak{Q}_{j})}\cap \Gr_{h}(\mathfrak{D}^{r_j}_{2}).$
	\end{lemma}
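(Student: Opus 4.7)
The plan is a case analysis on whether $\alpha_0$ already lies outside the discrete set $W$. If $\alpha_0\notin W$, then $z_0:=\alpha_0$ works immediately: by construction $u_1(\alpha_0)=c_1$ places $\alpha_0$ in $L_{c_1}\setminus W$, and the hull condition is precisely the hypothesis. The substantive work is in the case $\alpha_0\in W$, where the strategy is to move from the singular leaf $\mathfrak{L}_{\alpha_0}$ through $\alpha_0$ to a nearby smooth leaf still sitting inside $L_{c_1}$ and carrying the hull property.

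For this case I would apply the fiber principle \Cref{R:Polycnvx_fiber} to the map $F(z,w):=\Phi_1(z)$ and the compact set $K:=\Gr_h(\mathfrak{Q}_j)$. The hypothesis of \Cref{R:Polycnvx_fiber} holds because $\overline{\mathfrak{D}_2^{r_j}}=\{|p_1|\le r_j,\ |p_2|\le r_j\}$ is a compact polynomial polyhedron strictly inside $\mathfrak{D}_2$, hence polynomially convex, so by Oka--Weil the holomorphic function $\Phi_1$ is a uniform limit of polynomials on $\overline{\mathfrak{D}_2^{r_j}}$, giving the required entire approximations of $F$ on $K$. Set $Y:=F(K)=\Phi_1(\mathfrak{Q}_j)\subset\cplx$. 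I would then choose $y^{*}\in Y$ satisfying three conditions simultaneously: (i) $\rl y^{*}=c_1$, which is possible because \Cref{R:hormander} applied to the plurisubharmonic functions $\pm(u_1-c_1)$ forces $c_1\in\rl(\widehat Y)$, and an intermediate-value argument on $\mathfrak{Q}_j$ then places $c_1\in\rl(Y)$; (ii) $y^{*}$ is a peak point of $\poly(Y)$, arranged by taking $y^{*}$ on the outer boundary of the planar compact $\widehat Y$ intersected with $\{\rl\zeta=c_1\}$, using the standard fact that such boundary points are peak points; (iii) $y^{*}$ is a regular value of $\Phi_1$, arranged because $\Phi_1(W)$ is at most countable (since $W$ is discrete by \Cref{L:Lemma4}) and can therefore be avoided on the one-real-dimensional line $\{\rl\zeta=c_1\}$.

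With such a $y^{*}$, \Cref{R:Polycnvx_fiber} yields $\widehat K\cap F^{-1}(y^{*})=\widehat{K\cap F^{-1}(y^{*})}$. Regularity of $y^{*}$ makes the leaf $\mathfrak{L}:=\{\Phi_1=y^{*}\}$ a smooth one-dimensional complex submanifold of $\mathfrak{D}_2$ wholly contained in $L_{c_1}\setminus W$. The real curve $\mathfrak{L}\cap\mathfrak{Q}_j$ bounds the nonempty relatively open piece $\mathfrak{L}\cap\mathfrak{D}_2^{r_j}$ of the Riemann surface $\mathfrak{L}$, and the one-variable maximum principle on $\mathfrak{L}$ lifts the graph of $h$ over this open piece into $\widehat{K\cap F^{-1}(y^{*})}$; any $z_0$ drawn from $\mathfrak{L}\cap\mathfrak{D}_2^{r_j}$ then satisfies all conclusions of the lemma. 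The hard part will be the simultaneous enforcement of conditions (i)--(iii) on $y^{*}$: it is precisely the discreteness of $W$ supplied by \Cref{L:Lemma4} that keeps $\Phi_1(W)$ countable and allows the one-dimensional constraint (i) to coexist with the genericity constraint (iii).
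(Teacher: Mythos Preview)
Your approach diverges substantially from the paper's and, as written, contains a genuine gap in the final step.

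The paper does not fiber over $\Phi_1$. Instead it passes to the real coordinates $u=(u_1,\dots,u_N)$ via \Cref{L:biholo_cordnte}, sets $\mathbf c:=u(\alpha_0)$, and applies \Cref{R:Polycnvx_fiber} with $F(z,w)=w-\mathbf c$. The crucial point is that $Y=F(\Gr_u(\mathfrak Q_j))\subset\mathbb R^N$, so \emph{every} point of $Y$---in particular $0$---is automatically a peak point for $\poly(Y)$, since $\poly(Y)=\smoo(Y)$ for compacta in $\mathbb R^N$. This yields $\Gr_u(\alpha_0)\in\widehat{\Gr_u(\mathfrak Q_j^{\mathbf c})}$ with $\mathfrak Q_j^{\mathbf c}:=\{z\in\mathfrak Q_j:u(z)=\mathbf c\}$, whence $\alpha_0\in\widehat{\mathfrak Q_j^{\mathbf c}}\subset L_{c_1}$. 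If $\widehat{\mathfrak Q_j^{\mathbf c}}\subset W$, discreteness of $W$ forces $\widehat{\mathfrak Q_j^{\mathbf c}}$ to be finite, hence equal to $\mathfrak Q_j^{\mathbf c}$, contradicting $\alpha_0\notin\mathfrak Q_j$. Any $z_0\in\widehat{\mathfrak Q_j^{\mathbf c}}\setminus W$ then works, and the verification that $\Gr_u(z_0)\in\widehat{\Gr_u(\mathfrak Q_j)}$ uses only that $u\equiv\mathbf c$ on $\widehat{\mathfrak Q_j^{\mathbf c}}$.

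Your argument breaks at the last step. Having isolated a leaf $\mathfrak L=\{\Phi_1=y^*\}$, you assert that ``the one-variable maximum principle on $\mathfrak L$ lifts the graph of $h$ over $\mathfrak L\cap\mathfrak D_2^{r_j}$ into $\widehat{K\cap F^{-1}(y^*)}=\widehat{\Gr_h(\mathfrak L\cap\mathfrak Q_j)}$.'' But $h|_{\mathfrak L}$ is merely \emph{harmonic}, not holomorphic---holomorphicity along leaves is the conclusion of the later \Cref{L:Lemma6}, not something available here. For a harmonic map the graph over an interior point need not lie in the hull of the graph over the boundary: already for $h(\zeta)=\bar\zeta$ on $\overline{\mathbb D}$, the set $\{(\zeta,\bar\zeta):|\zeta|=1\}$ is polynomially convex and does not contain $(0,0)=\Gr_h(0)$. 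Thus nothing guarantees that your proposed $z_0\in\mathfrak L\cap\mathfrak D_2^{r_j}$ satisfies $\Gr_h(z_0)\in\widehat{\Gr_h(\mathfrak Q_j)}$.

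There is also a softer issue with the simultaneous choice of $y^*$: the set of points of $\partial\widehat Y$ on the line $\{\rl\zeta=c_1\}$ on which you seek $y^*$ may well be finite, and there is no a priori reason those finitely many points avoid the countable set $\Phi_1(W)$ or are peak points of $\poly(Y)$. The paper sidesteps all of this by making the target $Y$ lie in $\mathbb R^N$, where peak points come for free and no regular-value gymnastics are needed.
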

	\begin{proof}
		Since $\mathfrak{Q}_{j}$ is a compact subset of $\mathfrak{D}_{2}$ and $\Gr_{h}(\alpha_{0})\in \widehat{ \Gr_{h}(\mathfrak{Q}_{j})}$, by \Cref{L:biholo_cordnte}, $\Gr_{u}(\alpha_{0})\in \widehat{ \Gr_{u}(\mathfrak{Q}_{j})}.$ Let us define
		\begin{align*}
			{\bf c}:=(c_1,\cdots,c_{N})=u(\alpha_{0}), \text{ and }\quad
			\mathfrak{Q}^{{\bf c}}_{j}:=\{z\in\mathfrak{Q}_{j}:u(z)={\bf c}\}.
		\end{align*}
		
		\noindent We consider the map $F:\cplx_{z}^{2}\times \cplx_{w}^{N}\to \cplx^{N}$ defined by 
		\begin{align*}
			F(z,w)=w-{\bf c}.
		\end{align*}
		Let $K:=\Gr_{u}(\mathfrak{Q}_{j}).$ Then, 
		\begin{itemize}
			\item $Y:=F(K)=\{u(z)-{\bf c}: z\in \mathfrak{Q}_{j} \}\subset\mathbb{R}^{N}$;
			\item $0\in Y,$ because $\alpha_{0}\in \mathfrak{Q}^{{\bf c}}_{j}\subset \mathfrak{Q}_{j}$;
			\item $F^{-1}\{0\}\cap K=\Gr_{u}(\mathfrak{Q}^{{\bf c}}_{j})$: let 
			\begin{align*}
				(z,w)\in F^{-1}\{0\}\cap K &\Longleftrightarrow F(z,w)=0 \text{ and } (z,w)\in \Gr_{u}(\mathfrak{Q}_{j})\\
				& \Longleftrightarrow w={\bf c} \text{ and } (w=u(z), z\in \mathfrak{Q}_{j})\\
				& \Longleftrightarrow (z,w)\in \Gr_{u}(\mathfrak{Q}^{{\bf c}}_{j}).
			\end{align*} 
		\end{itemize}
		\noindent Therefore, by \Cref{R:Polycnvx_fiber}, we obtain that
		\begin{align*}
			\widehat{\Gr_{u}(\mathfrak{Q}_{j})}\cap F^{-1}(0)&=\widehat{(\Gr_{u}(\mathfrak{Q}_{j})\cap F^{-1}\{0\}})=\widehat{\Gr_{u}(\mathfrak{Q}^{{\bf c}}_{j})}.
		\end{align*}
		\noindent Since $\Gr_{u}(\alpha_{0})\in \widehat{\Gr_{u}(\mathfrak{Q}_{j})}$ and $\Gr_{u}(\alpha_{0})\in F^{-1}\{0\},$ hence $\Gr_{u}(\alpha_{0})\in \widehat{\Gr_{u}(\mathfrak{Q}^{{\bf c}}_{j})}.$ 
		
		\noindent Now,
		\begin{align*}
			\mathfrak{Q}^{{\bf c}}_{j}\subset \widetilde{L_{{\bf c}}}:=\{z\in \overline{\mathfrak{D}^{r_j}_{2}}:u(z)={\bf c} \} 
		\end{align*} 
		and $\widetilde{L_{{\bf c}}}$ is polynomially convex. Therefore, 
		\begin{align*}
			\widehat{\mathfrak{Q}^{\bf c}_{j}}\subset \widehat{\widetilde{L_{\bf c}}}=\widetilde{L_{\bf c}}\subset L_{c_1}.
		\end{align*}
		\noindent	This implies 
		\begin{align*}
			\widehat{\Gr_{u}(\mathfrak{Q}^{\bf c}_{j})}\subset \widehat{\Gr_{u}(\widehat{\mathfrak{Q}^{\bf c}_{j}})}\subset \widehat{\mathfrak{Q}^{\bf c}_{j}}\times \cplx^{N}\subset L_{c_1}\times\cplx^{N}.
		\end{align*}
		\noindent Hence, $\Gr_{u}(\alpha_{0})\in \widehat{\Gr_{u}(\mathfrak{Q}^{\bf c}_{j})}$ implies
		\begin{align}\label{E:Lc_In_Lc1}
			\alpha_{0}\in \widehat{\mathfrak{Q}^{\bf c}_{j}}\subset\widetilde{ L_{\bf c}}\subset L_{c_1}.
		\end{align}
		If $\alpha_{0}\in L_{c_1}\setminus W,$ we are done. If $\widehat{\mathfrak{Q}^{\bf c}_{j}}$ contains only singular points and since singular points are isolated, $\widehat{\mathfrak{Q}^{\bf c}_{j}}$ is finite. Therefore, $\alpha_{0}\in\widehat{\mathfrak{Q}^{\bf c}_{j}}=\mathfrak{Q}^{\bf c}_{j},$ but we have taken $\alpha_{0}$ from $\mathfrak{D}^{r_j}_{2}.$ This is a contradiction. Therefore, $\widehat{\mathfrak{Q}^{\bf c}_{j}}$ contains at least one point $z_{0}\in L_{c_1}\setminus W.$
		
		\noindent We claim that $\Gr_{u}(z_{0})\in\widehat{ \Gr_{u}(\mathfrak{Q}_{j})}.$ To prove this claim, let $H(z,w)$ be a polynomial in $\cplx^{2+N}.$ Since $(z_0,u(z_0))\in \Gr_{u}(\widehat{\mathfrak{Q}^{\bf c}_{j}})$, then 
		\begin{align*}
			|H(z_0,u(z_0))|&\le\sup_{\Gr_{u}(\widehat{\mathfrak{Q}^{\bf c}_{j}})}|H(z,w)|\\
			&= \sup_{\widehat{\mathfrak{Q}^{\bf c}_{j}}}|H(z,u(z))|\\
			&=\sup_{\widehat{\mathfrak{Q}^{\bf c}_{j}}}|H(z,\bf c)|, \text{ since } \widehat{\mathfrak{Q}^{\bf c}_{j}}\subset \widetilde{ L_{\bf c}}\\
			&\le \sup_{\mathfrak{Q}^{\bf c}_{j}}|H(z,\bf c)|\\
			&=\sup_{\mathfrak{Q}^{\bf c}_{j}}|H(z,u(z))|, \text{ since } \mathfrak{Q}^{\bf c}_{j}\subset \widetilde{ L_{\bf c}}\\
			&\le \sup_{\mathfrak{Q}_{j}}|H(z,u(z))|, \text{ since } \mathfrak{Q}^{\bf c}_{j}\subset\mathfrak{Q}_{j}.
		\end{align*}
		This implies $(z_0,u(z_0))\in \widehat{\Gr_{u}(\mathfrak{Q}_{j})}.$ In view \Cref{L:biholo_cordnte}, we have that $(z,h(z))\in \Gr_{h}(\widehat{\mathfrak{Q}_{j}})$ which completes the proof. 
	\end{proof}

	\begin{lemma}\label{L:Lemma6}
		For any point $z_{0}\in L_{c_1}\setminus W,$ we have that $\Gr_{h}(z_{0})\in \widehat{\Gr_{h}(\mathfrak{Q}_{j})}$ implies that $\Gr_{h}(\mathfrak{L}_{z_0}\cap \mathfrak{D}^{r_j}_{2})\subset \widehat{ \Gr_{h}(\mathfrak{Q}_{j})}$ with $h$ holomorphic along $\mathfrak{L}_{z_0}$ where $\mathfrak{L}_{z_0}$ denotes the leaf through $z_{0}.$
	\end{lemma}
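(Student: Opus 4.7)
The plan is to adapt the Rossi--Wermer argument used in \Cref{L:Lemma2} to the present setting, with the extra ingredient being a plurisubharmonic computation that pins down the fiber of the hull over interior points. First, via \Cref{L:biholo_cordnte}, I would replace $h$ by $u=(u_1,\ldots,u_N)$ and reduce the statement to showing that each $\Phi_j$ is constant on $\mathfrak{L}_{z_0}$ (so that $h_j=\Psi_j+u_j$ is holomorphic along the leaf), and that the graph over $\mathfrak{L}_{z_0}\cap \mathfrak{D}_2^{r_j}$ lies in $\widehat{\Gr_u(\mathfrak{Q}_j)}$.

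The first key step is to establish $\widehat{\Gr_u(\mathfrak{Q}_j)}\subset \Gr_u(\widehat{\mathfrak{Q}_j})\subset \Gr_u(\overline{\mathfrak{D}_2^{r_j}})$. The projection of the hull into $\cplx^2$ sits in the polynomially convex set $\overline{\mathfrak{D}_2^{r_j}}$, so by Oka--Weil each $\Phi_k$ is a uniform limit of polynomials on a Runge pseudoconvex neighborhood $U$ of $\overline{\mathfrak{D}_2^{r_j}}$ contained in $\mathfrak{D}_2$. The pluriharmonic functions $\pm\rl(w_k-\Phi_k(z))$ and $\pm\imag w_k$ are then plurisubharmonic on $U\times\cplx^N$ and vanish on $\Gr_u(\mathfrak{Q}_j)$ (since there $w=u(z)=\rl\Phi(z)\in\mathbb{R}^N$); \Cref{R:hormander} forces them to remain $\le 0$ on the hull, so $w=u(z)$ on $\widehat{\Gr_u(\mathfrak{Q}_j)}$.

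Next, using that $z_0\in\mathfrak{L}_{z_0}\setminus W$ is a regular point in the interior $\mathfrak{D}_2^{r_j}$, I would pick a smoothly bounded open disc $D\subset \mathfrak{L}_{z_0}\setminus W$ with $z_0$ in its interior and $\overline{D}\subset \mathfrak{L}_{z_0}\cap \mathfrak{D}_2^{r_j}$, so that $\overline{D}\cap \mathfrak{Q}_j=\emptyset$. Apply Rossi's theorem (\Cref{R:Rossi}) with $E=\Gr_u(\mathfrak{Q}_j)$ and $N=D\times \mathbb{B}(r)$ for $r>\max_k\sup_{\mathfrak{Q}_j}|u_k|$: hull points satisfy $|w_k|\le \sup_{\mathfrak{Q}_j}|u_k|<r$, so $D\times \partial\mathbb{B}(r)$ misses $\widehat{E}$, and by the preceding paragraph $\widehat{E}\cap (\partial D\times \cplx^N)\subset \Gr_u(\partial D)$. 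Hence $(z_0,u(z_0))\in \widehat{\Gr_u(\partial D)}$, and since $z_0\notin\partial D$ this graph is not polynomially convex. Parametrize $D$ by a biholomorphism $\phi:\mathbb{D}\to D$ and apply Wermer's maximality theorem to $[\phi_1,\phi_2,u_1\circ\phi,\ldots,u_N\circ\phi;\mathbb{T}]$: each $u_j\circ\phi$ must extend holomorphically to $\mathbb{D}$, but since $u_j\circ\phi$ is real-valued and harmonic, this extension is a real constant.

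Consequently $u_j$ is constant on $D$; as $u_j=\rl\Phi_j$ with $\Phi_j$ holomorphic, $\Phi_j$ is constant on $D$, and the identity theorem on the irreducible analytic set $\mathfrak{L}_{z_0}$ upgrades this to $\Phi_j$ being constant on the entire leaf. Therefore $h_j=\Psi_j+u_j=\Psi_j+c_j$ is holomorphic on $\mathfrak{L}_{z_0}$. For the final containment, I apply the maximum modulus principle to the holomorphic function $\zeta\mapsto P(\zeta,h(\zeta))$ on the one-dimensional analytic set $\mathfrak{L}_{z_0}\cap\overline{\mathfrak{D}_2^{r_j}}$: its maximum is attained on the analytic boundary $\mathfrak{L}_{z_0}\cap\partial \mathfrak{D}_2^{r_j}$, which under the standing assumption that we are outside the setting of \Cref{L:Lemma3} must be contained in $\mathfrak{Q}_j$, yielding $\Gr_h(\mathfrak{L}_{z_0}\cap \mathfrak{D}_2^{r_j})\subset \widehat{\Gr_h(\mathfrak{Q}_j)}$. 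I expect the main obstacle to be the plurisubharmonic fiber-pinning in the first step; this is the technical lemma that makes the Rossi output usable and enables the Wermer reduction on the disc.
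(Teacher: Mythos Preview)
Your Rossi step does not go through as written. You set $N=D\times\mathbb{B}(r)$ with $D$ a disc inside the one--dimensional leaf $\mathfrak{L}_{z_0}$, but \Cref{R:Rossi} requires $N$ to be an open neighborhood of $(z_0,u(z_0))$ in the ambient $\cplx^{2+N}$; since $D$ has complex dimension one in $\cplx^2$, your $N$ is not open. If instead you take an honest open set $U\subset\cplx^2$ around $z_0$, then your fiber--pinning step only gives $\widehat{E}\cap(\partial U\times\cplx^N)\subset \Gr_u(\partial U)$, a graph over a three--real--dimensional sphere, and you can no longer invoke Wermer's maximality theorem on a circle. What is missing is a prior confinement of the relevant hull to the level set $L_{c_1}$ (indeed to the single leaf $\mathfrak{L}_{z_0}$). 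The paper supplies exactly this: \Cref{L:Lemma5} uses the peak--point fiber result \Cref{R:Polycnvx_fiber} to pass from $\widehat{\Gr_u(\mathfrak{Q}_j)}$ to $\widehat{\mathfrak{Q}^{\bf c}_j}$ with $\widehat{\mathfrak{Q}^{\bf c}_j}\subset L_{c_1}$, and then \Cref{L:Lemma7} applies Rossi in $\cplx^2$ together with the holomorphic peaking function $z\mapsto e^{\Phi_1(z)^2}$ (which has modulus $\le 1$ on $L_{c_1}$ with equality only on $\mathfrak{L}_{z_0}$) to separate $\partial D(z_0)$ from the rest of $L_{c_1}\cap\partial U$. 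Your plurisubharmonic argument in the first step is correct and recovers \Cref{L:Poly_Cnvx} for $\mathfrak{D}_2^{r_j}$, but it does not substitute for this leaf--localization.

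Your final step also has a gap. You want the analytic boundary $\mathfrak{L}_{z_0}\cap\partial\mathfrak{D}_2^{r_j}$ to lie in $\mathfrak{Q}_j$ so that the maximum of $P(\cdot,h(\cdot))$ is controlled by $\sup_{\mathfrak{Q}_j}$, and you justify this by being ``outside the setting of \Cref{L:Lemma3}.'' But the negation of \Cref{L:Lemma3} only says that points \emph{in the hull} $\widehat{\Gr_h(\mathfrak{Q}_j)}$ which lie over $\{|p_1|=r_j\}$ satisfy $|p_2|\ge r_j-\eps_j$; it says nothing about arbitrary points of the leaf boundary, which you have not yet placed in the hull. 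The paper avoids this circularity: once \Cref{L:Lemma7} gives $\mathfrak{L}_{z_0}\subset\widehat{\mathfrak{Q}^{\bf c}_j}\subset\widetilde{L_{\bf c}}$, one has $u\equiv{\bf c}$ on the leaf, so for $z\in\mathfrak{L}_{z_0}$ and any polynomial $Q$,
\[
|Q(z,u(z))|=|Q(z,{\bf c})|\le \sup_{\widehat{\mathfrak{Q}^{\bf c}_j}}|Q(\cdot,{\bf c})|=\sup_{\mathfrak{Q}^{\bf c}_j}|Q(\cdot,u(\cdot))|\le \sup_{\mathfrak{Q}_j}|Q(\cdot,u(\cdot))|,
\]
which yields the containment directly without any appeal to the leaf boundary.
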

	
	First, we prove the following result.

	\begin{lemma}\label{L:Lemma7}
		Let $z_0\in (L_{c_1}\setminus W)\cap \mathfrak{D}^{r_j}_{2}$ and $\mathfrak{L}_{z_0}$ be the leaf through $z_0.$ Then $z_0\in\widehat{\mathfrak{Q}^{\bf c}_{j}}$ if and only if $\mathfrak{L}_{z_0}\subset \widehat{\mathfrak{Q}^{\bf c}_{j}}.$
	\end{lemma}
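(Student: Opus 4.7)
The forward direction is immediate, since $z_0 \in \mathfrak{L}_{z_0}$, so I focus on the converse. My plan is to attack it in three steps: first I would reduce to the statement that $u \equiv {\bf c}$ on the component of $\mathfrak{L}_{z_0}$ through $z_0$; then I would apply the fiber lemma (Result~\ref{R:Polycnvx_fiber}) to place $z_0$ into the hull of $\mathfrak{Q}^{{\bf c}}_j \cap \mathfrak{L}_{z_0}$; finally I would use the maximum principle on the $1$-dimensional analytic set $\mathfrak{L}_{z_0} \cap \overline{\mathfrak{D}^{r_j}_2}$ to spread the conclusion to the whole leaf.

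For the first step I would use the standing hypothesis $\widetilde V = \mathfrak{D}_2$. Writing $h_k = \Psi_k + \rl\Phi_k$ with $\Psi_k, \Phi_k \in \hol(\mathfrak{D}_2)$, a short calculation gives $\overline{\partial}h_{k_1}\wedge \overline{\partial}h_{k_2} = \tfrac{1}{4}\,\overline{d\Phi_{k_1}\wedge d\Phi_{k_2}}$, so $\widetilde V = \mathfrak{D}_2$ forces $d\Phi_{k_1}\wedge d\Phi_{k_2} \equiv 0$ for all pairs. Wherever $d\Phi_1 \ne 0$, each $d\Phi_k$ is thus a meromorphic multiple of $d\Phi_1$, so its restriction to the leaf $\{\Phi_1 = \Phi_1(z_0)\}$ vanishes; since $z_0 \notin W$ the leaf is smooth at $z_0$, and analytic continuation gives $\Phi_k \equiv \Phi_k(z_0)$ on the connected component $\mathfrak{L}^{*}_{z_0}$ of $\mathfrak{L}_{z_0}$ through $z_0$. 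Hence $u \equiv {\bf c}$ on $\mathfrak{L}^{*}_{z_0}$.

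For the second step I would set $K := \mathfrak{Q}^{{\bf c}}_j$ and $F := \Phi_1$. Since $\overline{\mathfrak{D}^{r_j}_2}$ is a polynomial polyhedron (in $p_1/r_j, p_2/r_j$), it is polynomially convex, so Oka--Weil approximates $\Phi_1$ uniformly on $K$ by polynomials, and Result~\ref{R:Polycnvx_fiber} applies. The image $Y := \Phi_1(K)$ lies on the real line $\{\omega \in \cplx : \rl\omega = c_1\}$ (because $u_1 \equiv c_1$ on $K$), so $Y$ is polynomially convex and every point of $Y$ is a peak point for $\poly(Y)$. From $z_0 \in \widehat K$ one has $y := \Phi_1(z_0) \in F(\widehat K) \subset \widehat Y = Y$, and Result~\ref{R:Polycnvx_fiber} yields
\[
    \widehat{\mathfrak{Q}^{{\bf c}}_j} \cap \mathfrak{L}_{z_0} \;=\; \widehat{\mathfrak{Q}^{{\bf c}}_j} \cap F^{-1}(y) \;=\; \widehat{\mathfrak{Q}^{{\bf c}}_j \cap F^{-1}(y)} \;=\; \widehat{\mathfrak{Q}^{{\bf c}}_j \cap \mathfrak{L}_{z_0}},
\]
and in particular $z_0 \in \widehat{\mathfrak{Q}^{{\bf c}}_j \cap \mathfrak{L}_{z_0}}$.

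The third step, which I expect to be the main obstacle, is to inflate this from the intersection to the whole leaf. Applying the maximum principle to polynomials restricted to the pure $1$-dimensional analytic set $\mathfrak{L}^{*}_{z_0} \cap \overline{\mathfrak{D}^{r_j}_2}$ gives $\sup_{\mathfrak{L}^{*}_{z_0} \cap \overline{\mathfrak{D}^{r_j}_2}} |P| = \sup_{\mathfrak{L}^{*}_{z_0} \cap \partial \mathfrak{D}^{r_j}_2} |P|$ for every polynomial $P$. By Step~1 the boundary on the right sits inside $\{u = {\bf c}\} \cap \partial \mathfrak{D}^{r_j}_2$; what is genuinely delicate is to show it also lies in $\mathfrak{Q}_j$, so that it is contained in $\mathfrak{Q}^{{\bf c}}_j$. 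This is precisely where the failure of Lemma~\ref{L:Lemma3} is used: in the base, that failure forbids any point of the projection of $\widehat{\Gr_h(\mathfrak{Q}_j)}$ from having $|p_1| = r_j$ together with $|p_2| < r_j - \eps_j$ (and symmetrically), and translating this into the purely geometric claim that the leaf exits $\overline{\mathfrak{D}^{r_j}_2}$ only through $\mathfrak{Q}_j$ is the crux. Once this is in hand, the maximum-principle estimate above gives $\mathfrak{L}^{*}_{z_0} \cap \overline{\mathfrak{D}^{r_j}_2} \subset \widehat{\mathfrak{Q}^{{\bf c}}_j}$, completing the proof.
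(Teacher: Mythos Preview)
Your Steps~1 and~2 are correct and are a clean alternative to what the paper does, but Step~3 contains a genuine gap that you yourself flag as ``the crux'' without resolving. The containment $\mathfrak{L}^{*}_{z_0}\cap\partial\mathfrak{D}^{r_j}_{2}\subset\mathfrak{Q}_{j}$ is never established, and the appeal to the failure of Lemma~\ref{L:Lemma3} is circular: that failure constrains only points $(a,b)$ whose graph already lies in $\widehat{\Gr_{h}(\mathfrak{Q}_{j})}$, whereas you are precisely trying to put leaf points into such a hull. There is no independent geometric reason why the level set $\{\Phi_{1}=\Phi_{1}(z_0)\}$ should exit $\overline{\mathfrak{D}^{r_j}_{2}}$ only through the thin shell $\mathfrak{Q}_{j}$; the definition of $\mathfrak{Q}_{j}$ is tailored to the set $D$ coming from $\widehat{\Gr_{h}(\Gamma_{\mathfrak{D}_{2}})}$, not to an arbitrary leaf. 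So the maximum-principle step, as written, does not close.

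The paper avoids this issue by arguing locally rather than globally, and it does not use Lemma~\ref{L:Lemma3} at all here. It takes a small neighbourhood $U$ of $z_0$ with $D(z_0):=\mathfrak{L}_{z_0}\cap U$ a disc, sets $X:=\widehat{\mathfrak{Q}^{\bf c}_{j}}\cap\partial U$, and applies Rossi's local maximum principle (Result~\ref{R:Rossi}) to get $z_0\in\widehat{X}$. The key new device is a peaking function that isolates the single leaf inside $L_{c_1}$: after translating so that $\Phi_{1}(z_0)=0$, one has $\Phi_{1}(L_{c_1})\subset i\mathbb{R}$, so the holomorphic function $z\mapsto e^{\Phi_{1}(z)^{2}}$ has modulus $1$ on $\mathfrak{L}_{z_0}$ and modulus strictly less than $1$ on $L_{c_1}\setminus\mathfrak{L}_{z_0}$. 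Since $X\subset\widehat{\mathfrak{Q}^{\bf c}_{j}}\subset\widetilde{L_{\bf c}}\subset L_{c_1}$ (this is (\ref{E:Lc_In_Lc1}) from Lemma~\ref{L:Lemma5}), multiplying by a large power of $e^{\Phi_{1}^{2}}$ forces $\partial D(z_0)\subset X$, whence $D(z_0)\subset\widehat{\partial D(z_0)}\subset\widehat{\mathfrak{Q}^{\bf c}_{j}}$. One then propagates along the leaf disc by disc. The point is that this argument never needs to know where the leaf meets $\partial\mathfrak{D}^{r_j}_{2}$; it only needs that the hull sits inside $L_{c_1}$ and that a single leaf can be peaked out of $L_{c_1}$ by a holomorphic function. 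Your fiber-lemma Step~2 could in fact replace the Rossi step to get $z_0\in\widehat{\mathfrak{Q}^{\bf c}_{j}\cap\mathfrak{L}_{z_0}}$ directly, but to go from one point to the whole leaf you still need a propagation mechanism like the paper's, not the global maximum-principle argument you propose.
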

	
	\begin{proof}
		If $\mathfrak{L}_{z_0}\subset \widehat{\mathfrak{Q}^{\bf c}_{j}},$ then $z_0\in\widehat{\mathfrak{Q}^{\bf c}_{j}}.$ We need to show only the other side. Let $U$ be a neighborhood of $z_{0}$ and $D(z_{0})=\mathfrak{L}_{z_0}\cap U$ is a smoothly bounded disc containing $z_0.$ Using a translation by $-\Phi_{1}(z_0),$ without loss of generality, we may assume that $\Phi_{1}(z_0)=0 \implies c_1=0.$ For $z\in L_{c_{1}}, u_{1}(z)+iv_1(z)=iv_{1}(z)\in i\mathbb{R}.$ Hence
		\begin{align}\label{E:Lc_minus_leaf}
			\Phi_{1}(L_{c_1})\subset i\mathbb{R} \text{ and } \Phi_{1}(L_{c_1}\setminus \mathfrak{L}_{z_0} )\subset i\mathbb{R}\setminus\{0\}.
		\end{align}
		
		\noindent Let $X:=\widehat{\mathfrak{Q}^{\bf c}_{j}}\cap \partial U.$ By \Cref{R:Rossi}, we get that $z_{0}\in \widehat{X}.$ We claim that $\partial D(z_0)=(\partial U\cap\mathfrak{L}_{z_0})\subset X.$ If not, consider the set $X_{z_0}:=\partial D(z_0)\cap X.$ Then $X_{z_0}$ is a proper subset of $\partial D(z_0).$ This implies $z_{0}\notin \widehat{X_{z_0}}.$ Then there exists a polynomial $\gamma$ such that 
		\begin{align*}
			\gamma(z_0)=1 ~~~\text{ and } \sup_{X_{z_0}}|\gamma|<1.
		\end{align*}
		\noindent Again, consider the function $\xi(t)=e^{t^{2}}$ from $\cplx$ to $\cplx.$ We consider the function
		\begin{align*}
			\phi(z_1,z_2)=\gamma(z_1,z_2)\times(\xi\circ \Phi_{1})^{d}(z_1,z_2),~ d\in \mathbb{N}.
		\end{align*}
		
		\noindent Then $\phi(z_0)=\gamma(z_0)\times(\xi\circ \Phi_{1}(z_0))^{d}=1.$ Now we show that $\sup_{X}|\phi|<1.$ Since $X$ is compact, $\sup_{X}|\phi|=|\phi(s)|$ for some $s\in X.$ Note that 
		\begin{align*}
			s\in X= \widehat{\mathfrak{Q}^{\bf c}_{j}}\cap \partial U&\subset L_{\bf c}\cap \partial U (\text{ by } (\ref{E:Lc_In_Lc1}))\\
			&\subset (L_{\bf c}\cap\bar{U}\setminus D(z_0))\cup (\mathfrak{L}_{z_0}\cap \partial U).
		\end{align*}
		
		\noindent  {\bf Case I} If $s\in \mathfrak{L}_{z_0}\cap \partial U$ and since $s\in X,$ therefore $s\in X_{z_0}=X\cap \partial D(z_0),$ then we have $|\gamma(s)|<1.$ Also, since $\Phi_{1}(s)\in i\mathbb{R},$ there exists $y\in \mathbb{R}$ such that $\Phi_{1}(s)=iy.$ Therefore, $(\xi\circ \Phi_{1}(s))=e^{-y^2}<1.$ This implies $|(\xi\circ \Phi_{1}(s))^{d}|\le 1.$  This contradicts that $z_{0}\in \widehat{X}.$
		
		\medskip
		
		\noindent {\bf Case II} If $s\in (L_{\bf c}\cap\bar{U}\setminus D(z_0))$ and $s\notin \mathfrak{L}_{z_0}\cap \partial U$ implies $s\notin \mathfrak{L}_{z_0},$ by (\ref{E:Lc_minus_leaf}), $|\xi\circ \Phi_{1}(s))|<1.$ Then for sufficiently large $d$, we have $|\gamma(s)|\times|\xi\circ \Phi_{1}(s))|^{d}<1.$ Again this is a contradiction to the fact that $z_{0}\in \widehat{X}.$ Therefore, $X_{z_0}=\partial D(z_0)\subset X.$ Hence, 
		\begin{align}\label{E:Disc_Presnt_Level_Set}
			D(z_0)\subset \widehat{\partial D(z_0)}\subset \widehat{X}\subset \widehat{\mathfrak{Q}^{\bf c}_{j}}\subset \widetilde{ L_{\bf c}}.
		\end{align}
		Proof of \Cref{L:Lemma7} follows as $\mathfrak{L}_{z_0}$ can be covered by such $D(z_0).$
	\end{proof}
	
	We now complete the proof of \Cref{L:Lemma6}.
	
	\begin{proof}[Proof of \Cref{L:Lemma6}]
		let $z_{0}$ be as in the hypothesis. Then by \Cref{L:Lemma5}, $z_{0}\in \widehat{\mathfrak{Q}^{\bf c}_{j}}.$ By \Cref{L:Lemma7}, $\mathfrak{L}_{z_0}\subset \widehat{\mathfrak{Q}^{\bf c}_{j}}.$ For $z\in \mathfrak{L}_{z_0}\cap \mathfrak{D}^{r_j}_{2},$ and for any polynomial $Q(z,w)$ we have:
		\begin{align*}
			|Q(z,u(z))|&\le\sup_{\Gr_{u}(\widehat{\mathfrak{Q}^{\bf c}_{j}})}|Q(z,w)|\\
			&=\sup_{\widehat{\mathfrak{Q}^{\bf c}_{j}}}|Q(z,{\bf c})|, \text{ since } \widehat{\mathfrak{Q}^{\bf c}_{j}}\subset \widetilde{L_{\bf c}}\\
			&= \sup_{\mathfrak{Q}^{\bf c}_{j}}|Q(z,{\bf c})|\\
			&=\sup_{\mathfrak{Q}^{\bf c}_{j}}|Q(z,u(z))|, \text{ since } \mathfrak{Q}^{\bf c}_{j}\subset \widetilde{L_{\bf c}}\\
			&\le\sup_{\mathfrak{Q}_{j}}|Q(z,u(z))|.
		\end{align*}
		Therefore, $(z,u(z))\in \Gr_{u}(\widehat{\mathfrak{Q}_{j}}).$ Hence $(z,h(z))\in \Gr_{h}(\widehat{\mathfrak{Q}_{j}}).$  From (\ref{E:Disc_Presnt_Level_Set}), we get that $u_2,\cdots,u_{N}$ are also constant on $D(z_0).$ Hence $h_{j}$'s are holomorphic on $D(z_0)=U\cap\mathfrak{L}_{z_0}$ which completes the proof of \Cref{L:Lemma6}.
	\end{proof}
	
	\medskip

	\begin{proof}[ Proof of \Cref{T:PPolyhrn_DistingBdry}]
		If there exists an analytic set $\triangle\subset\left(\partial{\mathfrak{D}_{2}}-\Gamma_{\mathfrak{D}_{2}}\right),$ where all $h_{j}$'s are holomorphic, then there is nothing to prove. We assume that no such analytic set exists where all $h_{j}$'s are holomorphic. Therefore, by \Cref{L:Lemma1}, we have $\widehat{\Gr_{h}(\Gamma_{\mathfrak{D}_{2}})}\cap \Gr_{h}\left(\partial{\mathfrak{D}_{2}}-\Gamma_{\mathfrak{D}_{2}}\right)=\emptyset.$ Recall the definition of $\widetilde{V}$ (see \ref{E:Set_Graph_Complx_Tngnt}):
		\begin{align*}
			\widetilde{V}:=\{z\in \mathfrak{D}_{2}:\bar{\partial}{{h_{j_1}}}\wedge\bar{\partial}{{h_{j_2}}}(z)=0, \forall~ {j_1,j_2}, 1\le j_1,j_2\le N\}.
		\end{align*}	
		\noindent Since, by assumption in the beginning of the proof,  $\Gr_{h}(\Gamma_{\mathfrak{D}_{2}})$ is not polynomially convex, by (\ref{E:Hull_disbndry}), there exist $\alpha_{0}\in\widetilde{V}$ such that $\Gr_{h}(\alpha_{0})\in\widehat{ \Gr_{h}(\Gamma_{\mathfrak{D}_{2}})}.$ 
		
		\medskip
		
		\noindent {\bf Case I:}
		Assume that $\widetilde{V}\not =\mathfrak{D}_{2}.$ Then \Cref{L:Lemma2} gives an irreducible component $V$ of $\widetilde{V}$ such that all $h_{j}$'s are holomorphic on $V.$ Observe that $\overline{V}\setminus V\subset \Gamma_{\mathfrak{D}_{2}}.$ The proof of the theorem is complete in this case in view of \Cref{L:Tornehave_PP}.
		
		\medskip
		
		\noindent{\bf Case II:} Now we consider the case that $\widetilde{V} =\mathfrak{D}_{2}.$ Without loss of generality, we assume that $h_1$ is non-holomorphic. Since $\Gr_{h}(\alpha_{0})\in\widehat{ \Gr_{h}(\Gamma_{\mathfrak{D}_{2}})},$ there exists $j_{0}\in \mathbb{N}$ such that $\alpha_{0}\in \mathfrak{D}^{r_j}_{2}$ for $j>j_{0}.$ Therefore, by (\ref{E:hull_Qj}) and \Cref{R:Rossi}, we obtain that $\Gr_{h}(\alpha_{0})\in \widehat{\Gr_{h}(\mathfrak{Q}_{j})}:$ in fact, assume that
		\begin{align}\label{E:K in Hull_Grph_DisBdry}
			K:=(\partial \mathfrak{D}^{r_j}_{2}\times\cplx^{N})\cap \widehat{ \Gr_{h}(\Gamma_{\mathfrak{D}_{2}})}.
		\end{align}
		\noindent Since $\widehat{\Gr_{h}(\Gamma_{\mathfrak{D}_{2}})}\subset \Gr_{h}(\Gamma_{\mathfrak{D}_{2}}\cup\mathfrak{D}_{2}),$ therefore, 
		\begin{align}\label{E:K lies_Bdry}
			K\subset \Gr_{h}(\Gamma_{\mathfrak{D}_{2}}\cup\mathfrak{D}_{2})\cap (\partial \mathfrak{D}^{r_j}_{2}\times\cplx^{N})=\Gr_{h}(\partial \mathfrak{D}^{r_j}_{2}).
		\end{align}
		Next we show that $\Gr_{h}(\alpha_{0})\in\widehat{K}.$ To show this we assume that $E=\Gr_{h}(\Gamma_{\mathfrak{D}_{2}})$ and $N=\mathfrak{D}^{r_j}_{2}\times\mathbb{B}(r)$ with $r>\sup_{\mathfrak{D}^{r_j}_{2}}|h|.$ Note that $E\cap N=\emptyset$ and $\partial N\cap\widehat{E}=(\partial\mathfrak{D}^{r_j}_{2}\times\mathbb{B}(r))\cap\widehat{E}\cup (\mathfrak{D}^{r_j}_{2}\times \partial\mathbb{B}(r))\cap\widehat{E}.$ Note that $(\mathfrak{D}^{r_j}_{2}\times \partial\mathbb{B}(r))\cap\widehat{E}=\emptyset.$ Then, by \Cref{R:Rossi}, we have
		\begin{align*}
			\Gr_{h}(\alpha_{0})\in \widehat{\widehat{E}\cap \partial N}&=\widehat{[\widehat{\Gr_{h}(\Gamma_{\mathfrak{D}_{2}})}\cap (\partial\mathfrak{D}^{r_j}_{2}\times\mathbb{B}(r))]}\\
			&\subset \widehat{[\widehat{\Gr_{h}(\Gamma_{\mathfrak{D}_{2}})}\cap (\partial\mathfrak{D}^{r_j}_{2}\times\cplx^{N})]}=\widehat{K}.
		\end{align*}
		\noindent Using (\ref{E:K in Hull_Grph_DisBdry}), (\ref{E:K lies_Bdry}) and(\ref{E:hull_Qj}), we obtain that
		\begin{align*}
			K\subset	\widehat{\Gr_{h}(\Gamma_{\mathfrak{D}_{2}})}\cap \Gr_{h}(\partial\mathfrak{D}^{r_j}_{2})\subset \Gr_{h}(\mathfrak{Q}_{j}).
		\end{align*}	
		
		\noindent Therefore,  $\Gr_{h}(\alpha_{0})\in \widehat{K}$ implies 
		\begin{align*}
			\Gr_{h}(\alpha_{0})\in  \widehat{\Gr_{h}(\mathfrak{Q}_{j})}.
		\end{align*}
		Therefore, we have $\Gr_{h}(\alpha_{0})\in \widehat{\Gr_{h}(\mathfrak{Q}_{j})}\cap \Gr_{h}(\mathfrak{D}^{r_j}_{2}).$ By \Cref{L:Lemma5}, we may assume that $\alpha_{0}\notin W.$ Let $V^{'}$ denotes the irreducible component of $\{z\in \mathfrak{D}_{2}: \Phi_{1}(z)=\Phi_{1}(\alpha_{0}) \}$ containing $\alpha_{0}.$ Then from \Cref{L:Lemma6}, we get that 
		\begin{align}\label{E:Graph_Varity_In_hull}
			\Gr_{h}(V^{'}\cap \mathfrak{D}^{r_j}_{2})\subset \widehat{ \Gr_{h}(\mathfrak{Q}_{j})} \text{ for all } j>j_0.
		\end{align}
		We now claim that $\Gr_{h}(V^{'})\subset \widehat{\Gr_{h}(\Gamma_{\mathfrak{D}_{2}})}.$ To prove this claim, assume that $(\alpha,h(\alpha))\in \Gr_{h}(V^{'}).$ Since $\alpha\in V^{'},$ $\alpha\in V^{'}\cap \mathfrak{D}^{r_{j_1}}_{2}$ for some $j_{1}.$ Therefore, $\alpha\in V^{'}\cap \mathfrak{D}^{r_{j}}_{2}$ for all $j>j_{1}.$ Using (\ref{E:Graph_Varity_In_hull}), for any polynomial $H$ in $\cplx^{N+2}$ and for $j>\max \{j_{0},j_{1}\},$ we get that
		
		\begin{align}\label{E:FromQj to Distinguishd bdry}
			|H(\alpha,h(\alpha))|\le \sup_{\mathfrak{Q}_{j}}|H(z,h(z))|=|H(a_{j},h(a_{j}))| 
		\end{align}
		for some $a_{j}\in \mathfrak{Q}_{j}.$ Taking limit $j\to \infty,$ we obtain that $|H(a_{j},h(a_{j}))|\to H(a,h(a))$ for some $a\in \Gamma_{\mathfrak{D}_{2}}.$ Hence from (\ref{E:FromQj to Distinguishd bdry}), we get that
		\begin{align*}
			|H(\alpha,h(\alpha))|\le \lim_{j\to \infty}|H(a_{j},h(a_{j}))| =|H(a,h(a))|\le \sup_{\Gamma_{\mathfrak{D}_{2}}} |H(z,h(z))|.
		\end{align*}
		Since $H$ is arbitrary, then we can say that $(\alpha,h(\alpha))\in \widehat{\Gr_{h}(\Gamma_{\mathfrak{D}_{2}})}.$ This proves our claim. Note that $\overline{V^{'}}\setminus V^{'}\subset \Gamma_{\mathfrak{D}_{2}}.$ The proof of the theorem is complete in this case in view of \Cref{L:Tornehave_PP}.
	\end{proof}

	\section{Description of hull}\label{sec-polyhull}	
	
	We consider the proper polynomial map $\Psi:\cplx^2\to \cplx^2$ defined by $\Psi(z)=$ $(p_1(z),p_{2}(z)).$ Then the distinguished boundary $\Gamma_{\mathfrak{D}_{2}}$ of the polynomial polyhedron $\mathfrak{D}_{2}=\Psi^{-1}(\mathbb{D}^2)=
	\{z\in\cplx^2:|p_{1}(z)|\le 1,|p_{2}(z)|\le1\}$ is $\Gamma_{{\mathfrak{D}_{2}}}=\{z\in\cplx^2:|p_{1}(z)|= 1,|p_{2}(z)|=1\}.$ Let $P(z_1,z_2)$ be any holomorphic polynomial of degree $m$ in $z_1$ and of degree $n$ in $z_2.$ We wish to compute $\widehat{\Gr_{\overline{P\circ \Psi}}(\Gamma_{\mathfrak{D}_{2}})}$ under the assumption that $\mathfrak{D}_{2}$ is complex non-degenerate.
	
	\smallskip
	
	\par First, we state and prove a lemma that will be useful in this work. For a subset $X$ of $\cplx^{n},$ we denote 
	\begin{align*}
		\# CC(X)= \text{the number of connected components of } X.	
	\end{align*}
	\begin{lemma}\label{L:Nbr_Compnt_Hull}
		Let $X$ be a compact subset of $\cplx^{n}.$ Then $\# CC(\widehat{X})\le \# CC(X).$	
	\end{lemma}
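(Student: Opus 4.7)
The plan is to produce a surjection from the set of connected components of $X$ onto the set of connected components of $\widehat{X}$, and the key tool will be Alexander's result (\Cref{R:Component_Non_Empty}).

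First I would set up the natural map $\Phi\colon CC(X)\to CC(\widehat{X})$. Given a component $C$ of $X$, the set $C$ is connected and contained in $\widehat{X}$, so there is a unique component $E$ of $\widehat{X}$ with $C\subset E$. Define $\Phi(C):=E$. This step is immediate from basic point-set topology and requires nothing beyond $X\subset\widehat{X}$.

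Next I would verify that $\Phi$ is surjective. Let $E$ be an arbitrary component of $\widehat{X}$. By \Cref{R:Component_Non_Empty}, $E=\widehat{X\cap E}$. Since $E\neq\emptyset$ and $\widehat{\emptyset}=\emptyset$, we must have $X\cap E\neq\emptyset$. Pick any $x\in X\cap E$ and let $C$ be the connected component of $X$ containing $x$. Since $C$ is connected in $\widehat{X}$ and meets $E$, it must be entirely contained in $E$. Thus $\Phi(C)=E$, proving surjectivity.

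The existence of a surjection $CC(X)\twoheadrightarrow CC(\widehat{X})$ immediately yields $\#CC(\widehat{X})\le \#CC(X)$, completing the proof. The only non-trivial ingredient is Alexander's lemma guaranteeing $X\cap E\neq \emptyset$ for every component $E$ of $\widehat{X}$; without it one cannot rule out a component of the hull lying entirely in $\widehat{X}\setminus X$. Since that lemma is already stated in the excerpt as \Cref{R:Component_Non_Empty}, there is no substantive obstacle; the entire argument is short and purely topological.
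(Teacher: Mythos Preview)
Your proof is correct and relies on the same key ingredient as the paper, namely Alexander's result (\Cref{R:Component_Non_Empty}) to guarantee that every component $E$ of $\widehat{X}$ meets $X$. The paper's argument reaches the same conclusion by a case split: it first treats $\#CC(X)=\infty$ trivially, then rules out the possibility $\#CC(X)<\infty$ while $\#CC(\widehat{X})=\infty$ by contradiction, and finally, assuming both numbers are finite, observes that the sets $X_j\cap X$ (for $X_j$ the components of $\widehat{X}$) are disjoint, nonempty, and together decompose $X$, so their number $l$ cannot exceed $\#CC(X)$. Your formulation via an explicit surjection $\Phi\colon CC(X)\to CC(\widehat{X})$ is a cleaner packaging of exactly this idea, handling all cardinalities uniformly and avoiding the separate treatment of the infinite case; conversely, the paper's version makes the partition $X=\bigcup_j(X_j\cap X)$ more visible, which is slightly closer to how the lemma is used later. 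Either way the substance is identical.
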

	\begin{proof}
		If $\# CC(X)=\infty,$ then there is nothing to prove. In view \Cref{R:Component_Non_Empty}, it is not possible that $\# CC(X)<\infty,$ but $\# CC(\widehat{X})=\infty.$ In fact, assume that $\widehat{X}=\cup^{\infty}_{j=1} X_{j},$ where each $X_{j}$ is disjoint connected component of $\widehat{X}.$ This implies $X=X\cap \widehat{X}=\cup^{\infty}_{j=1} X_{j}\cap X.$ \Cref{R:Component_Non_Empty} says that $X_{j}=\widehat{X_{j}\cap X}~\forall j\ge 1.$ Since each $X_{j}\cap X$ is disjoint, and $\# CC(X)<\infty,$ therefore there exists $j_{0}\in \mathbb{N}$ such that $X_{j}\cap X=\emptyset ~\forall j>j_{0}.$ Hence $X_{j}=\widehat{X_{j}\cap X}=\emptyset~\forall j>j_{0}.$ This is a contradiction to the fact that $\# CC(\widehat{X})=\infty.$
		
		\medskip
		
		\noindent Next, we assume that $\# CC(X)=m$ and $\# CC(\widehat{X})=l.$ Assume that $\widehat{X}=\cup^{l}_{j=1}X_{j},$  where each $X_{j}$ is disjoint connected component of $\widehat{X}.$ Then $X=\widehat{X}\cap X=\cup^{l}_{j=1}X_{j}\cap X.$ By using \Cref{R:Component_Non_Empty}, we obtain that $X_{j}=\widehat{X_{j}\cap X}~\forall j\ge 1.$ If each $X_{j}\cap X$ is connected, then $l=m.$ Otherwise $l< m.$
	\end{proof}

	\smallskip	
	
	We write
	\begin{align*}
		P(z_1,z_2)=	\sum_{\substack{0\le i\le m\\
				0\le j\le n	\\
		}}
		a_{ij}z_1^{i}z_2^{j}.
	\end{align*}
	
	\noindent Therefore, on $\Gamma_{\mathfrak{D}_{2}},$ we have
	\begin{align*}
		\overline{(P\circ \Psi)(z_1,z_2)}&=\overline{P(p_1,p_2)}
		=	\sum_{\substack{0\le i\le m\\
				0\le j\le n	\\
		}}
		\overline{a}_{ij}\overline{p_{1}}^{i}\overline{{p_2}}^{j}
		=\frac{1}{p^{m}_{1}p^{n}_{2}}	\sum_{\substack{0\le i\le m\\
				0\le j\le n	\\
		}}
		\overline{a}_{ij}p_{1}^{m-i}{p_2}^{n-j}\\
		&=\frac{(K\circ \Psi)(z_1,z_2)}{p^{m}_{1}p^{n}_{2}}=(h\circ\Psi)(z_1,z_2),
	\end{align*}
	\noindent where
	\begin{align*}
		K(z_1,z_2)=\sum_{\substack{0\le i\le m\\
				0\le j\le n	\\
		}}
		\overline{a}_{ij}z_1^{m-i}z_2^{n-j} \text{ and }\quad 
		h(z_1,z_2)=\frac{K(z_1,z_2)}{z^{m}_1z^{n}_{2}}.
	\end{align*}
	
	\noindent We define
	\begin{align*}
		L:=&\{p_1=0,|p_2|\le 1\}\cup\{p_2=0,|p_1|\le 1\} \text{ and }\\
		X:=&\left\{(z_1,z_2)\in \overline{\mathfrak{D}_{2}}\setminus (L\cup \Gamma_{\mathfrak{D}_{2}}): \overline{(P\circ \Psi)(z_1,z_2)}=(h\circ\Psi)(z_1,z_2)\right\}.
	\end{align*}

	\begin{lemma}\label{L:Tot_real Part}
		$X$ is totally real at $z_{0}\in X,$ if 
		\begin{align*}
			\triangle({z_0}):=
			\begin{vmatrix} 
				\frac{\partial P(\Psi)}{\partial{z_1}}& \frac{\partial P(\Psi)}{\partial z_2}\\[1.5ex]
				\frac{\partial h(\Psi)}{\partial{{z_1}}}& \frac{\partial h(\Psi)}{\partial{z_2}}\\[1.5ex]
			\end{vmatrix}_{z_0}
			\begin{vmatrix} 
				\frac{\partial p_1}{\partial{{z_1}}}& \frac{\partial p_1}{\partial{z_2}}\\[1.5ex]
				\frac{\partial p_2}{\partial{{z_1}}}& \frac{\partial p_2}{\partial{z_2}}\\[1.5ex]
			\end{vmatrix}_{z_0}\ne 0.
		\end{align*}	
	\end{lemma}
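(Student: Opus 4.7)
The plan is to unpack the defining equation of $X$ and compute its real tangent space through the $\partial/\bar\partial$ decomposition. Set $A:=h\circ\Psi$ and $B:=P\circ\Psi$. On $\overline{\mathfrak{D}_2}\setminus L$ the function $h$ is holomorphic (its only poles sit inside $L$), so both $A$ and $B$ are holomorphic near $z_0$, and the defining relation $\overline{(P\circ\Psi)}=h\circ\Psi$ becomes the single complex equation $F(z):=\bar B(z)-A(z)=0$. Since $A$ is holomorphic and $\bar B$ is antiholomorphic, the real differential splits cleanly as $dF=\bar\partial\bar B-\partial A$, so that for a real tangent vector $v$ at $z_0$ parameterised by $(v_1,v_2)\in\cplx^2$ in the $\partial_{z_j},\partial_{\bar z_j}$ basis,
\[
dF(v)=-\sum_{j=1}^{2}\frac{\partial A}{\partial z_j}(z_0)\,v_j+\sum_{j=1}^{2}\overline{\frac{\partial B}{\partial z_j}(z_0)}\,\bar v_j.
\]

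Next I would impose that both $v$ and $iv$ lie in $T_{z_0}X=\ker dF$. Writing out $dF(v)=0$ and $dF(iv)=0$ (the latter differing by an overall sign in the antiholomorphic part) and taking sum and difference, the mixed identity decouples into the two purely holomorphic linear equations
\[
\sum_{j=1}^{2}\frac{\partial A}{\partial z_j}(z_0)\,v_j=0,\qquad \sum_{j=1}^{2}\frac{\partial B}{\partial z_j}(z_0)\,v_j=0.
\]
This is a homogeneous $2\times 2$ $\cplx$-linear system whose coefficient matrix is exactly the one whose determinant is the first factor of $\triangle(z_0)$. The hypothesis $\triangle(z_0)\ne 0$ forces both factors to be nonzero; in particular the first one, so the system has only the trivial solution $v=0$. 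Hence $T_{z_0}X\cap iT_{z_0}X=\{0\}$, which is the totally real condition on the 2-real-dimensional tangent space.

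To justify that $X$ is indeed a smooth real surface near $z_0$ (so that talking about its tangent space is meaningful), I would observe that the same non-vanishing of the first determinant makes $\partial A$ and $\partial B$ linearly independent at $z_0$, so $dF\neq 0$ as a real $\cplx$-valued differential; the implicit function theorem then produces the required 2-dimensional submanifold structure. The second factor of $\triangle(z_0)$, namely the Jacobian of $\Psi$ at $z_0$, is strictly speaking not needed for the totally-real conclusion, but via the chain rule
\[
\begin{vmatrix}\partial_{z_1}A&\partial_{z_2}A\\ \partial_{z_1}B&\partial_{z_2}B\end{vmatrix}_{z_0}=\begin{vmatrix}\partial_{w_1}h&\partial_{w_2}h\\ \partial_{w_1}P&\partial_{w_2}P\end{vmatrix}_{\Psi(z_0)}\cdot\begin{vmatrix}\partial_{z_1}p_1&\partial_{z_2}p_1\\ \partial_{z_1}p_2&\partial_{z_2}p_2\end{vmatrix}_{z_0}
\]
it lets one read off the hypothesis in terms of the target coordinates $(w_1,w_2)$, which is the form in which it will be verified in the subsequent hull computation.

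The whole argument is essentially forced by the structure of $F$; the only place where care is needed is the bookkeeping of the conjugations so that the two CR-type equations decouple. I do not anticipate an honest obstacle here — the main value of stating the lemma this way is to isolate the explicit Jacobian criterion $\triangle(z_0)\ne 0$ that will later be checked pointwise to stratify $X$ into a totally real set and apply \Cref{R:Stratificatn_Apprx}.
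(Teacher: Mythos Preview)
Your argument is correct and essentially parallel to the paper's: both reduce the totally real condition to the nonvanishing of the Jacobian of $(P\circ\Psi,\,h\circ\Psi)$ with respect to $(z_1,z_2)$. The paper splits $F=\bar B-A$ into real and imaginary parts $\rho_1,\rho_2$ and invokes the standard criterion $\partial\rho_1\wedge\partial\rho_2\neq 0$, while you work directly with $\ker dF$ and test against $v$ and $iv$; these are two phrasings of the same computation.

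One slip to fix: the determinant of your coefficient matrix (with rows $\partial_zA$, $\partial_zB$) is \emph{not} ``the first factor'' of $\triangle(z_0)$ but the full product $\triangle(z_0)$ itself, exactly as your chain-rule display shows. In the lemma's notation $\partial P(\Psi)/\partial z_j$ means $(\partial P/\partial w_j)\circ\Psi$, so the first determinant is the Jacobian of $(P,h)$ in the \emph{target} variables. Consequently your remark that the second factor (the Jacobian of $\Psi$) is ``strictly speaking not needed'' is incorrect: if $d\Psi$ drops rank at $z_0$ then $\partial_zA$ and $\partial_zB$ are automatically $\cplx$-linearly dependent and your $2\times2$ system becomes singular regardless of the first factor. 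Since the hypothesis is that the full product $\triangle(z_0)\neq 0$, this does not affect the validity of the proof, only the commentary.
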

	\begin{proof}
		Take
		\begin{align*}
			\rho_1&=\rl\left(\overline{(P\circ \Psi)(z_1,z_2)}-(h\circ\Psi)(z_1,z_2)\right),\\
			\rho_2&=\imag\left(\overline{(P\circ \Psi)(z_1,z_2)}-(h\circ\Psi)(z_1,z_2)\right).
		\end{align*}
		
		\noindent	Therefore,
		\begin{align*}
			\rho_1(z)&=\frac{1}{2}\left(\overline{(P\circ \Psi)(z)}-(h\circ\Psi)(z)+(P\circ \Psi)(z)-\overline{(h\circ\Psi)(z)}\right)\\
			\rho_2(z)&=\frac{1}{2i}\left(\overline{(P\circ \Psi)(z)}-(h\circ\Psi)(z)-(P\circ \Psi)(z)+\overline{(h\circ\Psi)(z)}\right).
		\end{align*}
		
		\noindent	Since 	
		\begin{align*}
			X=\left\{(z,w)\in \overline{\mathfrak{D}_{2}}\setminus (L\cup \Gamma_{\mathfrak{D}_{2}}): \rho_1=0,\rho_2=0\right\},
		\end{align*}
		$X$ is totally real at $z_{0}\in X$ if 
		$$
		\begin{vmatrix} 
			\frac{\partial \rho_1}{\partial{{z_1}}}& \frac{\partial \rho_1}{\partial{z_2}}\\[1.5ex]
			\frac{\partial \rho_2}{\partial{{z_1}}}& \frac{\partial \rho_2}{\partial{z_2}}\\[1.5ex]
		\end{vmatrix}_{z_{0}}\ne 0.$$
		
		\noindent We compute:
		\begin{align*}
			\frac{\partial \rho_1}{\partial{z_j}}&=\frac{1}{2}\left(-\frac{\partial (h\circ \Psi)}{\partial{z_j}}+\frac{\partial (P\circ \Psi)}{\partial{z_j}}\right)\\
			\frac{\partial \rho_2}{\partial{z_j}}&=\frac{1}{2i}\left(-\frac{\partial (h\circ \Psi)}{\partial{z_j}}-\frac{\partial (P\circ \Psi)}{\partial{z_j}}\right),
		\end{align*}
		for $j=1,2.$ And
		\begin{align*}
			\frac{\partial (h\circ\Psi)}{\partial{z_j}}&=\frac{\partial h( \Psi)}{\partial{z_1}}\frac{\partial p_1}{\partial{z_j}}+\frac{\partial h( \Psi)}{\partial{z_2}}\frac{\partial p_2}{\partial{z_j}}\\
			\frac{\partial (P\circ\Psi)}{\partial{z_j}}&=\frac{\partial P( \Psi)}{\partial{z_1}}\frac{\partial p_1}{\partial{z_j}}+\frac{\partial P( \Psi)}{\partial{z_2}}\frac{\partial p_2}{\partial{z_j}},~ j=1,2.
		\end{align*}
		\noindent Hence 
		\begin{align*}
			\begin{vmatrix} 
				\frac{\partial \rho_1}{\partial{{z_1}}}& \frac{\partial \rho_1}{\partial{z_2}}\\[1.5ex]
				\frac{\partial \rho_2}{\partial{{z_1}}}& \frac{\partial \rho_2}{\partial{z_2}}\\[1.5ex]
			\end{vmatrix}_{z_{0}}
			=&\begin{vmatrix}
				\frac{1}{2}\left(-\frac{\partial (h\circ \Psi)}{\partial{z_1}}+\frac{\partial (P\circ \Psi)}{\partial{z_1}}\right)& \frac{1}{2}\left(-\frac{\partial (h\circ \Psi)}{\partial{z_2}}+\frac{\partial (P\circ \Psi)}{\partial{z_2}}\right)\\[1.5ex]
				\frac{1}{2i}\left(-\frac{\partial (h\circ \Psi)}{\partial{z_1}}+\frac{\partial (P\circ \Psi)}{\partial{z_1}}\right)& \frac{1}{2i}\left(-\frac{\partial (h\circ \Psi)}{\partial{z_2}}+\frac{\partial (P\circ \Psi)}{\partial{z_2}}\right)\\[1.5ex]
			\end{vmatrix}_{z_{0}}\\
			=&\frac{1}{2i}\begin{vmatrix}
				-\frac{\partial (h\circ \Psi)}{\partial{z_1}}&-\frac{\partial (h\circ \Psi)}{\partial{z_2}}\\[1.5ex]
				\frac{\partial (P\circ \Psi)}{\partial{z_1}}&\frac{\partial (P\circ \Psi)}{\partial{z_2}}\\[1.5ex]
			\end{vmatrix}_{z_{0}}.
		\end{align*}

		\noindent Therefore,
		\begin{align*}
			\begin{vmatrix} 
				\frac{\partial \rho_1}{\partial{{z_1}}}& \frac{\partial \rho_1}{\partial{z_2}}\\[1.5ex]
				\frac{\partial \rho_2}{\partial{{z_1}}}& \frac{\partial \rho_2}{\partial{z_2}}\\[1.5ex]
			\end{vmatrix}_{z_{0}}\ne 0 \text{ if and only if }
			&\begin{vmatrix}
				\frac{\partial (P\circ \Psi)}{\partial{z_1}}&\frac{\partial (P\circ \Psi)}{\partial{z_2}}\\[1.5ex]
				\frac{\partial (h\circ \Psi)}{\partial{z_1}}&\frac{\partial (h\circ \Psi)}{\partial{z_2}}\\[1.5ex]
			\end{vmatrix}_{z_{0}}
			&=\begin{vmatrix} 
				\frac{\partial P(\Psi)}{\partial{z_1}}& \frac{\partial P(\Psi)}{\partial z_2}\\[1.5ex]
				\frac{\partial h(\Psi)}{\partial{{z_1}}}& \frac{\partial h(\Psi)}{\partial{z_2}}\\[1.5ex]
			\end{vmatrix}_{z_{0}}
			\times
			\begin{vmatrix} 
				\frac{\partial p_1}{\partial{{z_1}}}& \frac{\partial p_1}{\partial{z_2}}\\[1.5ex]
				\frac{\partial p_2}{\partial{{z_1}}}& \frac{\partial p_2}{\partial{z_2}}\\[1.5ex]
			\end{vmatrix}_{z_0}\ne 0.	
		\end{align*}
	\end{proof}

	We can write 
	\begin{align*}
		\triangle(z)=
		\begin{vmatrix} 
			\frac{\partial P(\Psi)}{\partial{z_1}}& \frac{\partial P(\Psi)}{\partial z_2}\\[1.5ex]
			\frac{\partial h(\Psi)}{\partial{{z_1}}}& \frac{\partial h(\Psi)}{\partial{z_2}}\\[1.5ex]
		\end{vmatrix}
		\times
		\begin{vmatrix} 
			\frac{\partial p_1}{\partial{{z_1}}}& \frac{\partial p_1}{\partial{z_2}}\\[1.5ex]
			\frac{\partial p_2}{\partial{{z_1}}}& \frac{\partial p_2}{\partial{z_2}}\\[1.5ex]
		\end{vmatrix}_{z}=\frac{1}{p^{m+1}_{1}p^{n+1}_{2}}\Pi^{l}_{j=1}q_{j}(z),
	\end{align*}
	for some $l\in \mathbb{N},$ where each $q_{j}$ is an irreducible polynomial in $\cplx^2.$ We define the corresponding irreducible algebraic variety $Z_{j}:=\{z\in \cplx^2:q_{j}(z)=0\}.$ We assume $\triangle({z})\not\equiv 0$ on $X.$ Therefore, each $q_{j}$ is a non-zero holomorphic polynomial in $\cplx^2.$

	\noindent We set 
	\begin{align}\label{Set:Complex Pt}
		\Sigma:=\left\{z\in \overline{\mathfrak{D}_{2}}\setminus (L\cup \Gamma_{\mathfrak{D}_{2}}): \triangle{(z)}=0\right\}.
	\end{align}
	
	\noindent We denote 
	\begin{align}\label{E:Defn_Qj}
		Q_{j}=Z_{j}\cap \Gamma_{\mathfrak{D}_{2}}.
	\end{align}
	\begin{theorem}\label{T:Analogus_Jimbo}
		We let $J=\{j\in \{1,\cdots,l\}: \emptyset\ne Q_{j}\ne \widehat{Q_{j}}, \widehat{Q_{j}}\setminus L\subset X\}.$ 
		\begin{enumerate}
			\item [(i)]If $J=\emptyset,$ then $\widehat{\Gr_{\overline{P\circ \Psi}}(\Gamma_{\mathfrak{D}_{2}})}=\Gr_{\overline{P\circ \Psi}}(\Gamma_{\mathfrak{D}_{2}}).$
			\item[(ii)] If $J\ne \emptyset,$ then 
		\end{enumerate}

		\begin{align*}
			\widehat{\Gr_{\overline{P\circ \Psi}}(\Gamma_{\mathfrak{D}_{2}})}=\Gr_{\overline{P\circ \Psi}}(\Gamma_{\mathfrak{D}_{2}})\cup\bigg(\cup_{j\in J}  \Gr_{\overline{P\circ \Psi}}(\widehat{Q_{j}})\bigg).
		\end{align*}
	\end{theorem}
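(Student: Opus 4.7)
The plan is to establish both inclusions by first confining the hull to a complex surface and then using a combination of totally real techniques (\Cref{L:Tot_rlpt_Pk pt}), local maximum principles, and one-dimensional variety theory. The starting observation is that on $\Gamma_{\mathfrak{D}_2}$, where $\overline{P\circ\Psi}(z) = K(\Psi(z))/(p_1^m p_2^n) = h(\Psi(z))$, the polynomial $R(z,w) := p_1(z)^m p_2(z)^n w - K(\Psi(z))$ vanishes identically on $\Gr_{\overline{P\circ\Psi}}(\Gamma_{\mathfrak{D}_2})$. Combined with polynomial convexity of $\overline{\mathfrak{D}_2}$, this yields $\widehat{\Gr_{\overline{P\circ\Psi}}(\Gamma_{\mathfrak{D}_2})} \subset \{R=0\}\cap(\overline{\mathfrak{D}_2}\times\cplx)$; off $L$ the relation $R=0$ forces $w = h(\Psi(z))$, so the hull restricted to $(\overline{\mathfrak{D}_2}\setminus L)\times\cplx$ sits inside the complex $2$-surface $\Gr_{h\circ\Psi}(\overline{\mathfrak{D}_2}\setminus L)$.

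For the inclusion $\subseteq$, I need to push the projection of the hull to the base further into $\Gamma_{\mathfrak{D}_2} \cup L \cup \Sigma$. For a point $z_0 \in \mathfrak{D}_2 \setminus (\Gamma_{\mathfrak{D}_2} \cup L \cup X \cup \Sigma)$, the graph point $(z_0, h(\Psi(z_0)))$ differs from the would-be anti-holomorphic value $(z_0, \overline{P(\Psi(z_0))})$; a peak-function argument combined with Rossi's local maximum principle (\Cref{R:Rossi}) rules it out of the hull. For $z_0 \in X \setminus \Sigma$, \Cref{L:Tot_real Part} says $X$ is a totally real $2$-manifold near $z_0$, hence so is its graph under $\overline{P\circ\Psi}$; \Cref{L:Tot_rlpt_Pk pt} applied in a small neighborhood forces the hull to miss the totally real interior. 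Therefore the projection of the interior part of the hull lies in $\Sigma = \bigcup_j Z_j \cap (\mathfrak{D}_2 \setminus L)$. On each irreducible component $Z_j$, the hull of the one-dimensional boundary $\Gr_{\overline{P\circ\Psi}}(Q_j)$ is classically controlled by the bounded components of $Z_j \setminus Q_j$, which constitute $\widehat{Q_j} \setminus Q_j$, with \Cref{L:Nbr_Compnt_Hull} assisting in the component counting. The index $j$ contributes only when $Q_j \neq \emptyset$, $\widehat{Q_j} \neq Q_j$, and $\widehat{Q_j} \setminus L \subset X$; otherwise $\overline{P\circ\Psi}$ and $h\circ\Psi$ would disagree on an open subset of $\widehat{Q_j} \setminus L$ and consistency with the surface $\Gr_{h\circ\Psi}$ breaks down. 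This is precisely the defining condition of $J$, giving (i) when $J = \emptyset$.

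For the reverse inclusion $\supseteq$ in case (ii), fix $j \in J$ and $\xi \in \widehat{Q_j} \setminus Q_j$. Since $\widehat{Q_j} \setminus L \subset X$, on this set $\overline{P(\Psi(z))} = h(\Psi(z))$, and for any polynomial $G(z,w)$ the composition $z \mapsto G(z, h(\Psi(z)))$ is meromorphic on the algebraic curve $Z_j$ with poles on $Z_j \cap L$. Applying the maximum principle on $Z_j$ to the bounded components of $Z_j \setminus Q_j$ (those whose closures in $Z_j$ are compact and make up $\widehat{Q_j} \setminus Q_j$) gives
\[
|G(\xi, \overline{P\circ\Psi}(\xi))| = |G(\xi, h(\Psi(\xi)))| \le \sup_{Q_j} |G(z, \overline{P\circ\Psi}(z))| \le \sup_{\Gamma_{\mathfrak{D}_2}} |G|,
\]
so $(\xi, \overline{P\circ\Psi}(\xi)) \in \widehat{\Gr_{\overline{P\circ\Psi}}(\Gamma_{\mathfrak{D}_2})}$. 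Points of $\widehat{Q_j} \cap L$ are then handled by continuity of the anti-holomorphic polynomial $\overline{P\circ\Psi}$.

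The main obstacle is the confining step in the second paragraph: simultaneously ruling out interior hull points over $\mathfrak{D}_2 \setminus (L \cup \Sigma \cup \Gamma_{\mathfrak{D}_2})$ that lie off $X$ and over $X \setminus \Sigma$ itself, since the two cases invoke different mechanisms (a separating polynomial or Rossi argument in the first case, and a totally real push-off via \Cref{L:Tot_rlpt_Pk pt} in the second). Equally delicate is the treatment of the polar locus $L$ in both directions, where $h\circ\Psi$ blows up while $\overline{P\circ\Psi}$ remains regular, so that the identification $w = h(\Psi(z))$ degenerates and one must argue via continuity extensions across $L$.
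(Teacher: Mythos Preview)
Your overall architecture is right, but the argument has two genuine gaps that the paper closes with specific lemmas you do not reproduce.

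First, you never cleanly confine the hull to the graph $\Gr_{\overline{P\circ\Psi}}(\overline{\mathfrak{D}_2})$. You establish $\widehat{\Gr_{\overline{P\circ\Psi}}(\Gamma_{\mathfrak{D}_2})}\subset\{R=0\}$, which off $L$ forces $w=h(\Psi(z))$, but this does \emph{not} by itself force $w=\overline{P\circ\Psi}(z)$. Your proposed ``peak-function plus Rossi'' argument for points $z_0\notin X\cup\Sigma$ is left unspecified, and you never treat the case $z_0\in\Sigma\setminus X$ at all; you only conclude the projection lands in $\Sigma$, not in $\Sigma\cap X$. The paper handles this in one stroke via \Cref{L:Poly_Cnvx}: since $\overline{P\circ\Psi}$ is pluriharmonic and $\mathfrak{D}_2$ has the segment property, $\Gr_{\overline{P\circ\Psi}}(\overline{\mathfrak{D}_2})$ is polynomially convex, so the hull sits inside it. Intersecting with $\{R=0\}$ then immediately gives $X\cup L\cup\Gamma_{\mathfrak{D}_2}$ as the base.

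Second, and more seriously, you never establish that $P\circ\Psi$ is \emph{constant} on each contributing $Z_j$. The paper proves this as a separate lemma (\Cref{L:Hull_RegularSet}): if the hull meets $\Gr_{\overline{P\circ\Psi}}(Z_j)$ at a regular point, one localizes via \Cref{R:Rossi} to a small disc $D_\alpha\subset Z_j$, shows $\Gr_{\overline{P\circ\Psi}}(\partial D_\alpha)$ is not polynomially convex, and invokes Wermer's maximality theorem to force $\overline{P\circ\Psi}$ holomorphic (hence constant) on $D_\alpha$, then on all of $Z_j$. Without this constancy your appeal to ``classical one-dimensional hull theory'' for $\Gr_{\overline{P\circ\Psi}}(Q_j)$ is unjustified: that graph is the graph of an anti-holomorphic function over $Q_j$, and its hull has no reason to be described by bounded components of $Z_j\setminus Q_j$ unless the function is holomorphic on $Z_j$. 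The constancy is also what makes your reverse-inclusion maximum-principle step clean (the paper's version, \Cref{L:Q_j in Hull}, simply replaces $\overline{P\circ\Psi}$ by the constant $\bar c_j$), and it is the input to the explicit peak-function construction (\Cref{L:Outsde_Qj_Nt_InHull}) that rules out points of $Z_j\cap\overline{\mathfrak{D}}_2$ lying outside $\widehat{Q_j}$---a step your sketch omits entirely.
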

	\begin{remark}
		If (i) of \Cref{T:Analogus_Jimbo} is true, then by \Cref{T: Approx_Cont_Func}, we get that
		\begin{align*}
			[z_1,z_2,\overline{P\circ\Psi}]_{\Gamma_{{\mathfrak{D}_{2}}}}=\smoo(\Gamma_{{\mathfrak{D}_{2}}}).
		\end{align*}
	\end{remark}
	
	We will prove this theorem through some lemmas.
	\begin{lemma}\label{L:Poly_Cnvx_Hull}
		$\widehat{\Gr_{\overline{P\circ \Psi}}(\Gamma_{\overline{\mathfrak{D}_{2}}})}\subset
		\Gr_{\overline{P\circ \Psi}}(X\cap \Sigma) \cup  \Gr_{\overline{P\circ \Psi}}(L\cup \Gamma_{\mathfrak{D}_{2}}),$ where $\Sigma$ as in (\ref{Set:Complex Pt}).
	\end{lemma}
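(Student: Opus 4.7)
The plan is to pin down the hull by imposing two compatible constraints—one holomorphic, one plurisubharmonic—and then invoke the totally real argument of Lemma~\ref{L:Tot_rlpt_Pk pt}. Since $\overline{\mathfrak{D}}_{2}$ is a polynomial polyhedron, hence polynomially convex, the projection of $\widehat{\Gr_{\overline{P\circ\Psi}}(\Gamma_{\mathfrak{D}_{2}})}$ to $\cplx^{2}$ lies in $\overline{\mathfrak{D}}_{2}$. Exploiting the identity $\overline{P\circ\Psi(z)} = K(p_{1}(z),p_{2}(z))/(p_{1}(z)^{m}p_{2}(z)^{n})$ on $\Gamma_{\mathfrak{D}_{2}}$, the holomorphic polynomial
\[
	f(z,w) := p_{1}(z)^{m}p_{2}(z)^{n}\,w - K(p_{1}(z),p_{2}(z))
\]
vanishes identically on $\Gr_{\overline{P\circ\Psi}}(\Gamma_{\mathfrak{D}_{2}})$, hence on its polynomial hull. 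So for any $(\alpha,\beta)$ in the hull with $\alpha\notin L$, the relation $f(\alpha,\beta)=0$ forces $\beta = h\circ\Psi(\alpha)$.

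Next, to force $\beta = \overline{P\circ\Psi(\alpha)}$ on the hull, I would introduce
\[
	u(z,w) := |w - \overline{P\circ\Psi(z)}|^{2} = |w|^{2} + |P\circ\Psi(z)|^{2} - 2\rl\bigl(w\cdot P\circ\Psi(z)\bigr).
\]
The first two summands are plurisubharmonic on $\cplx^{3}$ as moduli squared of holomorphic functions, while the third is the real part of the holomorphic function $(z,w)\mapsto w\cdot P\circ\Psi(z)$, hence pluriharmonic; therefore $u$ is plurisubharmonic on all of $\cplx^{3}$, and $u\equiv 0$ on $\Gr_{\overline{P\circ\Psi}}(\Gamma_{\mathfrak{D}_{2}})$. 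Applying Result~\ref{R:hormander} with $G=\cplx^{3}$ gives $u\le 0$ on the hull; since $u\ge 0$ everywhere, in fact $u\equiv 0$ there. Thus every $(\alpha,\beta)$ in the hull satisfies $\beta = \overline{P\circ\Psi(\alpha)}$. Combined with the previous paragraph, whenever $\alpha\notin L\cup\Gamma_{\mathfrak{D}_{2}}$ we obtain $\overline{P\circ\Psi(\alpha)} = h\circ\Psi(\alpha)$, i.e., $\alpha\in X$.

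It remains to show that such projection points further lie in $\Sigma$. Suppose for contradiction that $(\alpha_{0},\beta_{0})$ lies in the hull with $\alpha_{0}\notin L\cup\Gamma_{\mathfrak{D}_{2}}$ and $\triangle(\alpha_{0})\ne 0$. By Lemma~\ref{L:Tot_real Part}, $X$ is totally real at $\alpha_{0}$; moreover, $\triangle(\alpha_{0})\ne 0$ is equivalent to $\cplx$-linear independence of $\partial\rho_{1},\partial\rho_{2}$, which also yields $\mathbb{R}$-linear independence of $d\rho_{1},d\rho_{2}$, so $X$ is a smooth totally real real-$2$-submanifold in some open neighborhood $U\subset\cplx^{2}$ of $\alpha_{0}$ chosen disjoint from $L\cup\Gamma_{\mathfrak{D}_{2}}$. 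Set $\Omega := U\times\cplx$. By the preceding steps,
\[
	\widehat{\Gr_{\overline{P\circ\Psi}}(\Gamma_{\mathfrak{D}_{2}})}\cap\Omega \;\subset\; \Gr_{\overline{P\circ\Psi}}(X\cap U),
\]
and the right-hand side is totally real in $\Omega$ as the graph of a smooth function over a totally real base. Since $\Gr_{\overline{P\circ\Psi}}(\Gamma_{\mathfrak{D}_{2}})\cap\Omega = \emptyset$, Lemma~\ref{L:Tot_rlpt_Pk pt} forces $\widehat{\Gr_{\overline{P\circ\Psi}}(\Gamma_{\mathfrak{D}_{2}})}\cap\Omega = \emptyset$, contradicting the choice of $(\alpha_{0},\beta_{0})$. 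Hence $\alpha_{0}\in\Sigma$, and the desired containment follows.

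The main delicacy I anticipate is ensuring that $X$ is genuinely a totally real submanifold (and not merely a real-analytic set) near $\alpha_{0}$, since that smooth structure is required to apply Lemma~\ref{L:Tot_rlpt_Pk pt} verbatim. The single nondegeneracy hypothesis $\triangle(\alpha_{0})\ne 0$ simultaneously supplies total realness of the tangent space and $\mathbb{R}$-linear independence of $d\rho_{1},d\rho_{2}$, so both manifold smoothness and total realness come for free; making this reduction explicit is the step that needs to be written out with care. A secondary check is the plurisubharmonicity of $u$ on all of $\cplx^{3}$, which, as indicated, is immediate from the explicit decomposition into plurisubharmonic and pluriharmonic pieces.
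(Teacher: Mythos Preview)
Your proof is correct. The overall architecture matches the paper's---pin down the hull inside the graph, then use the holomorphic constraint $f(z,w)=p_{1}^{m}p_{2}^{n}w-K\circ\Psi$ to land in $X$ off of $L$, and finally remove the totally real locus $X\setminus\Sigma$ via Lemmas~\ref{L:Tot_real Part} and~\ref{L:Tot_rlpt_Pk pt}---but one step is genuinely different. To show that the hull sits inside $\Gr_{\overline{P\circ\Psi}}(\overline{\mathfrak{D}}_{2})$, the paper invokes Lemma~\ref{L:Poly_Cnvx}, which in turn needs the segment property of $\mathfrak{D}_{2}$ and the PSH-Mergelyan approximation (Result~\ref{R:PshAprx}); this works for any pluriharmonic $h$. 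You instead exploit the special feature that $\overline{P\circ\Psi}$ is anti-holomorphic, so $u(z,w)=|w-\overline{P\circ\Psi(z)}|^{2}=|w|^{2}+|P\circ\Psi(z)|^{2}-2\rl\bigl(w\,P\circ\Psi(z)\bigr)$ is a \emph{global} plurisubharmonic function on $\cplx^{3}$, allowing a direct appeal to Result~\ref{R:hormander} with $G=\cplx^{3}$. This is a cleaner and more elementary route in the present setting, at the cost of not generalizing to arbitrary pluriharmonic data. Your remark that the single condition $\triangle(\alpha_{0})\ne 0$ simultaneously yields both the smooth manifold structure of $X$ near $\alpha_{0}$ (via $\mathbb{R}$-independence of $d\rho_{1},d\rho_{2}$, visible from the nonvanishing $(2,0)$-part $\partial\rho_{1}\wedge\partial\rho_{2}$) and its total realness is accurate and worth making explicit, as the paper leaves this implicit.
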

	\begin{proof}
		First, we show that 
		
		\begin{align*}
			\widehat{\Gr_{\overline{P\circ \Psi}}(\Gamma_{\mathfrak{D}_{2}})}\subset
			\Gr_{\overline{P\circ \Psi}}(X) \cup  \Gr_{\overline{P\circ \Psi}}(L\cup \Gamma_{\mathfrak{D}_{2}}). 
		\end{align*}
		Since $\overline{\mathfrak{D}_{2}}$ is polynomially convex, $\mathfrak{D}_{2}$ has the segment property, and ${\overline{P\circ \Psi}}$ is pluriharmonic function, therefore using \Cref{L:Poly_Cnvx}, we can say that
		
		\begin{align}\label{Set:Hull 1}
			\widehat{\Gr_{\overline{P\circ \Psi}}(\Gamma_{\mathfrak{D}_{2}})}\subset
			\Gr_{\overline{P\circ \Psi}}(\overline{\mathfrak{D}_{2}}).
		\end{align}
		
		\noindent We now compute:
		\begin{align*}
			\notag 	\Gr_{\overline{P\circ \Psi}}(\Gamma_{\mathfrak{D}_{2}})&=\left\{(z_1,z_2,\xi):\xi=\overline{(P\circ \Psi)(z_1,z_2)}, (z_1,z_2)\in \Gamma_{\mathfrak{D}_{2}} \right\}\\\notag
			&=\left\{(z_1,z_2,\xi):\xi= \overline{(P\circ \Psi)(z_1,z_2)}=(h\circ\Psi)(z_1,z_2), (z_1,z_2)\in \Gamma_{\mathfrak{D}_{2}} \right\}\\\notag
			&=\left\{(z_1,z_2,\xi):\xi=(h\circ\Psi)(z_1,z_2)=\frac{(K\circ\Psi)(z_1,z_2)}{p^{m}_1p^{n}_2}, (z_1,z_2)\in \Gamma_{\mathfrak{D}_{2}} \right\}\\
			&=\left\{(z_1,z_2,\xi):\xi{p^{m}_1p^{n}_2}-{(K\circ\Psi)(z_1,z_2)}=0, (z_1,z_2)\in \Gamma_{\mathfrak{D}_{2}} \right\}.
		\end{align*}
		Therefore,
		\begin{align}\label{Set:Hull 2}
			\widehat{\Gr_{\overline{P\circ \Psi}}(\Gamma_{\mathfrak{D}_{2}})}\subset\left\{(z_1,z_2,\xi):\xi{p^{m}_1p^{n}_2}-{(K\circ\Psi)(z_1,z_2)}=0, (z_1,z_2)\in \overline{\mathfrak{D}}_{2} \right\}.	
		\end{align}
		
		\noindent Using (\ref{Set:Hull 1}) and (\ref{Set:Hull 2}), we get that 
		\begin{align*}
			\widehat{\Gr_{\overline{P\circ \Psi}}(\Gamma_{\mathfrak{D}_{2}})}\subset
			\Gr_{\overline{P\circ \Psi}}(X) \cup  \Gr_{\overline{P\circ \Psi}}(L\cup \Gamma_{\mathfrak{D}_{2}}). 
		\end{align*}
		
		\noindent Again, since by \Cref{L:Tot_real Part}, $X\setminus\Sigma$ is totally real, using \Cref{L:Tot_rlpt_Pk pt}, we obtain that 
		\begin{align*}
			\widehat{\Gr_{\overline{P\circ \Psi}}(\Gamma_{\mathfrak{D}_{2}})}\subset\Gr_{\overline{P\circ \Psi}}(X\cap \Sigma) \cup  \Gr_{\overline{P\circ \Psi}}(L\cup \Gamma_{\mathfrak{D}_{2}}),	
		\end{align*}		
		which completes the proof.	
	\end{proof}
	
	By \Cref{L:Nbr_Compnt_Hull}, we have that the number of connected component of $\widehat{\Gr_{\overline{P\circ \Psi}}(\Gamma_{\mathfrak{D}_{2}})}$ is less than or equal to the number of connected components of $\Gr_{h}(\Gamma_{\mathfrak{D}_{2}}).$ Therefore, it is enough to consider the set $\Gr_{\overline{P\circ \Psi}}(X\cap \Sigma).$

	\begin{lemma}\label{L:Hull_RegularSet}
		Fix $j_{0}\in \{1,\cdots,l\}.$	If there exists $\alpha\in (Z_{j_0})_{reg}$ such that $\Gr_{\overline{P\circ \Psi}}(\alpha)\in\widehat{\Gr_{\overline{P\circ \Psi}}(\Gamma_{{\mathfrak{D}_{2}}})},$ then $P\circ\Psi=c_{j_0}, \text{ on } Z_{j_0}$ for some constant $c_{j_0}\in \cplx.$
	\end{lemma}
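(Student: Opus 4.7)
The plan is to use Rossi's local maximum principle (\Cref{R:Rossi}) together with the containment in \Cref{L:Poly_Cnvx_Hull} to force a dichotomy on the local real dimension of $X\cap Z_{j_0}$ near $\alpha$, and to rule out the low-dimensional alternative by a finite-hull argument. The nontrivial content of the lemma concerns $\alpha\in(Z_{j_0})_{\mathrm{reg}}\setminus (L\cup \Gamma_{\mathfrak{D}_{2}})$, so I shall work under that assumption (otherwise $\Gr_{\overline{P\circ\Psi}}(\alpha)$ lies in the original graph for trivial reasons); by \Cref{L:Poly_Cnvx_Hull} the hypothesis then forces $\alpha\in X\cap\Sigma$, and after shrinking I may assume $\alpha$ lies on no $Z_j$ other than $Z_{j_0}$.

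First I would pick an open ball $U=B(\alpha,\epsilon)$ with $\overline U$ disjoint from $L\cup\Gamma_{\mathfrak{D}_{2}}$ and from every $Z_j$ with $j\ne j_0$, and with $\epsilon$ small enough (and Sard-generic, see below) that $D_\alpha:=Z_{j_0}\cap U$ is a smoothly bounded complex disc containing $\alpha$. Set $E=\Gr_{\overline{P\circ\Psi}}(\Gamma_{\mathfrak{D}_{2}})$, $a=\Gr_{\overline{P\circ\Psi}}(\alpha)\in \widehat E$, and $N=U\times \mathbb B(r)$ for $r>\sup_{\Gamma_{\mathfrak{D}_{2}}}|\overline{P\circ\Psi}|$, so that $E\cap N=\emptyset$, $a\in N$, and $\widehat E\cap(U\times\partial\mathbb B(r))=\emptyset$ by the maximum modulus principle applied to the polynomial $w$. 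Rossi's theorem then gives $a\in \widehat{\widehat E\cap(\partial U\times \mathbb C)}$. By \Cref{L:Poly_Cnvx_Hull} and the choice of $U$, the projection of $\widehat E\cap(\partial U\times\mathbb C)$ to $\mathbb C^2$ is contained in $X\cap\Sigma\cap \partial U=X\cap \partial D_\alpha$, so
\[
a\in \widehat{\Gr_{\overline{P\circ\Psi}}(X\cap \partial D_\alpha)}.
\]

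Next I would split into cases on the real dimension of $X\cap D_\alpha$. In Case A, this real-analytic set has real dimension $2$ in $D_\alpha$; since $D_\alpha$ is an irreducible complex curve, this forces $X\cap D_\alpha=D_\alpha$, i.e.\ $\overline{P\circ\Psi}=h\circ\Psi$ on $D_\alpha$. Parametrizing $D_\alpha$ by a biholomorphism $\phi:\mathbb D\to D_\alpha$ and writing $F(\zeta)=P\circ\Psi\circ\phi(\zeta)$ and $G(\zeta)=h\circ\Psi\circ\phi(\zeta)$, both are holomorphic on $\mathbb D$ (since $\phi(\mathbb D)\cap L=\emptyset$), and the identity $\overline{F(\zeta)}=G(\zeta)$ on $\mathbb D$ combined with conjugation forces $F\equiv \overline G$ on $\mathbb D$, so the holomorphic function $F$ agrees with an antiholomorphic function on an open set and must be constant. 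Hence $P\circ\Psi$ is constant on $D_\alpha$, and by the identity theorem on the irreducible analytic variety $Z_{j_0}$, constant on all of $Z_{j_0}$, as required.

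In Case B, $X\cap D_\alpha$ has real dimension at most $1$. Choosing $\epsilon$ Sard-generic so that the smooth real circle $\partial D_\alpha$ is transverse to the (stratified) real-analytic set $X\cap Z_{j_0}$, the intersection $X\cap\partial D_\alpha$ is finite; consequently $\Gr_{\overline{P\circ\Psi}}(X\cap\partial D_\alpha)$ is a finite subset of $\mathbb C^3$ and hence polynomially convex. This gives $a\in \Gr_{\overline{P\circ\Psi}}(X\cap\partial D_\alpha)\subset\partial U\times\mathbb C$, contradicting $\alpha\in U$. Thus Case B cannot occur. The main obstacle in this plan is the transversality/finiteness step in Case B: one must know that $X$ is honestly a real-analytic set of controlled real dimension on $D_\alpha$ (so that Sard applies to the stratification and the boundary intersection is finite for generic radius); once that is verified, the Rossi-plus-finite-hull dichotomy closes the argument.
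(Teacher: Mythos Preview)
Your argument is correct, but it takes a different and somewhat longer route than the paper's.

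The paper's proof also begins with Rossi's local maximum principle applied exactly as you do, producing a point $a=\Gr_{\overline{P\circ\Psi}}(\alpha)$ in the hull of a set over $\partial U$. However, the paper uses only the \emph{weaker} containment $\widehat E\subset \Gr_{\overline{P\circ\Psi}}\bigl(\Gamma_{\mathfrak{D}_{2}}\cup\bigcup_j Z_j\bigr)$ (rather than the sharper $X\cap\Sigma$ version from \Cref{L:Poly_Cnvx_Hull}), so that the relevant boundary set is all of $\partial D_\alpha$, not just $X\cap\partial D_\alpha$. From $a\in\widehat{\Gr_{\overline{P\circ\Psi}}(\partial D_\alpha)}\setminus\Gr_{\overline{P\circ\Psi}}(\partial D_\alpha)$ the paper then invokes Wermer's maximality theorem on the circle $\partial D_\alpha$: since the graph over the circle is not polynomially convex, $\overline{P\circ\Psi}$ must be holomorphic on $D_\alpha$. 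As $P\circ\Psi$ is also holomorphic there, it is constant on $D_\alpha$, hence on $Z_{j_0}$ by the identity theorem.

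Your approach instead exploits the finer containment in $X$, then bifurcates on the real dimension of the real-analytic set $X\cap D_\alpha$: in the full-dimensional case you read off $\overline{P\circ\Psi}=h\circ\Psi$ on $D_\alpha$ directly and conclude constancy from the holomorphic--antiholomorphic identity; in the low-dimensional case you rule it out via a Sard/transversality argument giving a finite (hence polynomially convex) boundary intersection. This is valid, and it has the virtue of being self-contained (no appeal to Wermer's theorem), but it trades that black-box step for the stratified transversality step you rightly flag as the main technical point. The paper's route is shorter precisely because Wermer's maximality theorem absorbs both of your cases at once.
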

	\begin{proof}
		We choose a neighborhood $U$ of $\alpha$ such that $U\cap Z_{j_0}=:D_{\alpha}$ is a smoothly bounded disc containing $\alpha.$ Let $X:=\widehat{\Gr_{\overline{(P\circ \Psi)}}(\Gamma_{\mathfrak{D}_{2}})}\cap(\partial U\times\cplx^{N}).$ Since $\widehat{\Gr_{\overline{(P\circ \Psi)}}(\Gamma_{\mathfrak{D}_{2}})}\subset \Gr_{\overline{(P\circ \Psi)}}(\Gamma_{\mathfrak{D}_{2}}\cup [\cup^{l}_{j=1}Z_{j} ]),$ we have 
		\begin{align*}
			X\subset \Gr_{\overline{(P\circ \Psi)}}(\Gamma_{\mathfrak{D}_{2}}\cup [\cup^{l}_{j=1}Z_{j} ])\cap (\partial U\times\cplx^{N})\subset \Gr_{\overline{(P\circ \Psi)}}(\partial U\cap Z_{j_0}).
		\end{align*}

		\noindent We claim that		
		\begin{align*}
			\Gr_{\overline{(P\circ \Psi)}}(\alpha)\in \widehat{X}\subset \widehat{\Gr_{\overline{(P\circ \Psi)}}(\partial U\cap V)}= \widehat{\Gr_{\overline{(P\circ \Psi)}}(\partial D_{\alpha})}.
		\end{align*}
		
		\noindent To prove this claim, let $E=\Gr_{h}(\Gamma_{\mathfrak{D}_{2}})$ and $N=U\times\mathbb{B}(r)$ with $r>\sup_{U}|\overline{(P\circ \Psi)}|.$ Note that $E\cap N=\emptyset$ and $\partial N\cap\widehat{E}=[(\partial U\times\mathbb{B}(r))\cap\widehat{E}]\cup [(U\times \partial\mathbb{B}(r))\cap\widehat{E}].$ We claim that $(U\times \partial\mathbb{B}(r))\cap\widehat{E}=\emptyset:$ if possible, let $(\eta,\xi)\in (U\times \partial\mathbb{B}(r))\cap\widehat{E}.$ Then $\xi=\overline{(P\circ \Psi)}(\eta)$ and $\eta\in U, |\xi|=r.$ But on $U,$ we have $|\xi|=|\overline{(P\circ \Psi)}(\eta)|<r.$ This is a contradiction to the assumption that $(U\times \partial\mathbb{B}(r))\cap\widehat{E}\not =\emptyset.$ Then by \Cref{R:Rossi}, we have
		\begin{align*}
			\Gr_{\overline{(P\circ \Psi)}}(\alpha)\in \widehat{\widehat{E}\cap \partial N}&=\widehat{[\widehat{\Gr_{\overline{(P\circ \Psi)}}(\Gamma_{\mathfrak{D}_{2}})}\cap (\partial U\times\mathbb{B}(r))]}\\
			&\subset \widehat{[\widehat{\Gr_{\overline{(P\circ \Psi)}}(\Gamma_{\mathfrak{D}_{2}})}\cap (\partial U\times\cplx^{N})]}\\
			&=\widehat{X}\subset \widehat{\Gr_{\overline{(P\circ \Psi)}}(\partial U\cap V)}= \widehat{\Gr_{\overline{(P\circ \Psi)}}(\partial D_{\alpha})}.
		\end{align*}
		
		\noindent Therefore, $\widehat{\Gr_{\overline{(P\circ \Psi)}}(\partial D_{\alpha})}$ is not polynomially convex, hence by the Wermer's maximality theorem we can say that $\overline{(P\circ \Psi)}$ is holomorphic in $D_{\alpha}.$ If $\alpha$ runs over regular points of $Z_{j},$ the corresponding neighborhoods $D_{\alpha}$ cover $(Z_{j_0})_{\text{reg}}.$	
	\end{proof}

	\begin{lemma}\label{L:Z_jNot_InHull}
		Fix $j_{0}\in \{1,\cdots,l\}.$ If $P\circ\Psi\equiv c_{j_0}, \text{ on } Z_{j_0}$ for some constant $c_{j_0}\in \cplx$ but $h\circ\Psi\not \equiv \bar{c}_{j_0}$ on $Z_{j_0}\cap \overline{\mathfrak{D}}_{2}\setminus L,$ then 
		\begin{align*}
			\widehat{\Gr_{\overline{P\circ \Psi}}(\Gamma_{\mathfrak{D}_{2}})}\cap \Gr_{\overline{P\circ \Psi}}\left(X\cap Z_{j_0}\cap \overline{\mathfrak{D}}_2\setminus (L\cup \Gamma_{{\mathfrak{D}_{2}}})\right)=\emptyset.
		\end{align*}
	\end{lemma}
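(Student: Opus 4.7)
The plan is to show that, under the stated hypotheses, for every $\alpha\in X\cap Z_{j_0}\cap\overline{\mathfrak{D}}_2\setminus(L\cup\Gamma_{\mathfrak{D}_{2}})$ the corresponding graph point $(\alpha,\overline{P\circ\Psi}(\alpha))=(\alpha,\bar{c}_{j_0})$ is a singleton connected component of $\widehat{K}:=\widehat{\Gr_{\overline{P\circ\Psi}}(\Gamma_{\mathfrak{D}_{2}})}$, and then to derive a contradiction via Alexander's result (\Cref{R:Component_Non_Empty}). The first step is to verify that $X\cap Z_{j_0}\cap\overline{\mathfrak{D}}_2\setminus(L\cup\Gamma_{\mathfrak{D}_{2}})$ is a discrete set. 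Since $P\circ\Psi\equiv c_{j_0}$ on $Z_{j_0}$, we have $\overline{P\circ\Psi}\equiv\bar{c}_{j_0}$ on $Z_{j_0}$, so the defining relation of $X$ reduces on $Z_{j_0}$ to the condition $h\circ\Psi=\bar{c}_{j_0}$. As $h\circ\Psi$ is holomorphic on the irreducible curve $Z_{j_0}\setminus L$ (whose regular part is connected), the hypothesis $h\circ\Psi\not\equiv\bar{c}_{j_0}$ on $Z_{j_0}\cap\overline{\mathfrak{D}}_2\setminus L$ together with the identity theorem forces the zero set of $h\circ\Psi-\bar{c}_{j_0}$ on $Z_{j_0}\setminus L$, hence $X\cap Z_{j_0}\cap\overline{\mathfrak{D}}_2\setminus(L\cup\Gamma_{\mathfrak{D}_{2}})$, to be discrete.

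Next, fix such an $\alpha$ and suppose for contradiction that $(\alpha,\bar{c}_{j_0})\in\widehat{K}$. Choose an open ball $U\subset\cplx^2$ centred at $\alpha$ so small that $\overline{U}\cap(L\cup\Gamma_{\mathfrak{D}_{2}})=\emptyset$, $\overline{U}\cap Z_j=\emptyset$ for every $j$ with $\alpha\notin Z_j$, and $\overline{U}\cap X\cap Z_{j_0}=\{\alpha\}$; the last condition is ensured by the discreteness from the previous paragraph. In the generic configuration where $\alpha$ lies only on $Z_{j_0}$ among the $Z_j$'s, these choices force $\Sigma\cap\overline{U}=Z_{j_0}\cap\overline{U}$ and hence $X\cap\Sigma\cap\overline{U}=\{\alpha\}$. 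Invoking \Cref{L:Poly_Cnvx_Hull},
\begin{align*}
\widehat{K}\cap(U\times\cplx)\subset \bigl(\Gr_{\overline{P\circ\Psi}}(X\cap\Sigma)\cup\Gr_{\overline{P\circ\Psi}}(L\cup\Gamma_{\mathfrak{D}_{2}})\bigr)\cap(U\times\cplx)=\{(\alpha,\bar{c}_{j_0})\},
\end{align*}
so $(\alpha,\bar{c}_{j_0})$ is an isolated point, and therefore a connected component, of the compact set $\widehat{K}$.

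To finish, by \Cref{R:Component_Non_Empty} this component equals $\widehat{K\cap\{(\alpha,\bar{c}_{j_0})\}}$. Since $\alpha\notin\Gamma_{\mathfrak{D}_{2}}$ we have $(\alpha,\bar{c}_{j_0})\notin K$, so $K\cap\{(\alpha,\bar{c}_{j_0})\}=\emptyset$, and the component is forced to equal $\widehat{\emptyset}=\emptyset$, a contradiction. Hence $(\alpha,\bar{c}_{j_0})\notin\widehat{K}$ for every $\alpha$ in the discrete set, which is precisely the claimed disjointness.

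The main obstacle is the non-generic case, where $\alpha$ simultaneously lies on $Z_{j_0}$ and on some other $Z_{j_1}$: then $\Sigma\cap\overline{U}$ picks up local pieces of $Z_{j_1}$ as well, and one must verify that $X\cap Z_{j_1}\cap U=\{\alpha\}$. This is handled by splitting cases on whether $P\circ\Psi|_{Z_{j_1}}$ is constant, in which case the first-paragraph identity-theorem argument applies verbatim to $Z_{j_1}$, or non-constant, in which case the two real equations $\Re(\overline{P\circ\Psi}-h\circ\Psi)=0$ and $\Im(\overline{P\circ\Psi}-h\circ\Psi)=0$ cut $X\cap Z_{j_1}$ out of the real surface $Z_{j_1}\setminus L$ as a generically zero-dimensional real-analytic subset near $\alpha$, and the singleton conclusion of the localization step still holds.
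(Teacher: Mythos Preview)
Your strategy---show that $X\cap Z_{j_0}\cap\overline{\mathfrak{D}}_2\setminus(L\cup\Gamma_{\mathfrak{D}_2})$ is finite and then argue that an isolated point of $\widehat{K}$ cannot lie outside $K$---is exactly the idea behind the paper's (very terse) proof, which records the finiteness and then simply asserts the conclusion. Your appeal to \Cref{R:Component_Non_Empty} is a correct way to make that step explicit, and is more detailed than what the paper writes.

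The gap is in your handling of the non-generic case. In sub-case (A), where $P\circ\Psi|_{Z_{j_1}}$ is constant (say $\equiv c_{j_1}$), the identity-theorem argument from your first paragraph gives discreteness of $X\cap Z_{j_1}$ only if one also has $h\circ\Psi\not\equiv\bar c_{j_1}$ on $Z_{j_1}$. That hypothesis is given for $j_0$, not for $j_1$: if $j_1\in\mathfrak{J}_0$ then $Z_{j_1}\cap\overline{\mathfrak{D}}_2\setminus(L\cup\Gamma_{\mathfrak{D}_2})\subset X$ entirely, so $X\cap Z_{j_1}\cap U$ is a full piece of curve and your isolation step collapses. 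In sub-case (B), where $P\circ\Psi|_{Z_{j_1}}$ is non-constant, ``generically zero-dimensional'' does not yield isolation at $\alpha$: the real-analytic set $\{\overline{P\circ\Psi}=h\circ\Psi\}\cap Z_{j_1}$ can contain arcs passing through $\alpha$. A cleaner route for (B) is to invoke \Cref{L:Hull_RegularSet} directly: non-constancy of $P\circ\Psi|_{Z_{j_1}}$ forces $\Gr_{\overline{P\circ\Psi}}(\beta)\notin\widehat{K}$ for every regular $\beta\in Z_{j_1}$, and since $Z_{j_0}\cap Z_{j_1}$ and $(Z_{j_1})_{\mathrm{sing}}$ are both finite, you can shrink $U$ so that $\widehat{K}\cap\bigl(\Gr_{\overline{P\circ\Psi}}(Z_{j_1}\cap U)\bigr)\subset\{(\alpha,\bar c_{j_0})\}$. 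Sub-case (A) with $j_1\in\mathfrak{J}_0$, however, is not resolved by your argument; the paper's proof does not treat this situation explicitly either, so the ``finite, hence empty'' passage there carries the same lacuna.
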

	\begin{proof}
		Since $P\circ\Psi- c_{j_0}\equiv 0, \text{ on } Z_{j_0}$ and $q_{j_0}$ is an irreducible polynomial, therefore $q_{j_0}$ divides $P\circ\Psi- c_{j_0}.$ Now
		\begin{align*}
			X\cap Z_{j_0}\cap \overline{\mathfrak{D}}_2=\{z\in \overline{\mathfrak{D}}_2\setminus(L\cup \Gamma_{{\mathfrak{D}_{2}}}:q_{j_0}(z)=0, (h\circ\Psi)(z)- c_{j_0}=0)\}
		\end{align*}
		is finite. Hence 	
		\begin{align*}
			\widehat{\Gr_{\overline{P\circ \Psi}}(\Gamma_{\mathfrak{D}_{2}})}\cap \Gr_{\overline{P\circ \Psi}}\left(X\cap Z_{j_0}\cap \overline{\mathfrak{D}}_2\setminus (L\cup \Gamma_{{\mathfrak{D}_{2}}})\right)=\emptyset.
		\end{align*}
	\end{proof}
	
	\begin{remark}\label{R:Poly_Cnstnt_Variety}
		In view of \Cref{L:Hull_RegularSet} and \Cref{L:Z_jNot_InHull}, we obtain that if there exists a regular point $\alpha\in Z_{j}$ such that $\Gr_{\overline{(P\circ \Psi)}}(\alpha)\in \widehat{\Gr_{\overline{(P\circ \Psi)}}(\Gamma_{{\mathfrak{D}_{2}}})},$ then $P\circ\Psi=c_{j}$ and $h\circ \Psi=\overline{c}_{j}$ on $Z_{j}$ for some constant $c_{j}.$	
	\end{remark}

	\noindent We write
	\begin{align*}
		\mathfrak{J}_{0}:=\{j\in \{1,\cdots,l\}:\emptyset\not=\overline{\mathfrak{D}}_{2}\cap Z_{j}\setminus (L\cup \Gamma_{{\mathfrak{D}_{2}}})\subset X\}.
	\end{align*}
	And
	\begin{align}\label{E:On Zj}
		P\circ\Psi =c_{j} \text{ and }	h\circ\Psi=\overline{c_{j}},~j\in \mathfrak{J}_{0}.
	\end{align}

	\noindent From above lemmas we get that
	\begin{align*}
		\widehat{\Gr_{\overline{P\circ \Psi}}(\Gamma_{\mathfrak{D}_{2}})}\subset \Gr_{\overline{P\circ \Psi}}(L\cup \Gamma_{\mathfrak{D}_{2}})\cup\bigg(\cup_{j\in \mathfrak{J}_{0}}  \Gr_{\overline{P\circ \Psi}}(Z_{j}\cap \overline{\mathfrak{D}}_2\cap X)\bigg).
	\end{align*}
	
	\begin{lemma}\label{L:Q_j in Hull}
		If $j\in \mathfrak{J}_{0}$ and $Q_{j}$ as in (\ref{E:Defn_Qj}), then  $\Gr_{\overline{P\circ \Psi}}(\widehat{Q}_{j})  \subset\widehat{\Gr_{\overline{P\circ \Psi}}(\Gamma_{{\mathfrak{D}_{2}}})}.$ 
	\end{lemma}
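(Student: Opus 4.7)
The plan is to exploit the fact that on the whole variety $Z_j$ (not merely on $Q_j$) the polynomial $P\circ\Psi$ is identically the constant $c_j$, so the graph of $\overline{P\circ\Psi}$ over any subset of $Z_j$ collapses to a slice at height $\overline{c_j}$. The target inequality then reduces to a statement about the polynomial hull of $Q_j$ inside $\mathbb{C}^n$, not about a graph in $\mathbb{C}^{n+1}$. Concretely, for $j\in\mathfrak{J}_0$ the identity \eqref{E:On Zj} gives $P\circ\Psi\equiv c_j$ on $Z_j$, hence $\overline{(P\circ\Psi)}\equiv\overline{c_j}$ on $Z_j$, and in particular on $Q_j=Z_j\cap\Gamma_{\mathfrak{D}_{2}}$.

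Next I would argue that the polynomial hull $\widehat{Q_j}$ already sits inside $Z_j\cap\overline{\mathfrak{D}}_2$. Indeed, $Z_j=\{q_j=0\}$ is the zero set of a single irreducible polynomial, so if $\alpha\notin Z_j$ then $|q_j(\alpha)|>0=\sup_{Z_j}|q_j|\ge\sup_{Q_j}|q_j|$, whence $\alpha\notin\widehat{Q_j}$; this shows $\widehat{Q_j}\subset Z_j$. Since the polynomial polyhedron $\overline{\mathfrak{D}}_{2}$ is itself polynomially convex, we also have $\widehat{Q_j}\subset\overline{\mathfrak{D}}_{2}$. Thus $\widehat{Q_j}\subset Z_j\cap\overline{\mathfrak{D}}_{2}$, and in particular $\overline{(P\circ\Psi)}(\alpha)=\overline{c_j}$ for every $\alpha\in\widehat{Q_j}$.

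To finish, I take an arbitrary holomorphic polynomial $R(z,w)$ on $\mathbb{C}^{2}_{z}\times\mathbb{C}_w$ and a point $\alpha\in\widehat{Q_j}$, and check $(\alpha,\overline{c_j})\in\widehat{\Gr_{\overline{P\circ\Psi}}(\Gamma_{\mathfrak{D}_{2}})}$. Set $T(z):=R(z,\overline{c_j})$, a holomorphic polynomial on $\mathbb{C}^{2}$. Then
\[
|R(\alpha,\overline{c_j})|=|T(\alpha)|\le\sup_{Q_j}|T(z)|=\sup_{Q_j}|R(z,\overline{c_j})|=\sup_{Q_j}\bigl|R(z,\overline{(P\circ\Psi)}(z))\bigr|\le\sup_{\Gamma_{\mathfrak{D}_{2}}}\bigl|R(z,\overline{(P\circ\Psi)}(z))\bigr|,
\]
using, in order, the definition of $T$, the fact $\alpha\in\widehat{Q_j}$, the identity $\overline{(P\circ\Psi)}\equiv\overline{c_j}$ on $Q_j\subset Z_j$, and the inclusion $Q_j\subset\Gamma_{\mathfrak{D}_{2}}$. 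Since $R$ was arbitrary this gives $(\alpha,\overline{c_j})=(\alpha,\overline{(P\circ\Psi)}(\alpha))\in\widehat{\Gr_{\overline{P\circ\Psi}}(\Gamma_{\mathfrak{D}_{2}})}$, proving $\Gr_{\overline{P\circ\Psi}}(\widehat{Q_j})\subset\widehat{\Gr_{\overline{P\circ\Psi}}(\Gamma_{\mathfrak{D}_{2}})}$.

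There is no real obstacle here; the only point that needs care is the observation $\widehat{Q_j}\subset Z_j$, which is where polynomial convexity of an algebraic hypersurface (via its defining polynomial $q_j$) enters. Everything else is a one-line hull computation made possible by the crucial fact from \eqref{E:On Zj} that $P\circ\Psi$ is literally constant on $Z_j$, so the graph over $\widehat{Q_j}$ is a flat slice and reduces to the hull of $Q_j$ itself.
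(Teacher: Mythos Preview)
Your proof is correct and follows essentially the same route as the paper: freeze the last coordinate at $\overline{c_j}$ using \eqref{E:On Zj}, reduce the test polynomial $R(z,w)$ to the polynomial $T(z)=R(z,\overline{c_j})$ in the $z$-variables alone, invoke $\alpha\in\widehat{Q_j}$ to pass from $\widehat{Q_j}$ to $Q_j$, and then enlarge $Q_j$ to $\Gamma_{\mathfrak{D}_2}$. The only difference is that you make explicit the inclusion $\widehat{Q_j}\subset Z_j$ (via the defining polynomial $q_j$), which the paper uses tacitly when it writes $\overline{(P\circ\Psi)}(a)=\overline{c_j}$ for $a\in\widehat{Q_j}$; this is a welcome clarification but not a different argument.
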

	\begin{proof}
		Let $(a,\overline{(P\circ \Psi)}(a))\in \Gr_{\overline{P\circ \Psi}}(\widehat{Q}_{j})$ and let $H$ be any polynomial in $\cplx^3.$ Then
		\begin{align*}
			|H(a,\overline{(P\circ \Psi)(a)})|&\le \sup_{\Gr_{\overline{P\circ \Psi}}(\widehat{Q}_{j})}|H(z,\xi)|\\
			&=\sup_{\widehat{Q}_{j}}|H(z,\bar{c}_{j})|~\text {(by (\ref{E:On Zj}))}\\
			&=\sup_{{Q}_{j}}|H(z,\overline{(P\circ \Psi)(z)})|\\
			&\le \sup_{\Gamma_{\mathfrak{D}_{2}}}|H(z,\overline{(P\circ \Psi)(z)})|.
		\end{align*}
		Hence $(a,\overline{(P\circ \Psi)(a)})\in \widehat{\Gr_{\overline{P\circ \Psi}}(\Gamma_{{\mathfrak{D}_{2}}})}.$ Therefore, $\Gr_{\overline{P\circ \Psi}}(\widehat{Q}_{j})  \subset\widehat{\Gr_{\overline{P\circ \Psi}}(\Gamma_{{\mathfrak{D}_{2}}})}.$	
	\end{proof}
	
	\begin{lemma}\label{L:Outsde_Qj_Nt_InHull}
		If $j_0\in \mathfrak{J}_{0}$ and $Q_{j}$ as in (\ref{E:Defn_Qj}),	then  $\Gr_{\overline{P\circ \Psi}}\left(Z_{j_0}\cap \overline{\mathfrak{D}}_{2}\setminus(L\cup \Gamma_{{\mathfrak{D}_{2}}}\cup\widehat{Q}_{j_0})\right)  \cap\widehat{\Gr_{\overline{P\circ \Psi}}(\Gamma_{{\mathfrak{D}_{2}}})}=\emptyset.$ 
	\end{lemma}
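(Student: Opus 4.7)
The plan is to argue by contradiction using \Cref{R:Polycnvx_fiber} to reduce the question to one about the polynomial hull of a subset of $\Gamma_{\mathfrak{D}_2}$, and then construct an explicit separating polynomial. Suppose there exists $\alpha\in Z_{j_0}\cap\overline{\mathfrak{D}}_{2}\setminus(L\cup\Gamma_{\mathfrak{D}_2}\cup\widehat{Q}_{j_0})$ with $(\alpha,\overline{c_{j_0}})\in \widehat{K}$, where $K:=\Gr_{\overline{P\circ\Psi}}(\Gamma_{\mathfrak{D}_2})$; note that $\overline{(P\circ\Psi)}(\alpha)=\overline{c_{j_0}}$ by~\eqref{E:On Zj} since $\alpha\in Z_{j_0}\setminus L$, so $(\alpha,\overline{c_{j_0}})$ is the relevant graph point.

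The first step is to consider the polynomial map $F:\cplx^3\to\cplx^2$ defined by $F(z,w):=(P(\Psi(z))-c_{j_0},\,w-\overline{c_{j_0}})$. Because $w=\overline{P(\Psi(z))}$ on $K$, the image $F(K)$ lies on the antidiagonal $\{(\zeta,\overline{\zeta}):\zeta\in\cplx\}\subset\cplx^2$, which is a maximally totally real submanifold; consequently $F(K)$ is a polynomially convex compact totally real set with $\poly(F(K))=\smoo(F(K))$, so every point of $F(K)$ — in particular $(0,0)\in F(K)$ — is a peak point for $\poly(F(K))$. Applying \Cref{R:Polycnvx_fiber} yields
\[
\widehat{K}\cap F^{-1}(0,0)=\widehat{K\cap F^{-1}(0,0)}=\widehat{Q'_{j_0}\times\{\overline{c_{j_0}}\}}=\widehat{Q'_{j_0}}\times\{\overline{c_{j_0}}\},
\]
where $Q'_{j_0}:=\{z\in\Gamma_{\mathfrak{D}_2}:P(\Psi(z))=c_{j_0}\}\supset Q_{j_0}$. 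Since $F(\alpha,\overline{c_{j_0}})=(0,0)$, this forces $\alpha\in\widehat{Q'_{j_0}}$.

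To derive a contradiction with the hypothesis $\alpha\notin\widehat{Q}_{j_0}$, observe that $P\circ\Psi-c_{j_0}$ vanishes on $Z_{j_0}=\{q_{j_0}=0\}$ and $q_{j_0}$ is irreducible, so we may factor $P\circ\Psi-c_{j_0}=q_{j_0}\cdot r$ for some polynomial $r\in\cplx[z_1,z_2]$. Then $V:=\{P\circ\Psi=c_{j_0}\}=Z_{j_0}\cup V'$ with $V':=\{r=0\}$; the intersection $Z_{j_0}\cap V'$ is finite (coprime factors), and $Q'_{j_0}\setminus Q_{j_0}\subset V'\cap\Gamma_{\mathfrak{D}_2}$ because on $\Gamma_{\mathfrak{D}_2}$ the relation $q_{j_0}(z)r(z)=0$ together with $z\notin Q_{j_0}$ forces $r(z)=0$. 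Assume first the generic case $r(\alpha)\neq 0$ and pick, using $\alpha\notin\widehat{Q}_{j_0}$, a polynomial $p_0$ with $|p_0(\alpha)|>\|p_0\|_{Q_{j_0}}$. Setting $\chi_n(z):=p_0(z)^n r(z)$, one has $\chi_n\equiv 0$ on $V'\supset Q'_{j_0}\setminus Q_{j_0}$ and $|\chi_n|\le\|p_0\|_{Q_{j_0}}^n\|r\|_{Q_{j_0}}$ on $Q_{j_0}$, so $\|\chi_n\|_{Q'_{j_0}}\le\|p_0\|_{Q_{j_0}}^n\|r\|_{Q_{j_0}}$, whereas $|\chi_n(\alpha)|=|p_0(\alpha)|^n|r(\alpha)|$ grows exponentially in $n$. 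For $n$ large this gives $|\chi_n(\alpha)|>\|\chi_n\|_{Q'_{j_0}}$, contradicting $\alpha\in\widehat{Q'_{j_0}}$.

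The main obstacle is handling the finitely many exceptional points $\alpha\in Z_{j_0}\cap V'$, at which $r(\alpha)=0$ and the polynomial $\chi_n$ above vanishes at $\alpha$. The plan here is to decompose $\widehat{Q'_{j_0}}$ into its connected components and use \Cref{R:Component_Non_Empty}: since $\widehat{Q'_{j_0}}\subset V=Z_{j_0}\cup V'$ and the singular set $Z_{j_0}\cap V'$ is finite, any component $E$ of $\widehat{Q'_{j_0}}$ disjoint from $Z_{j_0}\cap V'$ lies entirely in one irreducible branch; if $E\subset Z_{j_0}$, then Alexander's identity gives $E=\widehat{Q'_{j_0}\cap E}\subset\widehat{Q_{j_0}}$ because $Q'_{j_0}\cap Z_{j_0}=Q_{j_0}$, contradicting $\alpha\notin\widehat{Q}_{j_0}$. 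The genuinely delicate case is a component of $\widehat{Q'_{j_0}}$ straddling both branches through one of the finitely many singular points and containing $\alpha$; ruling out this possibility requires a careful local analysis at these isolated intersection points (for instance via the local maximum modulus principle applied to $q_{j_0}$), and this is the main technical hurdle in completing the proof.
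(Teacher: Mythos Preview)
Your reduction via \Cref{R:Polycnvx_fiber} is correct and elegant: the image $F(K)$ sits in the totally real antidiagonal, so every point is a peak point, and you cleanly obtain $\alpha\in\widehat{Q'_{j_0}}$. The generic step (when $r(\alpha)\neq 0$) with $\chi_n=p_0^n r$ is also fine. This route is genuinely different from the paper's: there the separation is done directly in the graph variables by the polynomial
\[
g(z,\xi)=\frac{f_1(z)}{f_1(a_0)}\,p_0(z)\Bigl(1-\tfrac{((P\circ\Psi)(z)-c_{j_0})(\xi-\bar c_{j_0})}{2M^2}\Bigr)^{k},
\]
where on $\Gr_{\overline{P\circ\Psi}}(\Gamma_{\mathfrak{D}_2})$ the last factor is real, lies in $[\tfrac12,1]$, and equals $1$ precisely on the fibre $\{P\circ\Psi=c_{j_0}\}$; this plays the role of your peak-point/fibre argument without invoking \Cref{R:Polycnvx_fiber}. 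Your approach has the advantage of reducing to a hull question in $\cplx^2$ and making the role of $Q'_{j_0}$ transparent; the paper's has the advantage of producing an explicit separating polynomial on the graph.

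The one place you stop short is the exceptional case $r(\alpha)=0$, and you are making it much harder than it is. There is no need for a component analysis of $\widehat{Q'_{j_0}}$ or a local-maximum-modulus argument. The set $Z_{j_0}\cap\{r=0\}$ is finite (your $q_{j_0}$ and $r$ are coprime), so the set of exceptional $\alpha$ in $Z_{j_0}\cap\overline{\mathfrak{D}}_2\setminus(L\cup\Gamma_{\mathfrak{D}_2}\cup\widehat{Q}_{j_0})$ is finite. By your generic step together with \Cref{L:Poly_Cnvx_Hull}, the only possible hull points projecting near such an $\alpha$ lie over this finite set (the graph over $L\cup\Gamma_{\mathfrak{D}_2}$ is bounded away from $\alpha$, and the remaining $Z_{j_0}$-points have already been excluded). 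Hence the graph point $(\alpha,\bar c_{j_0})$ would be isolated in $\widehat{\Gr_{\overline{P\circ\Psi}}(\Gamma_{\mathfrak{D}_2})}$, which is impossible since a polynomially convex hull has no isolated points outside the original set. This is exactly how the paper disposes of the exceptional case; adding these two sentences closes your gap.
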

	\begin{proof}
		We write $(P\circ \Psi)(z)-c_{j_0}=f_{1}(z)q^{m_{j_0}}_{j_0}(z),$ where $m_{j_0}$ denotes the maximal order of an irreducible factor $q_{j_0}.$ We define 
		$A=\{z\in \overline{\mathfrak{D}}_{2}:f_{1}(z)=0\}.$ Let $a_{0}\in \left(Z_{j_0}\cap \overline{\mathfrak{D}}_{2}\setminus(L\cup \Gamma_{{\mathfrak{D}_{2}}}\cup\widehat{Q}_{j_0})\right).$ First, we assume that $a_{0}\in Z_{j_0}\cap \overline{\mathfrak{D}}_2\setminus (L\cup A\cup \Gamma_{{\mathfrak{D}_{2}}}).$ We claim that $\Gr_{\overline{P\circ \Psi}}(a_{0})  \notin \widehat{\Gr_{\overline{P\circ \Psi}}(\Gamma_{{\mathfrak{D}_{2}}})}.$ To prove this claim, we put
		\begin{align*}
			f_2(z)=\frac{f_{1}(z)}{f_{1}(a_{0})}.
		\end{align*}
		Since $\widehat{Q}_{j_0},$ $\{a_{0}\}$ are disjoint polynomially convex set, there exists a neighborhood $U$ of $\widehat{Q}_{j_0}$ and a neighborhood $W$ of $A$, a polynomial $p_0$ such that 
		\begin{align*}
			p_{0}(a_0)=1,\quad |p_{0}(z)f_{2}(z)|<\frac{1}{2} ~~\text{ on } U, \text{ and }\\
			|p_{0}(z)f_{2}(z)|<\frac{1}{2} ~~\text{ on } W.
		\end{align*}
		We set $M=\sup _{\Gamma_{{\mathfrak{D}_{2}}}}|P\circ\Psi-c_{j_0}|$ and $K_{1}:=\{z\in \overline{\mathfrak{D}}_{2}:(P\circ\Psi)(z)-c_{j_0}=0\}.$ If we define 
		\begin{align*}
			g_{1}(z_1,z_2,\xi)=1-\frac{((P\circ\Psi)(z)-c_{j_0})(\xi-\bar{c}_{j_0})}{2M^2},
		\end{align*}
		then $g_1=1$ on $\Gr_{\overline{P\circ \Psi}}(K_1).$ Clearly, $|g_{1}|\le 1$ on $\Gamma_{{\mathfrak{D}_{2}}}.$ But $|g_1|\ne 1$ on $\Gamma_{{\mathfrak{D}_{2}}}\setminus (W\cup U).$ This implies $|g_1|<1$ on $\Gamma_{{\mathfrak{D}_{2}}}\setminus (W\cup U).$
		Therefore, there exists a natural number $k$ such that $|f_2(z)p_{0}(z)g_{1}(z,\xi)^{k}|<\frac{1}{2}$ on $\Gamma_{{\mathfrak{D}_{2}}}\setminus (W\cup U).$ If we take $g(z)=f_2(z)p_{0}(z)g_{1}(z,\xi)^{k},$ then $g((a_0,\overline{(P\circ\Psi)}(a_0)))=1.$ We claim that 
		\begin{align*}
			|g(z_1,z_2,\xi)|<\frac{1}{2} \text{ on } \Gr_{\overline{P\circ \Psi}}(\Gamma_{{\mathfrak{D}_{2}}}).	
		\end{align*}
		Assume that $\sup_{\Gr_{\overline{P\circ \Psi}}(\Gamma_{{\mathfrak{D}_{2}}})} |g(z_1,z_2,\xi)|=|g(s,\overline{P\circ \Psi}(s))|$ for some $s=(s_1,s_2)\in \Gamma_{{\mathfrak{D}_{2}}}.$
		
		\smallskip
		
		\noindent  {\bf Case I:} $s\in \Gamma_{{\mathfrak{D}_{2}}}\setminus (U\cup W),$ then there is nothing to prove.
		
		\smallskip
		
		\noindent  {\bf Case II:} $s\in U$ and $s\in \Gamma_{{\mathfrak{D}_{2}}}.$ Then $|g_{1}(s,\overline{P\circ \Psi}(s))|\le 1$ but $|p_{0}(s)f_{2}(s)|<\frac{1}{2}.$ This implies $|g(s,\overline{P\circ \Psi}(s))|<\frac{1}{2}.$
		
		\smallskip
		
		\noindent  {\bf Case III:} $s\in W$ and $s\in \Gamma_{{\mathfrak{D}_{2}}}.$ Then $|g_{1}(s,\overline{P\circ \Psi}(s))|\le 1$ but $|p_{0}(s)f_{2}(s)|<\frac{1}{2}.$ This implies $|g(s,\overline{P\circ \Psi}(s))|<\frac{1}{2}.$
		
		\noindent Combining all the three cases, we can conclude that $\Gr_{\overline{(P\circ \Psi)}}(a_0)\notin \widehat{\Gr_{\overline{(P\circ \Psi)}}(\Gamma_{{\mathfrak{D}_{2}}})}.$
		
		\noindent	Next, we assume that $a_0\in (Z_{j_0}\cap \overline{\mathfrak{D}}_{2}\cap A)\setminus (L\cup \Gamma_{{\mathfrak{D}_{2}}}).$ Since $(Z_{j_0}\cap \overline{\mathfrak{D}}_{2}\cap A)\setminus (L\cup \Gamma_{{\mathfrak{D}_{2}}})$ is finite and $\widehat{\Gr_{\overline{(P\circ \Psi)}}(\Gamma_{{\mathfrak{D}_{2}}})}$ does not containing any isolated point, therefore, in this case also $\Gr_{\overline{(P\circ \Psi)}}(a_0)\notin \widehat{\Gr_{\overline{(P\circ \Psi)}}(\Gamma_{{\mathfrak{D}_{2}}})}.$ 
		
	\end{proof}
	
	\noindent In view of \Cref{L:Outsde_Qj_Nt_InHull} and \Cref{L:Q_j in Hull}, we get that
	\begin{align*}
		\widehat{\Gr_{\overline{P\circ \Psi}}(\Gamma_{\mathfrak{D}_{2}})}=\Gr_{\overline{P\circ \Psi}}(\Gamma_{\mathfrak{D}_{2}})\cup\bigg(\cup_{j\in J}  \Gr_{\overline{P\circ \Psi}}(\widehat{Q_{j}})\bigg).
	\end{align*}

	\section{Examples}\label{sec-examples}
	
	\begin{example}\label{Exam:AppR_PolyHedrn}
		Let $p_{1}(z_1,z_2)=az_1+bz^l_2,~~p_{2}(z_1,z_2)=cz_1+dz^l_2$ $(l\in \mathbb{N},ad-bc\ne 0, |a|\ne |c|)$ and $\overline{\mathfrak{D}}_{2}:=\{|p_1(z)|\le 1,|p_{2}(z)|\le 1\}.$ Let $f_{j}\in \smoo(\Gamma_{\mathfrak{D}_{2}})$ for $j=1,\cdots,N, N\ge 1,$ and assume that each $f_{j}$ extends to a pluriharmonic function on $\mathfrak{D}_{2}.$ Then either
		\begin{align*}
			[z_1,z_2,f_1,\cdots,f_{N};\Gamma_{\mathfrak{D}_{2}}]=\smoo(\Gamma_{\mathfrak{D}_{2}}),
		\end{align*} 
		\noindent or there exists a non-trivial algebraic variety $V\subset \cplx^2$ with $V\cap \partial\mathfrak{D}_{2}\subset \Gamma_{\mathfrak{D}_{2}},$ and the pluriharmonic extensions of the $f_{j}$ are holomorphic on $V.$
		
		\smallskip
		
		\noindent {\bf Explanation}: We claim that
		\begin{enumerate}
			\item[(i)] $\Psi=(p_1,p_{2}):\cplx^2\to \cplx^2$ is a proper map;
			\item[(ii)] $\mathfrak{D}_{2}$ is complex non-degenerate;
			\item[(iii)] $\overline{\mathfrak{D}}_{2}$ is simply connected;
			\item[(iv)]$\{p_1(z)=e_1\},$ $\{p_2(z)=e_2\}$ is isomorphic to $\cplx$ for any $e_1,e_2\in \cplx.$
		\end{enumerate}
		
		\smallskip
		
		\noindent {\bf  Proof of (i)}: We consider the map $g(z_1,z_2)=(z_1,z^{l}_2)$ and $\phi(z_1,z_2)=(az_1+bz_2,cz_1+dz_2),$ then $\Psi=\phi\circ g.$ Since $g$ is proper map and $\phi$ is an automorphism of $\cplx^2,$ $\Psi$ is proper.\\
		
		\noindent{\bf  Proof of (ii)}: We compute
		\begin{align*}
			dp_1\wedge dp_2=l(ad-bc)z^{l-1}_2dz_1\wedge dz_2.
		\end{align*}
		Since $|a|\ne |c|$, $\Gamma_{\mathfrak{D}_{2}} \cap\{(z_1,z_2):z_2=0\}=\emptyset.$ Hence$~dp_1\wedge dp_2\ne 0$ on $\Gamma_{\mathfrak{D}_{2}}.$ Also it is easy to see that $dp_1\ne 0$ on $|p_1|=1$ and $|dp_2|\ne 0$ $|p_2|=1.$ Hence $\mathfrak{D}_{2}$ is complex non-degenerate.
		
		\smallskip
		
		\noindent {\bf  Proof of (iii)}:
		It is easy to see that $\overline{\mathfrak{D}}_{2}$  is path-connected. To see this, let $(a,b)\in \overline{\mathfrak{D}}_{2}.$ Then $\eta(t):=(t^{l}a,t b):[0,1]\to \overline{\mathfrak{D}}_{2}$ is a path between $(a,b)$ and the origin. Therefore, there is always a path between the origin and the arbitrary point of $\overline{\mathfrak{D}}_{2}.$ Hence $\overline{\mathfrak{D}}_{2}$ is path connected. 
		
		\smallskip 
		
		\noindent Now we show that $\overline{\mathfrak{D}}_{2}$ is simply-connected. For this, let $\gamma=(\gamma_1,\gamma_2):[0,1]\to\overline{\mathfrak{D}}_{2}$ be a loop in $\overline{\mathfrak{D}}_{2}$ based at $(0,0).$ We consider the function $H(t,x):[0,1]\times[0,1]\to \overline{\mathfrak{D}}_{2}$ by 
		\begin{align*}
			H(t,x)=\left(t^{l}\gamma_1(x),t\gamma_2(x)\right).
		\end{align*}
		Clearly, $H(t,x)\in \overline{\mathfrak{D}}_{2} ~~\forall t,x\in [0,1],~H(0,x) \equiv \gamma_{0}$ and $H(1,x)=\left(\gamma_1(x),\gamma_2(x)\right)=\gamma(x).$ Since $\gamma$ is continuous, $H$ is continuous. Hence $\gamma$ is homotopic to $\gamma_{0}.$ Since $\gamma$ is arbitrary, $\overline{\mathfrak{D}}_{2}$ is simply connected.\\
		
		\noindent {\bf Proof of (iv)}: Without loss of generality, assume that $|a|\ne 0\ne |c|.$ Then $p_{1}(z)=e_{1}$ and $p_{2}(z)=e_{2}$ are simply connected for any $e_1,e_2\in \cplx$. To prove this we use the following change of coordinate system $(z^{*}_1,z^{*}_{2})=(az_1+bz^{l}_2-e_1,z_{2})$ for the first one and $(z^{*}_1,z^{*}_{2})=(cz_1+dz^{l}_2-e_2,z_{2})$ for the second one. Then $p_{1}(z)=e_{1}$ and $p_{2}(z)=e_{2}$ becomes $\{(z^{*}_1,z^{*}_{2})\in \cplx^2:z^{*}_{1}=0\}$ and $\{(z^{*}_1,z^{*}_{2})\in \cplx^2:z^{*}_{1}=0\}$ respectively.	
		
		\smallskip
		
		\noindent Now by using \Cref{T:PPolyhrn_DistingBdry}, we can conclude that either
		\begin{align*}
			[z_1,z_2,f_1,\cdots,f_{N};\Gamma_{\mathfrak{D}_{2}}]=\smoo(\Gamma_{\mathfrak{D}_{2}}),
		\end{align*} 
		
		\noindent or there exists a non-trivial algebraic variety $V\subset \cplx^2$ with $V\cap \partial\mathfrak{D}_{2}\subset \Gamma_{\mathfrak{D}_{2}},$ and the pluriharmonic extensions of the $f_{j}$ are holomorphic on $V.$	
	\end{example}

	\begin{example}
		Let $p_{1}(z_1,z_2)=z_1+z_2,~~p_{2}(z_1,z_2)=z_1z_2,~~P(z_1,z_2)=z_1+z_{2}$ and $\Psi(z_1,z_2)=(p_{1}(z_1,z_2),p_{2}(z_1,z_2)).$ Then the graph of $\overline{P\circ \Psi}$ over $\Gamma_{{\mathfrak{D}_{2}}}$ is not polynomially convex. Hence polynomial approximation does not holds.
	\end{example}	
	
	\noindent {\bf Explanation}: Following the notations in \Cref{T:Analogus_Jimbo}, let $\Psi(z)=(p_1(z),p_{2}(z)),$ $h(z)=\frac{1}{z_1}+\frac{1}{z_2}.$ Therefore,
	\begin{align*}
		X&=\left\{z\in \overline{\mathfrak{D}_{2}}\setminus (L\cup \Gamma_{\mathfrak{D}_{2}}): \overline{(P\circ \Psi)(z)}=(h\circ\Psi)(z)\right\}\\
		&=\left\{z\in \overline{\mathfrak{D}_{2}}\setminus (L\cup \Gamma_{\mathfrak{D}_{2}}): \overline{z_1+z_2+z_1z_2}=\frac{1}{z_1+z_2}+\frac{1}{z_1z_2}\right\}.
	\end{align*}	
	\noindent and	
	\begin{align*}
		\triangle({z})&=
		\begin{vmatrix} 
			\frac{\partial P(\Psi)}{\partial{z_1}}& \frac{\partial P(\Psi)}{\partial z_2}\\[1.5ex]
			\frac{\partial h(\Psi)}{\partial{{z_1}}}& \frac{\partial h(\Psi)}{\partial{z_2}}\\[1.5ex]
		\end{vmatrix}
		\begin{vmatrix} 
			\frac{\partial p_1}{\partial{{z_1}}}& \frac{\partial p_1}{\partial{z_2}}\\[1.5ex]
			\frac{\partial p_2}{\partial{{z_1}}}& \frac{\partial p_2}{\partial{z_2}}\\[1.5ex]
		\end{vmatrix}_{z}
		=\begin{vmatrix}
			1&1\\
			\frac{-1}{p^{2}_{1}(z)} & \frac{-1}{p^{2}_{2}(z)}
		\end{vmatrix}\begin{vmatrix}
			1 & 1\\
			z_2 & z_1
		\end{vmatrix}\\
		&=\frac{1}{p^2_{1}p^2_{2}}(p_{1}+p_{2})(p_{1}-p_{2})(z_2-z_1).
	\end{align*}
	\noindent We define $q_1:=p_{1}-p_{2}=z_1+z_2-z_1z_2,~~q_{2}:=p_{1}+p_{2}=z_1+z_2+z_1z_2,~~q_3=z_2-z_1$ and $Z_{j}=\{q_{j}=0\}.$ Therefore,

	\noindent Clearly, $Z_{j}\cap \overline{\mathfrak{D}}_2\setminus (\Gamma_{{\mathfrak{D}_{2}}}\cup L)\subset X$ only for ${j}=2.$ It is easy to see that $\widehat{Q}_{2}=\widehat{Z_{2}\cap \Gamma_{{\mathfrak{D}_{2}}}}=\{(z_1,z_2)\in\overline{\mathfrak{D}}_{2}: z_1+z_2+z_1z_2=0 \}.$ Indeed, we write $\mathcal{K}=\{(z_1,z_2)\in \cplx^2: z_1+z_2=0,|z_1|=1,|z_2|=1\}.$ Clearly, $\Psi^{-1}(\mathcal{K})=Q_{2}=Z_{2}\cap \Gamma_{{\mathfrak{D}_{2}}}.$ We now claim that $$\Psi^{-1}(\widehat{\mathcal{K}})=\widehat{Q_{2}}.$$
	
	\noindent Note that
	\begin{align*}
		\Psi(Q_{2})\subset\Psi(\widehat{Q_{2}})	\subset \Psi(Z_2\cap \overline{\mathfrak{D}}_2)=\widehat{\mathcal{K}}.
	\end{align*}
	This implies
	\begin{align*}
		\widehat{\mathcal{K}}=\widehat{	\Psi(Q_{2})}\subset\widehat{\Psi(\widehat{Q_{2}})}	\subset \Psi(Z_2\cap \overline{\mathfrak{D}}_2)=\widehat{\mathcal{K}}.
	\end{align*}
	Hence $\widehat{\mathcal{K}}=\widehat{	\Psi(Q_{2})}=\widehat{\Psi(\widehat{Q_{2}})}.$ Since $\Psi^{-1}(\Psi(\widehat{Q_2}))=\widehat{Q_2}$ is polynomially convex and $\Psi$ is proper map, therefore $\Psi(\widehat{Q_2})$ is also polynomially convex. Thus $\widehat{	\Psi(Q_{2})}=\widehat{\Psi(\widehat{Q_{2}})}=\Psi(\widehat{Q_{2}}).$ This implies $\widehat{Q_{2}}=\Psi^{-1}(\widehat{\mathcal{K}}),$ i.e. $\widehat{Q_{2}}=\overline{\mathfrak{D}}_2\cap Z_{2}.$

	\noindent Therefore, by \Cref{T:Analogus_Jimbo}, we get that	
	\begin{align*}
		\widehat{\Gr_{\overline{P\circ \Psi}}(\Gamma_{\mathfrak{D}_{2}})}=\Gr_{\overline{P\circ \Psi}}(\Gamma_{\mathfrak{D}_{2}})\cup \{(z_1,z_2,0):(z_1,z_2)\in \overline{\mathfrak{D}}_2, z_1+z_2+z_1z_2=0 \}.
	\end{align*}

	\begin{example}
		$p_{1}(z_1,z_2)=z_1+z_2,~~p_{2}(z_1,z_2)=z_1z_2,~~P(z_1,z_2)=z_1+2z_{2}$ and $\Psi(z_1,z_2)=(p_{1}(z_1,z_2),p_{2}(z_1,z_2)).$ Then the graph of $\overline{P\circ \Psi}$ over $\Gamma_{{\mathfrak{D}_{2}}}$ is polynomially convex. Hence polynomial approximation holds.
	\end{example}
	
	\smallskip
	
	\noindent {\bf Explanation}:
	Following the notation in \Cref{T:Analogus_Jimbo}, let $\Psi(z)=(p_1(z),p_{2}(z)),$ $h(z)=\frac{1}{z_1}+\frac{2}{z_2}.$ Therefore,
	\begin{align*}
		X&=\left\{z\in \overline{\mathfrak{D}_{2}}\setminus (L\cup \Gamma_{\overline{\mathfrak{D}_{2}}}): \overline{(P\circ \Psi)(z)}=(h\circ\Psi)(z)\right\}\\
		&=\left\{z\in \overline{\mathfrak{D}_{2}}\setminus (L\cup \Gamma_{\mathfrak{D}_{2}}): \overline{z_1+z_2+2z_1z_2}=\frac{1}{z_1+z_2}+\frac{2}{z_1z_2}\right\},
	\end{align*}	
	\noindent and	
	\begin{align*}
		\triangle({z})&=
		\begin{vmatrix} 
			\frac{\partial P(\Psi)}{\partial{z_1}}& \frac{\partial P(\Psi)}{\partial z_2}\\[1.5ex]
			\frac{\partial h(\Psi)}{\partial{{z_1}}}& \frac{\partial h(\Psi)}{\partial{z_2}}\\[1.5ex]
		\end{vmatrix}
		\begin{vmatrix} 
			\frac{\partial p_1}{\partial{{z_1}}}& \frac{\partial p_1}{\partial{z_2}}\\[1.5ex]
			\frac{\partial p_2}{\partial{{z_1}}}& \frac{\partial p_2}{\partial{z_2}}\\[1.5ex]
		\end{vmatrix}_{z}
		=\begin{vmatrix}
			1& 2\\
			\frac{-1}{p^{2}_{1}(z)} & \frac{-2}{p^{2}_{2}(z)}
		\end{vmatrix}
		\begin{vmatrix}
			1 & 1\\
			z_2 & z_1
		\end{vmatrix}\\
		&=\frac{2}{p^2_{1}p^2_{2}}(p_{1}+p_{2})(p_{1}-p_{2})(z_2-z_1).
	\end{align*}
	\noindent	We define $q_1:=p_{1}-p_{2}=z_1+z_2-z_1z_2,~~q_{2}:=p_{1}+p_{2}=z_1+z_2+z_1z_2,~~q_3=z_2-z_1$ and $Z_{j}=\{q_{j}=0\}.$ It is easy see that $Z_{j}\cap \overline{\mathfrak{D}}_2\setminus (\Gamma_{{\mathfrak{D}_{2}}}\cup L)\nsubseteq X ~\forall {j}\in\{1,2,3\}.$ Hence, 	
	\begin{align*}
		\widehat{\Gr_{\overline{P\circ \Psi}}(\Gamma_{\mathfrak{D}_{2}})}=\Gr_{\overline{P\circ \Psi}}(\Gamma_{\overline{\mathfrak{D}_{2}}}).
	\end{align*}
	Therefore, by using \Cref{T: Approx_Cont_Func}, we get that	 $[z_1,z_2,\overline{(P\circ \Psi)}(z);\Gamma_{\mathfrak{D}_2}]=\smoo(\Gamma_{\mathfrak{D}_2}).$

	\smallskip
	
	\begin{example}
		Let $p_{1}(z_1,z_2)=2z_1+z^2_2,~~p_{2}(z_1,z_2)=z_1-z^2_2,~~P(z_1,z_2)=z_1-z_{2}$ and $\Psi(z_1,z_2)=(p_{1}(z_1,z_2),p_{2}(z_1,z_2)).$ Then the graph of $\overline{P\circ \Psi}$ over $\Gamma_{{\mathfrak{D}_{2}}}$ is not polynomially convex. Hence polynomial approximation does not holds.
	\end{example}

	\noindent {\bf Explanation}:
	Following the notation in \Cref{T:Analogus_Jimbo}, let $\Psi(z)=(p_1(z),p_{2}(z)),$ $h(z)=\frac{1}{z_1}-\frac{1}{z_2}.$ 
	Therefore,
	\begin{align*}
		X&=\left\{z\in \overline{\mathfrak{D}_{2}}\setminus (L\cup \Gamma_{\mathfrak{D}_{2}}): \overline{(P\circ \Psi)(z)}=(h\circ\Psi)(z)\right\}\\
		&=\left\{z\in \overline{\mathfrak{D}_{2}}\setminus (L\cup \Gamma_{\mathfrak{D}_{2}}): \overline{z_1+2z^2_2}=\frac{1}{2z_1+z^2_2}+\frac{1}{z_1-z^2_2}\right\}.
	\end{align*}
	\noindent and 			
	\begin{align*}
		\triangle({z})&=
		\begin{vmatrix} 
			\frac{\partial P(\Psi)}{\partial{z_1}}& \frac{\partial P(\Psi)}{\partial z_2}\\[1.5ex]
			\frac{\partial h(\Psi)}{\partial{{z_1}}}& \frac{\partial h(\Psi)}{\partial{z_2}}\\[1.5ex]
		\end{vmatrix}
		\begin{vmatrix} 
			\frac{\partial p_1}{\partial{{z_1}}}& \frac{\partial p_1}{\partial{z_2}}\\[1.5ex]
			\frac{\partial p_2}{\partial{{z_1}}}& \frac{\partial p_2}{\partial{z_2}}\\[1.5ex]
		\end{vmatrix}_{z}
		=\begin{vmatrix}
			1 & -1\\
			\frac{-1}{p^{2}_{1}(z)} & \frac{1}{p^{2}_{2}(z)}
		\end{vmatrix}\begin{vmatrix}
			2 & 2z_2\\
			1 & -2z_2
		\end{vmatrix}\\
		&=\frac{1}{p^2_{1}p^2_{2}}(p_{1}+p_{2})(p_{1}-p_{2})(-6z_2).
	\end{align*}
	\noindent We define $q_1:=p_{1}-p_{2}=z_1+2z^2_2,~~q_{2}:=p_{1}+p_{2}=3z_1,~~q_3=-6z_2$ and $Z_{j}=\{q_{j}=0\}.$
	
	\noindent Clearly, $Q_{3}=Z_{3}\cap \Gamma_{{\mathfrak{D}_{2}}}=\emptyset$ and it is easy to show that if  $z\in Z_{2}\cap X,$ then $z\in \Gamma_{{\mathfrak{D}_{2}}},$ this is not possible because $X\cap \Gamma_{{\mathfrak{D}_{2}}}=\emptyset.$
	Now its remaining to verify for the set $Q_{1}=Z_{1}\cap \Gamma_{{\mathfrak{D}_{2}}}.$ It is easy to see that 
	$Z_{1}\cap \overline{\mathfrak{D}}_2\setminus (\Gamma_{{\mathfrak{D}_{2}}}\cup L)\subset X$ and $\widehat{Q}_{1}=\widehat{Z_{1}\cap \Gamma_{{\mathfrak{D}_{2}}}}=\{(z_1,z_2)\in\overline{\mathfrak{D}}_{2}: z_1+2z^2_2=0 \}.$ 
	Therefore, by \Cref{T:Analogus_Jimbo}, we get that	
	\begin{align*}
		\widehat{\Gr_{\overline{P\circ \Psi}}(\Gamma_{\mathfrak{D}_{2}})}=\Gr_{\overline{P\circ \Psi}}(\Gamma_{\mathfrak{D}_{2}})\cup \{(z_1,z_2,0):(z_1,z_2)\in \overline{\mathfrak{D}}_2, z_1+2z^2_2=0 \}.
	\end{align*}
	
	\smallskip
	
	\begin{example}
		Let $p_{1}(z_1,z_2)=2z_1+z^2_2,~~p_{2}(z_1,z_2)=z_1-z^2_2,~~P(z_1,z_2)=z_1+2z_{2}$ and $\Psi(z_1,z_2)=(p_{1}(z_1,z_2),p_{2}(z_1,z_2)).$ Then the graph of $\overline{P\circ \Psi}$ over $\Gamma_{{\mathfrak{D}_{2}}}$ is polynomially convex. 
	\end{example}	
	
	\smallskip
	
	\noindent {\bf Explanation}:
	Following the notations in \Cref{T:Analogus_Jimbo}, let $\Psi(z)=(p_1(z),p_{2}(z)),$ $h(z)=\frac{1}{z_1}+\frac{2}{z_2}.$
	Therefore,	
	\begin{align*}
		X&=\left\{z\in \overline{\mathfrak{D}_{2}}\setminus (L\cup \Gamma_{\mathfrak{D}_{2}}): \overline{(P\circ \Psi)(z)}=(h\circ\Psi)(z)\right\}\\
		&=\left\{z\in \overline{\mathfrak{D}_{2}}\setminus (L\cup \Gamma_{\mathfrak{D}_{2}}): \overline{4z_1-z^2_2}=\frac{1}{2z_1+z^2_2}+\frac{2}{z_1-z^2_2}\right\}.
	\end{align*} 	
	\noindent and	
	\begin{align*}
		\triangle({z})&=
		\begin{vmatrix} 
			\frac{\partial P(\Psi)}{\partial{z_1}}& \frac{\partial P(\Psi)}{\partial z_2}\\[1.5ex]
			\frac{\partial h(\Psi)}{\partial{{z_1}}}& \frac{\partial h(\Psi)}{\partial{z_2}}\\[1.5ex]
		\end{vmatrix}
		\begin{vmatrix} 
			\frac{\partial p_1}{\partial{{z_1}}}& \frac{\partial p_1}{\partial{z_2}}\\[1.5ex]
			\frac{\partial p_2}{\partial{{z_1}}}& \frac{\partial p_2}{\partial{z_2}}\\[1.5ex]
		\end{vmatrix}_{z}
		=\begin{vmatrix}
			1 & 2\\
			\frac{-1}{p^{2}_{1}(z)} & \frac{-2}{p^{2}_{2}(z)}
		\end{vmatrix}\begin{vmatrix}
			2 & 2z_2\\
			1 & -2z_2
		\end{vmatrix}\\
		&=\frac{1}{p^2_{1}p^2_{2}}(p_{1}+p_{2})(p_{1}-p_{2})(6z_2).
	\end{align*}
	\noindent We define $q_1:=p_{1}-p_{2}=z_1+2z^2_2,~~q_{2}:=p_{1}+p_{2}=3z_1,~~q_3=z_2$ and $Z_{j}=\{q_{j}=0\}.$ Clearly, $Q_{3}=Z_{3}\cap \Gamma_{{\mathfrak{D}_{2}}}=\emptyset$ and it is easy to show that if  $z\in Z_{j}\cap X,$ then $z\in \Gamma_{{\mathfrak{D}_{2}}},$ for $j=1,2.$ This is not possible because $X\cap \Gamma_{{\mathfrak{D}_{2}}}=\emptyset.$ Therefore, by \Cref{T:Analogus_Jimbo}, we get that	
	\begin{align*}
		\widehat{\Gr_{\overline{P\circ \Psi}}(\Gamma_{\mathfrak{D}_{2}})}=\Gr_{\overline{P\circ \Psi}}(\Gamma_{\mathfrak{D}_{2}}).
	\end{align*}
	Therefore, by using \Cref{T: Approx_Cont_Func}, we get $[z_1,z_2,\overline{(P\circ \Psi)}(z);\Gamma_{\mathfrak{D}_2}]=\smoo(\Gamma_{\mathfrak{D}_2}).$
	
	\medskip

		\noindent{\bf Acknowledgments.} The first named author was partially supported by a MATRICS Research Grant (MTR/2017/000974) of SERB, Dept. of Science and Technology, Govt. of India, for the beginning of this work and is supported partially by a Core Research Grant (CRG/2022/003560) of ANRF (formerly SERB), Govt. of India,  for the later part of the work. The second named author acknowledges partial support from an INSPIRE Fellowship (IF 160487) awarded by the DST, Govt. of India, for the early version of this work, and an IoE-IISc Postdoctoral Fellowship (R(HR)(IOE-PDF)(MA)(GMM)-114) for the latest version of this work. The second named author would also like to thank Amar Deep Sarkar for the fruitful discussions.


\end{document}